\newtheorem{theorem}{Theorem}[section]
\newtheorem{lemma}{Lemma}[section]
\newtheorem{corollary}{Corollary}[section]
\newtheorem{proposition}[theorem]{Proposition}
\newtheorem{definition}{Definition}[section]
\numberwithin{equation}{section}
\begin{document}

\title[Derivative for the intersection local time of fBMs]
{Derivative for the intersection local time of fractional Brownian
Motions${}^{*}$}

\footnote[0]{${}^{*}$The Project-sponsored by NSFC (11171062) and Innovation Program of Shanghai Municipal Education Commission(12ZZ063).}


\author[L. Yan]{Litan Yan}

\keywords{fractional Brownian motion, intersection local time, occupation formula, hybrid quadratic covariation, Malliavin calculus}

\subjclass[2000]{60G15, 60G18, 60F25}

\maketitle

\date{}

\begin{center}
{\footnotesize {\it {Department of Mathematics, College of Science, Donghua University\\
2999 North Renmin Rd., Songjiang, Shanghai 201620, P.R. China}}\\
E-mail : litanyan@hotmail.com}
\end{center}

\maketitle

\begin{abstract}
Let $B^{H_1}$ and $\tilde{B}^{H_2}$ be two independent fractional Brownian motions on ${\mathbb R}$ with respective indices $H_i\in (0,1)$ and $H_1\leq H_2$. In this paper, we consider their intersection local time $\ell_t(a)$. We show that $\ell_t(a)$ is differentiable in the spatial variable if $\frac1{H_1}+\frac1{H_2}>3$, and we introduce the so-called {\it hybrid quadratic covariation} $[f(B^{H_1}-\tilde{B}^{H_2}),B^{H_1}]^{(HC)}$. When $H_1<\frac12$, we construct a Banach space ${\mathscr H}$ of measurable functions such that the quadratic covariation exists in $L^2(\Omega)$ for all $f\in {\mathscr H}$, and the Bouleau-Yor type identity
$$
[f(B^{H_1}-\tilde{B}^{H_2}),B^{H_1}]^{(HC)}_t=-\int_{\mathbb R}f(a)\ell_t(da)
$$
holds. When $H_1\geq \frac12$, we show that the quadratic covariation exists also in $L^2(\Omega)$ and the above Bouleau-Yor type identity holds also for all H\"older functions $f$ of order $\nu>\frac{2H_1-1}{H_1}$.
\end{abstract}


\section{Introduction}
In the study of stochastic area integrals for Brownian motion
$B$, Rogers-Walsh~\cite{Rogers1,Rogers2} were led to analyze the
following functional
$$
A(t,x)=\int_0^t1_{[0,\infty)}(x-B_s)ds,\quad t\geq 0,\;x\in {\mathbb R}.
$$
By using the classical It\^o calculus they showed that the process $\{A(t,B_t),t\geq 0\}$ is not a semi-martingale. In fact, they showed that the process
\begin{align}\label{eq1.4}
A(t,B_t)-\int_0^t{\mathscr L}(s,{B_s})dB_s
\end{align}
has finite non-zero $4/3$--variation, where ${\mathscr
L}(t,x)=\int_0^t\delta(B_s-x)ds$ is the local time. Now, a natural idea is to consider the functional
$$
A(t,\widetilde{B}_t)=\int_0^t1_{[0,\infty)}(\widetilde{B}_t-B_s)ds,\quad t\geq 0,
$$
where $\widetilde{B}$ is Brownian motion of independent $B$.

On the other hand, by a formal application of It\^o's formula with respect to the Brownian motion and using
$$
\frac{d}{dx}1_{\{x\geq 0\}}=\delta(x),\qquad \frac{d^2}{dx^2}1_{\{x\geq 0\}}=\delta'(x)
$$
in the sense of Schwartz's distribution, Rosen~\cite{wode19} developed a new approach to the study of $A(t,B_t)$ as follows:
$$
A(t,B_t)-\int_0^t{\mathscr
L}(s,B_s)dB_s=t+\frac12\int_0^t\int_0^s\delta'(B_s-B_r)drds
$$
for all $t\geq 0$, and one can consider the process
$$
\alpha_t'(a):=-\int_0^t\int_0^s\delta'(B_s-B_r-a)drds, \quad t\geq 0,a\in {\mathbb R}
$$
which are called the {\em derivatives of self-intersection local time} (in short, DSLT) of Brownian motion. By using the idea, Yan et al.~\cite{Yan2} deduced the existence of process
$$
\beta'_t(a):=-\int_0^tds\int_0^s\delta'(B_{r}^{H}- B_{s}^{H}-a)dr, \quad t\geq 0,a\in {\mathbb R},
$$
which are called the DSLT of fractional Brownian motion (fBm) $B^H$. Moreover, Jung-Markowsky~\cite{Jung-Markowsky1,Jung-Markowsky2} considered some in-depth results for $\beta'_t(a)$. Motivated by these results, in this paper, as an extension we consider the so-called {\em derivatives of the intersection local time} (DILT) of fBms which is formally defined as follows
$$
\ell'_t(a):=-\int_0^tds\int_0^s\delta'(B_{r}^{H_1}- \tilde{B}_{s}^{H_2}-a)dr, \quad t\geq 0,a\in {\mathbb R},
$$
where $B^{H_1}$ and $\tilde{B}^{H_2}$ are two independent fBms with respective indices $H_i\in (0,1)$ on ${\mathbb R}$ and $H_1\leq H_2$.

This paper is organized as follows. In Section~\ref{sec2} we present some preliminaries for fBm. In Section~\ref{sec3} we find the exact result of the existence of the DILT $\ell'_t(0)$ and prove the H\"older continuity of the DILT $\ell'_t(a)$. As a corollary we have
$$
\ell'_t(a)=\frac{\partial}{\partial a}\ell_t(a),
$$
and the occupation formula
$$
\int_0^tds\int_0^sf'(B^{H_1}_r-\tilde{B}^{H_2}_s)dr=-\int_{\mathbb R}f(a)\ell'_t(a)da
$$
holds for any $f\in C^1({\mathbb R})$ and $t\in [0,T]$, provided either $\frac1{H_1}+\frac1{H_2}>3$. In Section~\ref{sec6} and~\ref{sec10}, we study the so-called {\it hybrid quadratic covariation}.
\begin{definition}
Let $0<H_1,H_2<1$ and let $f$ be a Borel function on ${\mathbb R}$ such that the following integral exists:
$$
J_\varepsilon(H_1,H_2,t,f):=\frac1{\varepsilon^{2H_1}}\int_0^tds\int_0^s \left\{\Delta_\varepsilon f(B^{H_1}_{r}-\tilde{B}^{H_2}_{s})\right\} \left(B^{H_1}_{r+\varepsilon}-B^{H_1}_r\right)dr
$$
for all $\varepsilon>0$, where
$$
\Delta_\varepsilon f(B^{H_1}_{r}-\tilde{B}^{H_2}_{s}):=f(B^{H_1}_{r+\varepsilon}-\tilde{B}^{H_2}_{s}) -f(B^{H_1}_{r}-\tilde{B}^{H_2}_{s}).
$$
The limit $\lim\limits_{\varepsilon\to 0}J_\varepsilon(H_1,H_2,t,f)$ is called the {\it hybrid quadratic covariation} (in short, HQC), provided the limit exists in $L^1(\Omega)$, denoted by $[f(B^{H_1}-\tilde{B}^{H_2}),B^{H_1}]^{(HC)}_t$.
\end{definition}
Clearly, we have
$$
[f(B^{H_1}-\tilde{B}^{H_2}),B^{H_1}]^{(HC)}_t= \int_0^tds\int_0^sf'(B^{H_1}_r-\tilde{B}^{H_2}_s)dr
$$
for all $0<H_1<1$ and $f\in C^1({\mathbb R})$. When $0<H_1<\frac12$, the HQC is considered in Section~\ref{sec6}. By considering the decomposition
\begin{equation}\label{sec1-eq1.007}
\begin{split}
J_\varepsilon(H_1,H_2,t,f)&=\frac1{\varepsilon^{2H_1}} \int_0^tds\int_0^s f(B^{H_1}_{r+\varepsilon}-\tilde{B}^{H_2}_s) \left(B^{H_1}_{r+\varepsilon}-B^{H_1}_r\right)dr\\
&\qquad\qquad-\frac1{\varepsilon^{2H_1}}\int_0^tds\int_0^s f(B^{H_1}_{r}-\tilde{B}^{H_2}_s) \left(B^{H_1}_{r+\varepsilon}-B^{H_1}_r\right)dr
\end{split}
\end{equation}
for all $\varepsilon>0$ and by estimating the two terms on the right hand side, respectively, we construct a Banach space ${\mathscr H}$ of measurable functions such that the HQC exists in $L^2(\Omega)$ for all $f\in {\mathscr H}$. Moreover, for all $f\in {\mathscr H}$ and $0<H_1<\frac12$ we show that the integral
$$
\int_{\mathbb R}f(a)\ell'_t(a)da
$$
is well-defined and the following Bouleau-Yor type identity holds:
\begin{equation}\label{sec1-eq1-100}
[f(B^{H_1}-\tilde{B}^{H_2}),B^{H_1}]^{(HC)}_t=-\int_{\mathbb R}f(a)\ell'_t(a)da\equiv-\int_{\mathbb R}f(a)\ell_t(da).
\end{equation}
When $H_1 \geq \frac12$, the HQC is considered in Section~\ref{sec10}. It is clear that the decomposition~\eqref{sec1-eq1.007} does not bring any information and we need a new idea for $H_1> \frac12$. In fact, for $f(x)=x$ we have
\begin{align*}
\frac1{\varepsilon^{2H_1}} \int_0^t ds\int_0^s
&E\left[(B^{H_1}_{r}-\tilde{B}^{H_2}_s)(B^{H_1}_{r+\varepsilon}-B^{H_1}_r) \right]dr\\
&=\frac1{\varepsilon^{2H_1}} \int_0^t ds\int_0^s
E\left[B^{H_1}_{r}(B^{H_1}_{r+\varepsilon}-B^{H_1}_r)\right]dr\\
&\longrightarrow \infty,
\end{align*}
as $\varepsilon\downarrow 0$. Thus, when $H_1\geq \frac12$, by estimating integrally the expression
$$
\frac1{\varepsilon^{2H_1}} \int_0^t ds\int_0^s\left\{\Delta_\varepsilon f(B^{H_1}_{r}-\tilde{B}^{H_2}_{s})\right\} \left(B^{H_1}_{r+\varepsilon}-B^{H_1}_r\right)dr,
$$
and by using the existence of the Young integral
$$
\int_{\mathbb R}f(a)\ell_t(da),
$$
we show that the HQC exists in $L^2(\Omega)$ and the Bouleau-Yor type identity~\eqref{sec1-eq1-100} holds for all H\"older functions $f$ of order $\nu>\frac{2H_1-1}{H_1}$. In Appendix we give the proofs of some basic estimates.

\section{Fractional Brownian motion}\label{sec2}
In this section, we briefly recall some basic results of fBm with $0<H<1$ and give some basic estimates. For more aspects on the material we refer to E. Al\'os {\em et al}~\cite{Nua1}, Biagini {\it et al}~\cite{BHOZ}, Cheridito-Nualart~\cite{Cheridito-Nualart}, Decreusefond-\"Ust\"unel~\cite{Dec}, Gradinaru {\em et al}~\cite{Grad1}, Hu~\cite{Hu3}, Mishura~\cite{Mishura2}, Nourdin~\cite{I. Nourdin}, Nualart~\cite{Nual} and references therein. parameters.

A zero mean Gaussian process $B^H=\{B_t^H, 0\leq t\leq T\}$ defined on $(\Omega, \mathcal{F}^H, P)$ is called the fBm with Hurst index $H\in (0,1)$ if $B^H_0=0$ and
$$
E\left[B^H_tB^H_s\right]=\frac{1}{2}\left[t^{2H}+s^{2H}-|t-s|^{2H}
\right]
$$
for $t,s\geq 0$. FBm $B^H$ admits the integral representation of the form
$$
B^H_t=\int_0^tK_H(t,s)dB_s,\qquad 0\leq t\leq T,
$$
where $B$ is a standard Brownian motion and the kernel $K_H(t,s)$
satisfies
$$
\frac{\partial K_H}{\partial t}(t,s)=\kappa_H\left(H-\frac12\right)
\left(\frac{s}{t}\right)^{\frac12-H}(t-s)^{H-\frac32}
$$
with a normalizing constant $\kappa_H>0$. Let $\mathcal H$ be the completion of the linear space ${\mathcal E}$ generated by the indicator functions ${1}_{[0,t]},
t\in [0,T]$ with respect to the inner product
$$
\langle {1}_{[0,s]},{1}_{[0,t]}
\rangle_{\mathcal H}=\frac{1}{2}\left[t^{2H}+s^{2H}-|t-s|^{2H}
\right].
$$
The application
$$
{\mathcal E}\ni \varphi\mapsto B^{H}(\varphi):=\int_0^T\varphi(s)dB^H_s
$$
is an isometry from ${\mathcal E}$ to the Gaussian space generated by
$B^{H}$ and it can be extended to ${\mathcal H}$. Denote by $\mathcal S$ the set of smooth functionals of the form
$$
F=f(B^{H}(\varphi_1),B^{H}(\varphi_2),\ldots,B^{H}(\varphi_n)),
$$
where $f\in C^{\infty}_b({\mathbb R}^n)$ ($f$ and all its
derivatives are bounded) and $\varphi_i\in {\mathcal H}$. The {\it
derivative operator} $D^{H}$ (the Malliavin derivative) of a
functional $F$ of the form above is defined as
$$
D^{H}F=\sum_{j=1}^n\frac{\partial f}{\partial
x_j}(B^{H}(\varphi_1),B^{H}(\varphi_2),
\ldots,B^{H}(\varphi_n))\varphi_j.
$$
The derivative operator $D^{H}$ is then a closable operator from
$L^2(\Omega)$ into $L^2(\Omega;{\mathcal H})$. We denote by
${\mathbb D}^{1,2}$ the closure of ${\mathcal S}$ with respect to
the norm
$$
\|F\|_{1,2}:=\sqrt{E|F|^2+E\|D^{H}F\|^2_{{\mathcal H}}}.
$$
The {\it divergence integral} $\delta^{H}$ is the adjoint of
derivative operator $D^{H}$. That is, we say that a random variable
$u$ in $L^2(\Omega;{\mathcal H})$ belongs to the domain of the
divergence operator $\delta^{H}$, denoted by ${\rm
{Dom}}(\delta^H)$, if
$$
E\left|\langle D^{H}F,u\rangle_{\mathcal H}\right|\leq
c\|F\|_{L^2(\Omega)}
$$
for every $F\in \mathcal S$. In this case $\delta^{H}(u)$ is defined by the duality relationship
\begin{equation}\label{sec2-eq2.1}
E\left[F\delta^{H}(u)\right]=E\langle D^{H}F,u\rangle_{\mathcal H}
\end{equation}
for any $u\in {\mathbb D}^{1,2}$. We have ${\mathbb D}^{1,2}\subset
{\rm {Dom}}(\delta^H)$. We will use the notation
$$
\delta^{H}(u)=\int_0^Tu_sdB^{H}_s
$$
to express the Skorohod integral of a process $u$, and the
indefinite Skorohod integral is defined as $\int_0^tu_sdB^{H}_s=\delta^H(u{1}_{[0,t]})$. We can localize the domains of the operators $D^H$ and $\delta^H$. If $\mathbb{L}$ is a class of random variables (or processes) we denote by $\mathbb{L}_{\rm loc}$ the set of random variables $F$ such that there exists a sequence $\{(\Omega_n, F^n), n\geq 1\}\subset {\mathscr F}^H\times \mathbb{L}$ with the following properties:

\hfill

(i) $\Omega_n\uparrow \Omega$, a.s.

(ii) $F=F^n$ a.s. on $\Omega_n$.

\hfill\\
If $F\in \mathbb{D}^{1,2}_{\rm loc}$, and $(\Omega_n, F^n)$
localizes $F$ in $\mathbb{D}^{1,2}$, then $D^HF$ is defined without
ambiguity by $D^HF=D^HF^n$ on $\Omega_n$, $n\geq 1$. Then, if $u\in
\mathbb{D}^{1,2}_{\rm loc}$, the divergence $\delta^H(u)$ is defined
as a random variable determined by the conditions
$$
\delta^H(u)|_{\Omega_n}=\delta^H(u^n)|_{\Omega_n}\qquad {\rm {
for\;\; all\;\;}} n\geq 1,
$$
where $(\Omega_n, u^n)$ is a localizing sequence for $u$, but it may depend on the localizing sequence.


\section{Existence and H\"older continuity of the DILT of fBms}\label{sec3}
In this section we will consider the existence and continuity of the DILT of fBms. Let $B^{H_{i}}=\left\{B^{H_{i}}_t,\, t\geq 0 \right\}$, $i=1,2$ be two independent fractional Brownian motions with respective indices $H_i\in (0,1)$and $H_1\leq H_2$. The intersection local time, denoted by $\ell_t(a)$, is formally defined by
$$
\ell_t(a)=\int_0^t ds\int_0^s\delta(B_r^{H_{1}}- \tilde{B}_{s}^{H_{2}}-a)dr, \qquad t\geq 0,\;a\in {\mathbb R},
$$
where $\delta$ denotes the Dirac delta function. Nualart and Ortiz-Latorre~\cite{Nualart and S. Ortiz-Latorre} has showed the random variables $\ell_t(0)$, $t\geq 0$ exist in $L^2$ (see also, Chen-Yan~\cite{Chen-Yan}, Jiang-Wang~\cite{J-W} and Wu-Xiao~\cite{Wu-Xiao}). By approximating the Dirac delta function by the heat kernel
\begin{equation}\label{sec3-eq3.2}
p_{\varepsilon}(x)=\frac{1}{\sqrt{2\pi\varepsilon}}e^{
-\frac{x^{2}}{2\varepsilon}}
\equiv\frac{1}{2\pi}\int_{\mathbb{R}}e^{ix\xi}e^{-\varepsilon
\frac{\xi^{2}}{2}}d\xi.
\end{equation}
with $\varepsilon>0$, one can define $\ell_t(a):=\lim_{\varepsilon\to 0}\ell_{\varepsilon,t}(a)$ in $L^2(\Omega)$, where
$$
\ell_{\varepsilon,t}(a)=\int_0^t ds\int_0^s p_{\varepsilon}(B_r^{H_{1}}-\tilde{B}_{s}^{H_{2}}-a)dr.
$$
Denote
\begin{equation}\label{sec3-eq3.3}
\begin{split}
\ell_{\varepsilon,t}'(a)&=-\int_0^t ds\int_0^s p'_{\varepsilon}(B_r^{H_{1}}-\tilde{B}_{s}^{H_{2}}-a)dr\\
&=-\frac{i}{2\pi}\int_0^t ds\int_0^s dr\int_{\mathbb{R}}\xi
e^{i\xi(B_r^{H_{1}}-\tilde{B}_{s}^{H_{2}}-a)}\cdot e^{-\varepsilon
\frac{\xi^{2}}{2}}d\xi.
\end{split}
\end{equation}
The process
$$
\ell'_t(a):=\lim_{\varepsilon\to 0}\ell'_{\varepsilon,t}(a)
$$
is called the {\em derivatives of the intersection local time} (DILT) of fBms, provided the limit exists in $L^1(\Omega)$. We first obtain the exact result of the existence.

For simplicity we assume that $C$ stands for a positive constant depending only on some determinate parameters, and moreover, the notation $F\asymp G$ means that the ratio $F(x)/G(x)$ is bounded from below and above by positive constants that do not depend on $x$ in the common domain of definition for $F$ and $G$. Denote
\begin{align*}
&\lambda_{r,s}:={\rm Var}(B^{H_1}_r-\tilde{B}^{H_2}_s)
=r^{2H_1}+s^{2H_2},\\
&\mu_{\varepsilon_1,\varepsilon_2}: =E\left[(B^{H_1}_{r+\varepsilon_1}-\tilde{B}^{H_2}_s) (B^{H_1}_{r'+\varepsilon_2}-\tilde{B}^{H_2}_{s'})\right]\\
&\rho^2_{\varepsilon_1,\varepsilon_2} =\lambda_{r+\varepsilon_1,s}\lambda_{r'+\varepsilon_2,s'} -\mu^2_{\varepsilon_1,\varepsilon_2}.
\end{align*}
for $s>r>0,s'>r'>0$ and $\varepsilon_1,\varepsilon_2>0$. Set $\mu:=\mu_{0.0}$ and $\rho:=\rho_{0,0}$. The next lemma will proved in Appendix~\ref{app2}
\begin{lemma}\label{lem2.5}
For all $s>r>0$ and $s'>r'>0$, we have
\begin{equation}\label{sec3-eq3.8-00}
\rho^2=\lambda_{r,s}\lambda_{r',s'} -\mu^2\asymp \left((r\wedge r')^{2H_{1}}+(s\wedge s')^{2H_{2}}\right)
\left(|r-r'|^{2H_1}+|s-s'|^{2H_2}\right).
\end{equation}
\end{lemma}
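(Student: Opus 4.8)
The plan is to compute $\rho^2=\lambda_{r,s}\lambda_{r',s'}-\mu^2$ explicitly and then sandwich it between the two sides of the claimed equivalence. Since $B^{H_1}$ and $\tilde B^{H_2}$ are independent, the covariance $\mu=E[(B^{H_1}_r-\tilde B^{H_2}_s)(B^{H_1}_{r'}-\tilde B^{H_2}_{s'})]$ splits as $\mu=\mu^{(1)}+\mu^{(2)}$ with
$$
\mu^{(1)}=\tfrac12\bigl(r^{2H_1}+r'^{2H_1}-|r-r'|^{2H_1}\bigr),\qquad
\mu^{(2)}=\tfrac12\bigl(s^{2H_2}+s'^{2H_2}-|s-s'|^{2H_2}\bigr).
$$
Writing $\lambda_{r,s}=r^{2H_1}+s^{2H_2}$ and similarly for $\lambda_{r',s'}$, and expanding $\lambda_{r,s}\lambda_{r',s'}-(\mu^{(1)}+\mu^{(2)})^2$, I would group the terms into an $H_1$-part, an $H_2$-part, and a mixed part. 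The key algebraic observation is the elementary identity, valid for any Hurst index $H$ and any $0<u\le v$,
$$
u^{2H}v^{2H}-\Bigl(\tfrac12\bigl(u^{2H}+v^{2H}-(v-u)^{2H}\bigr)\Bigr)^2
=\tfrac14\bigl((v-u)^{2H}-(v^{2H}-u^{2H})\bigr)\bigl((v-u)^{2H}+(v^{2H}-u^{2H})\bigr)+u^{2H}(v-u)^{2H},
$$
or, more usefully, the two-sided bound $u^{2H}v^{2H}-(\mu\text{-term})^2\asymp (u\wedge v)^{2H}\,|u-v|^{2H}$ which follows from the variance of a single fBm being nonnegative together with a direct estimate. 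Applying this with $(u,v)=(r\wedge r',r\vee r')$ for $H_1$ and $(s\wedge s',s\vee s')$ for $H_2$ will produce the product structure on the right-hand side of \eqref{sec3-eq3.8-00}; the cross terms $r^{2H_1}s'^{2H_2}+r'^{2H_1}s^{2H_2}-2\mu^{(1)}\mu^{(2)}$ must then be shown to be nonnegative and dominated by (a constant times) the claimed product, which is where most of the bookkeeping lies.

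The cleaner route, which I would actually pursue, is to avoid brute expansion and instead use the determinant interpretation: $\rho^2$ is the determinant of the $2\times2$ covariance matrix of the Gaussian vector $(X,X')$ with $X=B^{H_1}_r-\tilde B^{H_2}_s$, $X'=B^{H_1}_{r'}-\tilde B^{H_2}_{s'}$, hence
$$
\rho^2=\operatorname{Var}(X)\operatorname{Var}(X')-\operatorname{Cov}(X,X')^2
=\operatorname{Var}(X')\cdot\operatorname{Var}\bigl(X-\tfrac{\operatorname{Cov}(X,X')}{\operatorname{Var}(X')}X'\bigr),
$$
so $\rho^2\ge0$ automatically, and by symmetry one also has the companion formula with the roles of $X,X'$ swapped. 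Expanding $X-X'=(B^{H_1}_r-B^{H_1}_{r'})-(\tilde B^{H_2}_s-\tilde B^{H_2}_{s'})$ gives $\operatorname{Var}(X-X')=|r-r'|^{2H_1}+|s-s'|^{2H_2}$, and combining the conditional-variance representation with the crude bounds $\operatorname{Var}(X')\le \lambda_{r',s'}$ and a lower bound obtained by projecting onto the increment directions yields the lower estimate $\rho^2\gtrsim (\lambda_{r\wedge r',s\wedge s'})\,(|r-r'|^{2H_1}+|s-s'|^{2H_2})$. For the matching upper bound one uses $\rho^2\le \operatorname{Var}(X)\operatorname{Var}(X-X')$ (Cauchy--Schwarz applied after the splitting $X = X' + (X-X')$, i.e. $\operatorname{Cov}(X,X')\ge \operatorname{Var}(X')-\tfrac12\operatorname{Var}(X-X')$ type manipulations), together with $\operatorname{Var}(X)\le \lambda_{r\vee r',s\vee s'}\lesssim \lambda_{r\wedge r',s\wedge s'}$ when the points are comparable; the genuinely delicate regime is when, say, $r'\ll r$, where $\operatorname{Var}(X)$ and $\operatorname{Var}(X-X')$ are both of order $r^{2H_1}+s^{2H_2}$ and one must extract the extra smallness of $(r\wedge r')^{2H_1}=r'^{2H_1}$ by a more careful two-term expansion rather than Cauchy--Schwarz.

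I expect the main obstacle to be exactly this last point: the naive Cauchy--Schwarz bound $\rho^2\le \operatorname{Var}(X)\operatorname{Var}(X-X')$ is off by the wrong factor when one of the four time points is much smaller than its partner, so one cannot prove the upper half of the $\asymp$ in \eqref{sec3-eq3.8-00} by soft arguments alone. The fix is to carry the exact expansion of $\rho^2$ in terms of the six monomials $r^{2H_1},r'^{2H_1},|r-r'|^{2H_1}$ and $s^{2H_2},s'^{2H_2},|s-s'|^{2H_2}$, use the scalar inequality $0\le |u-v|^{2H}\le u^{2H}+v^{2H}$ and the concavity/subadditivity estimate $|v^{2H}-u^{2H}|\le |v-u|^{2H}$ (for $H\le\tfrac12$) or $\le 2H v^{2H-1}(v-u)\le 2H|v-u|^{2H}(v/(v-u))^{2H-1}$ handled case-wise, to collapse the expansion into the stated product. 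Once the scalar lemma ``$u^{2H}v^{2H}-(\tfrac12(u^{2H}+v^{2H}-(v-u)^{2H}))^2\asymp (u\wedge v)^{2H}(v-u)^{2H}$'' is established for a single index (a short one-variable calculus exercise, splitting at $v\le 2u$ versus $v>2u$), the full lemma follows by applying it twice and checking the mixed terms are absorbed, so the write-up reduces to that scalar estimate plus routine algebra.
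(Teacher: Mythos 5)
Your skeleton is the same as the paper's: after the substitution $r'=xr$, $s'=ys$ the paper writes $4\rho^2=r^{4H_1}f_{H_1}(x)+r^{2H_1}s^{2H_2}g(x,y)+s^{4H_2}f_{H_2}(y)$ with $f_H(x)=4x^{2H}-\bigl(1+x^{2H}-(1-x)^{2H}\bigr)^2$ and $g(x,y)=4(x^{2H_1}+y^{2H_2})-2\bigl(1+x^{2H_1}-(1-x)^{2H_1}\bigr)\bigl(1+y^{2H_2}-(1-y)^{2H_2}\bigr)$, and your scalar lemma is exactly the diagonal estimate $f_H(x)\asymp x^{2H}(1-x)^{2H}$ (quoted there from Yan et al./Chen--Yan). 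The genuine gap is in your treatment of the mixed part. Your plan is to show the cross bracket $r^{2H_1}(s')^{2H_2}+(r')^{2H_1}s^{2H_2}-2\mu^{(1)}\mu^{(2)}$ is nonnegative and \emph{dominated} by the claimed product, and you then assert the lemma reduces to the scalar estimate plus routine algebra. That is not sufficient: nonnegativity plus an upper bound only gives $\rho^2\gtrsim (r\wedge r')^{2H_1}|r-r'|^{2H_1}+(s\wedge s')^{2H_2}|s-s'|^{2H_2}$, while the stated equivalence also demands the cross products $(r\wedge r')^{2H_1}|s-s'|^{2H_2}+(s\wedge s')^{2H_2}|r-r'|^{2H_1}$ in the \emph{lower} bound, and in some regimes these are carried entirely by the mixed bracket: take $s'=s$ and let $r'\downarrow 0$, so both diagonal brackets tend to $0$ while $\rho^2\to r^{2H_1}s^{2H_2}$, which is precisely the cross term. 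So you must prove a matching lower bound for the mixed bracket, i.e.\ the paper's two-variable estimate $g(x,y)\asymp x^{2H_1}(1-y)^{2H_2}+y^{2H_2}(1-x)^{2H_1}$, which your proposal never addresses.

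That estimate is the real content of the lemma and does not follow from the one-index lemma by bookkeeping. For $\min(H_1,H_2)\le\tfrac12$ it can be handled by subadditivity, but when both $H_i>\tfrac12$ the defect $1-x^{2H_1}-(1-x)^{2H_1}$ is positive and one must show the product of the two defects is dominated by $2\bigl[x^{2H_1}(1-y)^{2H_2}+y^{2H_2}(1-x)^{2H_1}\bigr]$ with strict room to spare; the argument behind the paper's estimate uses the inequality $1-u^{2H}\le(1-u)^{2H}+2(2^{H}-1)u^{H}(1-u)^{H}$ and, crucially, the positivity of $1-(2^{H_1}-1)(2^{H_2}-1)$. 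Nothing of this kind appears in your write-up, and your ``cleaner'' conditional-variance/Cauchy--Schwarz route cannot supply it either, as you yourself concede. Two smaller points: the displayed ``elementary identity'' in your first paragraph is not an identity (the two sides differ by $\tfrac12(v-u)^{2H}\bigl(v^{2H}-u^{2H}-(v-u)^{2H}\bigr)$), though you do not ultimately rely on it; and the nonnegativity of the mixed bracket, which you do flag, is the easy part (it follows from $\mu^{(1)}\le r^{H_1}(r')^{H_1}$, $\mu^{(2)}\le s^{H_2}(s')^{H_2}$ and AM--GM), not where the difficulty lies.
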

\begin{theorem}\label{th3.1}
For every $t>0$, $\ell_{\varepsilon,t}'(0)$ converges in $L^{2}(\Omega)$, as $\varepsilon$ tends to $0$ if $\frac1{H_1}+\frac1{H_2}>3$.
\end{theorem}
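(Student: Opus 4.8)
The plan is to show that $\{\ell'_{\varepsilon,t}(0)\}_{\varepsilon>0}$ is a Cauchy family in $L^2(\Omega)$ by a direct second moment calculation from the Fourier representation~\eqref{sec3-eq3.3}. Fix $\varepsilon,\delta>0$. Since $\ell'_{\varepsilon,t}(0)$ is real-valued, expanding the product of two copies of~\eqref{sec3-eq3.3}, applying Fubini's theorem (legitimate because the $\varepsilon$-regularized integrand is absolutely integrable), and using the independence of $B^{H_1}$ and $\tilde B^{H_2}$ together with the Gaussian characteristic function, one obtains
$$
E\!\left[\ell'_{\varepsilon,t}(0)\,\ell'_{\delta,t}(0)\right]
=\frac{1}{2\pi}\int_0^t\!ds\int_0^t\!ds'\int_0^s\!dr\int_0^{s'}\!dr'\;
\frac{\mu}{\bigl((\lambda_{r,s}+\varepsilon)(\lambda_{r',s'}+\delta)-\mu^2\bigr)^{3/2}},
$$
where I used the elementary identity $\int_{{\mathbb R}^2}\xi\eta\exp\bigl(-\tfrac12(a\xi^2+b\eta^2+2c\xi\eta)\bigr)\,d\xi\,d\eta=-2\pi c\,(ab-c^2)^{-3/2}$ together with $\mu=E[B^{H_1}_rB^{H_1}_{r'}]+E[\tilde B^{H_2}_s\tilde B^{H_2}_{s'}]\ge0$ and $\mathrm{Var}(B^{H_1}_r-\tilde B^{H_2}_s)=\lambda_{r,s}$. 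Note that $0\le\mu\le\sqrt{\lambda_{r,s}\lambda_{r',s'}}\le t^{2H_1}+t^{2H_2}=:C_t$ and $(\lambda_{r,s}+\varepsilon)(\lambda_{r',s'}+\delta)-\mu^2\ge\lambda_{r,s}\lambda_{r',s'}-\mu^2=\rho^2$.

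Assume for the moment that
\begin{equation}\label{sec3-Lambda}
\Lambda_t:=\int_0^t\!ds\int_0^t\!ds'\int_0^s\!dr\int_0^{s'}\!dr'\;\frac{1}{\rho^3}<\infty.
\end{equation}
Then the integrand in the formula for $E[\ell'_{\varepsilon,t}(0)\ell'_{\delta,t}(0)]$ is dominated by $C_t\,\rho^{-3}$ uniformly in $\varepsilon,\delta$, and, by Lemma~\ref{lem2.5}, $\rho>0$ for a.e.\ $(r,r',s,s')$, so the integrand converges pointwise a.e.\ to $\mu\rho^{-3}$ as $\varepsilon,\delta\downarrow0$. Dominated convergence then gives $E[\ell'_{\varepsilon,t}(0)\ell'_{\delta,t}(0)]\to L:=\frac1{2\pi}\int_0^t ds\int_0^t ds'\int_0^s dr\int_0^{s'}dr'\,\mu\rho^{-3}<\infty$, irrespective of the manner in which $\varepsilon,\delta\to0$. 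Hence
$$
E\bigl[(\ell'_{\varepsilon,t}(0)-\ell'_{\delta,t}(0))^2\bigr]
=E[(\ell'_{\varepsilon,t}(0))^2]-2E[\ell'_{\varepsilon,t}(0)\ell'_{\delta,t}(0)]+E[(\ell'_{\delta,t}(0))^2]
\longrightarrow L-2L+L=0
$$
as $\varepsilon,\delta\downarrow0$, so $\{\ell'_{\varepsilon,t}(0)\}$ is Cauchy in $L^2(\Omega)$ and therefore converges.

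It remains to verify~\eqref{sec3-Lambda}, and this is where the hypothesis $\frac1{H_1}+\frac1{H_2}>3$ enters. By Lemma~\ref{lem2.5}, $\rho^{-3}$ is comparable to the $(-3/2)$-th power of $\bigl((r\wedge r')^{2H_1}+(s\wedge s')^{2H_2}\bigr)\bigl(|r-r'|^{2H_1}+|s-s'|^{2H_2}\bigr)$. By the weighted arithmetic--geometric mean inequality, for any $\alpha,\beta\in[0,1]$,
$$
(r\wedge r')^{2H_1}+(s\wedge s')^{2H_2}\gtrsim(r\wedge r')^{2H_1\alpha}(s\wedge s')^{2H_2(1-\alpha)},\qquad
|r-r'|^{2H_1}+|s-s'|^{2H_2}\gtrsim|r-r'|^{2H_1\beta}|s-s'|^{2H_2(1-\beta)}.
$$
Splitting the domain of integration into the finitely many pieces determined by the orderings of $r,r'$ and of $s,s'$ and, in each piece, passing to the difference variables $u=|r-r'|$, $v=|s-s'|$ as new coordinates (Jacobian $1$), $\Lambda_t$ is bounded by a constant multiple of the product of the four one-dimensional integrals $\int_0^tx^{-3H_1\alpha}dx$, $\int_0^tx^{-3H_1\beta}dx$, $\int_0^tx^{-3H_2(1-\alpha)}dx$, $\int_0^tx^{-3H_2(1-\beta)}dx$. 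All four converge as soon as $3H_1\alpha<1$, $3H_1\beta<1$, $3H_2(1-\alpha)<1$, $3H_2(1-\beta)<1$; such an $\alpha$ (and, taking $\beta=\alpha$, such a $\beta$) exists in $[0,1]$ precisely when $1-\frac1{3H_2}<\frac1{3H_1}$, i.e.\ $\frac1{H_1}+\frac1{H_2}>3$. This proves~\eqref{sec3-Lambda} and completes the argument.

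The main obstacle is~\eqref{sec3-Lambda}: the reduction of the $L^2$-convergence to this deterministic integrability statement is routine, but the singularity of $\rho^{-3}$ is genuinely four-dimensional and coupled across the two time pairs, so one needs both the sharp two-factor equivalence of Lemma~\ref{lem2.5} and the AM--GM decoupling to replace it by independent one-dimensional power singularities; the balancing of the two exponents through the free parameter $\alpha$ is exactly what yields the threshold $\frac1{H_1}+\frac1{H_2}>3$.
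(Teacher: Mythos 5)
Your proposal is correct and takes essentially the same route as the paper: the heat-kernel/Fourier representation and the Gaussian identity produce the kernel $\mu\bigl((\lambda_{r,s}+\varepsilon)(\lambda_{r',s'}+\delta)-\mu^2\bigr)^{-3/2}$, Lemma~\ref{lem2.5} combined with Young's inequality (your AM--GM step, the paper's choice $\alpha=\frac{H_2}{H_1+H_2}$) gives integrability of $\mu\rho^{-3}$ precisely under $\frac1{H_1}+\frac1{H_2}>3$, and dominated convergence yields the Cauchy property. The only difference is cosmetic: you apply dominated convergence to the mixed moments $E[\ell'_{\varepsilon,t}(0)\ell'_{\delta,t}(0)]$ after integrating out $\xi,\eta$, whereas the paper applies it directly to the Fourier-space expression for $E\bigl|\ell'_{\varepsilon,t}(0)-\ell'_{\delta,t}(0)\bigr|^{2}$.
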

\begin{proof}
Denote ${\mathbb T}=\{0<r<s<t,\;0<r'<s'<t\}$ for any $t>0$. Then we have
\begin{align*}
E\ell_{\varepsilon,t}'(0)&=-\frac{i}{2\pi}\int_0^t ds\int_0^sdr \int_{\mathbb R}{\xi} Ee^{i\xi(B^{H_1}_r-\tilde{B}^{H_2}_s)}e^{-\varepsilon \xi^2/2}d\xi=0
\end{align*}
and
\begin{align*}
E\left[\ell_{\varepsilon,t}'(0)^2\right]&=\frac{-1}{(2\pi)^2}
\int_{\mathbb T} drdsdr'ds'\int_{{\mathbb R}^2}{\xi\eta}e^{-\frac{\varepsilon}2(\xi^2+\eta^2)}\\
&\qquad\qquad\cdot
E\exp\left(i(B^{H_1}_r-\tilde{B}^{H_2}_s)\xi+i(B^{H_1}_{r'}-\tilde{B}^{H_2}_{s'}) \eta\right)d\xi d\eta\\
&=\frac{-1}{(2\pi)^2} \int_{\mathbb T} drdsdr'ds'\int_{{\mathbb R}^2}{\xi\eta}\\
&\qquad\qquad\cdot
\exp\left(-\frac12\left[(\lambda_{r,s}+\varepsilon)\xi^2 +2\mu\xi\eta+(\lambda_{r',s'}+\varepsilon)\eta^2\right]\right)d\xi d\eta\\
&=\frac{1}{2\pi}\int_{\mathbb T}\frac{\mu drdsdr'ds' }{\left((\lambda_{r,s}+\varepsilon)(\lambda_{r',s'} +\varepsilon)-\mu^2\right)^{3/2}}
\end{align*}
for all $\varepsilon>0$, which deduce that $\ell_{\varepsilon,t}'(0)\in L^{2}(\Omega)$ if and only if
$$
\int_{\mathbb T}\frac{\mu drdsdr'ds'}{\left(\lambda_{r,s}\lambda_{r',s'} -\mu^2\right)^{3/2}}<\infty
$$
for all $t\geq 0$.

On the other hand, by Lemma~\ref{lem2.5} and Young's inequality, we get
\begin{equation}
\lambda_{r,s}\lambda_{r',s'}-\mu^2\geq C(r\wedge r')^{2\alpha H_{1}}(s\wedge s')^{2(1-\alpha)H_{2}}
|r-r'|^{2\alpha H_1}|s-s'|^{2(1-\alpha)H_2}
\end{equation}
for all $0\leq \alpha\leq 1$. Since $\frac1{H_1}+\frac1{H_2}>3$ we can take
$$
\alpha=\frac{H_2}{H_1+H_2}
$$
so that $3\alpha H_1=3(1-\alpha)H_2<1$. This proves
\begin{align*}
\int_{{\mathbb T}}\frac{|\mu| drdsdr'ds'}{\left(\lambda_{r,s}\lambda_{r',s'} -\mu^2\right)^{3/2}}<
\int_{{\mathbb T}}\frac{|\mu| drdsdr'ds'}{[(r\wedge r')(s\wedge s')
|r-r'||s-s'|]^\theta}<\infty
\end{align*}
with $\theta=\frac{3H_1H_2}{H_1+H_2}$ if $\frac1{H_1}+\frac1{H_2}>3$.

Finally, we claim that the sequence
$\{\ell_{\varepsilon,t}'(0),\varepsilon>0\}$ is of Cauchy in
$L^{2}(\Omega)$. For any $\delta, \varepsilon>0$ we have
\begin{equation*}
\begin{split}
E(|\ell_{\varepsilon,t}'(0)&-\ell_{\delta,t}'(0)|^{2})
=\frac{1}{4\pi^{2}}\int_{\mathbb T}drdsdr'ds'\int_{\mathbb{R}^{2}} \xi\eta
E\exp\left\{i\xi(B^{H_{1}}_r-\tilde{B}^{H_2}_s)
+i\eta(B^{H_1}_{r'}-\tilde{B}^{H_2}_{s'})\right\}\\
&\hspace{2cm}\cdot\left(e^{-\frac\varepsilon2\xi^2}-e^{
-\frac\delta2\xi^2}\right)
\left(e^{-\frac\varepsilon2\eta^2}-e^{-\frac\delta2\eta^2}\right)
d\xi d\eta drds\\
&=\frac{1}{4\pi^{2}}\int_{\mathbb T}drdsdr'ds'\int_{\mathbb{R}^{2}}
e^{-\frac{1}{2}(\lambda_{r,s}\xi^2+2\mu\xi\eta+\lambda_{r',s'}\eta^2)} \left(1-e^{-\frac{|\varepsilon-
\delta|^{2}|\xi|^{2}}{2}}\right)\\
&\hspace{2cm}\cdot\left(1-e^{-\frac{|\varepsilon-\delta|^{2}|\eta|^{2}}{2}}
\right) e^{-\frac{\varepsilon\wedge\delta}{2}(\xi^{2}+\eta^{2})}d\xi
d\eta.
\end{split}
\end{equation*}
Thus, dominated convergence theorem yields
$$
E(|\ell_{\varepsilon,t}'(0)-\ell_{\delta,t}'(0)|^{2})\longrightarrow 0
$$
as $\varepsilon\to 0$ and $\delta\to 0$, which leads to $
\{\ell_{\varepsilon,t}'(0),\;\varepsilon>0\}$ is a Cauchy sequence in $L^{2}(\Omega,\mathscr{F},P)$. Consequently, $\lim_{\varepsilon\to
0}\ell_{\varepsilon,t}'(0)$ exists in $L^{2}(\Omega, \mathscr{F},P)$. This completes the proof.
\end{proof}

\begin{corollary}
If $0<H_1<\frac1{3\alpha}$ and $0<H_2<\frac1{3(1-\alpha)}$ with $0<\alpha<1$, $\ell_{\varepsilon,t}'(0)$ converges in $L^{2}(\Omega)$ for every $t>0$, as $\varepsilon$ tends to $0$. In particular, if either $H_1<\frac12$ or $H_2<\frac23$, $\ell_{\varepsilon,t}'(0)$ converges in $L^{2}(\Omega)$ for every $t>0$, as $\varepsilon$ tends to $0$.
\end{corollary}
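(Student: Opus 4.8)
The plan is to reduce everything to Theorem~\ref{th3.1} by checking that the stated hypotheses force the inequality $\frac1{H_1}+\frac1{H_2}>3$. For the first assertion, I would start from $0<H_1<\frac1{3\alpha}$ and $0<H_2<\frac1{3(1-\alpha)}$ with $0<\alpha<1$; inverting these gives $\frac1{H_1}>3\alpha$ and $\frac1{H_2}>3(1-\alpha)$, and adding the two bounds cancels the dependence on $\alpha$:
$$
\frac1{H_1}+\frac1{H_2}>3\alpha+3(1-\alpha)=3.
$$
Hence Theorem~\ref{th3.1} applies directly and $\ell'_{\varepsilon,t}(0)$ converges in $L^2(\Omega)$ for every $t>0$ as $\varepsilon\downarrow 0$. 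In fact this shows the two formulations are equivalent: conversely, if $\frac1{H_1}+\frac1{H_2}>3$ one may pick $\alpha=\frac{H_2}{H_1+H_2}$, exactly as in the proof of Theorem~\ref{th3.1}, so that the pair $(H_1,H_2)$ satisfies the hypothesis of the corollary.

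For the \emph{in particular} part I would exhibit an admissible $\alpha$ in each of the two cases, using the standing assumption $H_1\le H_2<1$. If $H_1<\frac12$, take $\alpha=\frac23$: then $\frac1{3\alpha}=\frac12>H_1$ and $\frac1{3(1-\alpha)}=1>H_2$, so the hypothesis of the first part holds. If instead $H_2<\frac23$, take $\alpha=\frac12$: then $\frac1{3\alpha}=\frac1{3(1-\alpha)}=\frac23$, and since $H_1\le H_2<\frac23$ both constraints $H_1<\frac23$ and $H_2<\frac23$ are satisfied. In either case the first part yields the desired convergence. Equivalently, one checks directly that $H_1<\frac12$ together with $H_2<1$ gives $\frac1{H_1}+\frac1{H_2}>2+1=3$, while $H_2<\frac23$ together with $H_1\le H_2$ gives $\frac1{H_1}+\frac1{H_2}>\frac32+\frac32=3$, and then invokes Theorem~\ref{th3.1}.

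Since every step is an elementary manipulation of inequalities, there is no genuine obstacle here; the only point that requires a little care is to keep the standing hypothesis $H_1\le H_2$ in play, as it is precisely what makes the case $H_2<\frac23$ work (without it one would only obtain $\frac1{H_1}+\frac1{H_2}>\frac52$, which is insufficient). No Malliavin-calculus input or analytic estimate beyond Theorem~\ref{th3.1} is needed.
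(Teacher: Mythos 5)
Your proposal is correct and matches the paper's (implicit) argument: the corollary is stated without proof precisely because the hypotheses $H_1<\frac1{3\alpha}$, $H_2<\frac1{3(1-\alpha)}$ add up to $\frac1{H_1}+\frac1{H_2}>3\alpha+3(1-\alpha)=3$, so Theorem~\ref{th3.1} (equivalently, the Young-inequality bound with the given $\alpha$ inside its proof) applies directly. Your verification of the two special cases, including the observation that the standing assumption $H_1\leq H_2$ is exactly what makes the case $H_2<\frac23$ work, is accurate.
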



At the end of this section, we consider the H\"older continuity and the
occupation-time formula for the DILT $\{\ell^{'}_t(a); t\geq 0,a\in {\mathbb R}\}$. Our main object is to explain and prove the following theorem.

\begin{theorem}\label{th5.1}
Let $0<H_1\leq H_2<1$. If $\frac1{H_1}+\frac1{H_2}>3$, then the processes $\ell_{\varepsilon,t}'(a)$ converges almost surely, and in $L^p(\Omega)$ for all $p\in (0,\infty)$, as $\varepsilon$ tends to zero. Moreover, the process $\ell'_t(a)$ has a modification which is a.s. jointly H\"older continuous in $(a,t)$.
\end{theorem}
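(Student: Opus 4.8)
The plan is to control high-order moments of the increments of $\ell'_{\varepsilon,t}(a)$ in the variables $a$ and $t$, uniformly in $\varepsilon$, and then to invoke Kolmogorov's continuity criterion together with the $L^2$-convergence furnished by Theorem~\ref{th3.1}. Fix an even integer $m=2n$. Using the Fourier representation~\eqref{sec3-eq3.3} one writes
$$
\ell'_{\varepsilon,t}(a)-\ell'_{\varepsilon,t}(b)=-\frac{i}{2\pi}\int_0^t ds\int_0^s dr\int_{\mathbb R}\xi\,e^{i\xi(B^{H_1}_r-\tilde B^{H_2}_s)}\bigl(e^{-i\xi a}-e^{-i\xi b}\bigr)e^{-\varepsilon\xi^2/2}\,d\xi,
$$
and after raising to the $m$-th power, taking expectations, and applying Fubini, one is left with a $2m$-fold space--time integral over $\{0<r_j<s_j<t,\ j=1,\dots,m\}$ against an $m$-fold integral over $\xi=(\xi_1,\dots,\xi_m)\in{\mathbb R}^m$. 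I would estimate $\bigl|e^{-i\xi_j a}-e^{-i\xi_j b}\bigr|\le C|\xi_j|^{\gamma}|a-b|^{\gamma}$ for a small $\gamma\in(0,1)$ and use the independence of $B^{H_1}$ and $\tilde B^{H_2}$ to factor the joint characteristic function,
$$
\Bigl|E\exp\Bigl(i\sum_{j=1}^m\xi_j(B^{H_1}_{r_j}-\tilde B^{H_2}_{s_j})\Bigr)\Bigr|=\exp\Bigl(-\tfrac12{\rm Var}\bigl(\textstyle\sum_j\xi_j B^{H_1}_{r_j}\bigr)-\tfrac12{\rm Var}\bigl(\textstyle\sum_j\xi_j\tilde B^{H_2}_{s_j}\bigr)\Bigr).
$$

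Next I would apply the local nondeterminism of fractional Brownian motion (the multi-point analogue of the estimate behind Lemma~\ref{lem2.5}): after reordering the $r_j$'s (and, separately, the $s_j$'s) and passing to the corresponding increment/partial-sum variables, each variance is bounded below by $c\sum_k y_k^2\,\Delta_k^{2H}$ with $\Delta_k$ the consecutive time-gaps. Substituting these lower bounds, carrying out the Gaussian integration in the $\xi$-variables --- after distributing the weight $\prod_j|\xi_j|^{1+\gamma}$ among the partial-sum variables attached to the two orderings --- and changing variables reduces the whole estimate to a deterministic integral over an ordered simplex of the form $\int\prod_k(\Delta^{(r)}_k)^{-\alpha_k}(\Delta^{(s)}_k)^{-\beta_k}\,dr\,ds$, whose exponents can be arranged (for $\gamma$ small) so that it converges exactly when $\frac1{H_1}+\frac1{H_2}>3$; the constant $3$, rather than the $2$ of the ordinary intersection local time, is produced by the extra factor $\xi$ coming from the derivative. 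An analogous estimate for the temporal increment $\ell'_{\varepsilon,t}(a)-\ell'_{\varepsilon,s}(a)$, in which $|a-b|^{\gamma}$ is replaced by exploiting the restriction of the outer time variable to the interval $[s,t]$, yields after the same reduction a bound $C_m|t-s|^{m\kappa}$ for some $\kappa>0$, and the triangle inequality gives
$$
E\bigl|\ell'_{\varepsilon,t}(a)-\ell'_{\varepsilon,s}(b)\bigr|^{m}\le C_m\bigl(|a-b|^{m\gamma}+|t-s|^{m\kappa}\bigr)
$$
with constants independent of $\varepsilon$.

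Finally, these uniform bounds and Kolmogorov's criterion show that the family $\{(a,t)\mapsto\ell'_{\varepsilon,t}(a)\}_{\varepsilon>0}$ is equi-H\"older on compact sets, with moments of every order (hence H\"older continuity of every order strictly below the critical exponents). Since the computation in the proof of Theorem~\ref{th3.1} carries over verbatim to $a\ne 0$ --- the only change being a bounded phase $e^{-ia(\xi+\eta)}$ in the integrand --- the random variable $\ell'_{\varepsilon,t}(a)$ converges in $L^2(\Omega)$ for each fixed $(a,t)$; combined with the equicontinuity this upgrades to almost sure uniform convergence on compacts, so the limit $\ell'_t(a)$ admits a modification that is jointly H\"older continuous in $(a,t)$, and the uniform higher moments give $L^p(\Omega)$ convergence for every $p\in(0,\infty)$ by uniform integrability. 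I expect the main obstacle to be the middle step: extracting a usable lower bound for ${\rm Var}(\sum_j\xi_j B^{H_1}_{r_j})$ from local nondeterminism after the ordering, and then splitting the weight $\prod_j|\xi_j|^{1+\gamma}$ so that the ensuing Gaussian integration produces exponents for which the simplex integral is finite precisely under the hypothesis $\frac1{H_1}+\frac1{H_2}>3$.
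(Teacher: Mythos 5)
Your moment estimates follow essentially the same route as the paper: Fourier representation, the bound $|e^{-i\xi a}-e^{-i\xi b}|\leq C|\xi|^{\gamma}|a-b|^{\gamma}$, factorization of the characteristic function by independence, the local nondeterminism lower bound after reordering the $r_j$'s and $s_j$'s, and then Gaussian integration in $\xi$ with the weight $\prod_j|\xi_j|^{1+\gamma}$ split between the two orderings (this is exactly what the paper isolates as Lemma~\ref{lem5.1}, proved via H\"older's inequality, the substitution to partial sums, and the estimate $\int_0^1 e^{-x^2u^{2H}}du\asymp(1+|x|^{1/H})^{-1}$), leading to the same critical condition $\frac1{H_1}+\frac1{H_2}>3$ and the same treatment of the temporal increment through the restriction of the $\tilde B^{H_2}$-time variables to $[t,t']$. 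That part of your plan is sound and is Steps II and III of the paper's proof.

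The gap is in how you pass to the limit $\varepsilon\to 0$. You only control increments in $(a,t)$ uniformly in $\varepsilon$, and then claim that equi-H\"older continuity combined with the pointwise $L^2$-convergence (from the Theorem~\ref{th3.1} computation with the extra phase) ``upgrades to almost sure uniform convergence on compacts.'' Pointwise $L^2$-convergence gives convergence in probability; together with equicontinuity in $(a,t)$ this yields locally uniform convergence in probability, equivalently almost sure uniform convergence along subsequences $\varepsilon_n\to0$, but it does not give almost sure convergence of the full family as $\varepsilon\to0$, which is what the theorem asserts: nothing in your bounds controls the oscillation of $\varepsilon\mapsto\ell'_{\varepsilon,t}(a)$ between the points of a subsequence. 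The paper closes exactly this hole with its Step I, the estimate $E|\ell'_{t,\varepsilon'}(a)-\ell'_{t,\varepsilon}(a)|^{n}\leq C|\varepsilon'-\varepsilon|^{n\lambda}$ (obtained from $|e^{-\varepsilon x}-e^{-\varepsilon' x}|\leq Cx^{\lambda}|\varepsilon-\varepsilon'|^{\lambda}$ and the same $\Lambda$-bound), and then applies Kolmogorov's criterion jointly in the three variables $(t,\varepsilon,a)$; the resulting continuous extension to $\varepsilon=0$ gives the almost sure (locally uniform) convergence directly. Your argument can be repaired by adding this $\varepsilon$-increment estimate — it is the same computation with the weight $|\xi_j|^{1+2\lambda}$ — but as written the ``almost surely'' part of the convergence is not established.
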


In order to prove the theorem we denote $\widetilde{{\mathbb D}}_{t,t'}(u)=\{t<u_n<u_{n-1}<\cdots<u_1<t'\}$ for $0\leq t<t'\leq T$, and
\begin{equation}\label{sec5-eq5.400-00}
\begin{split}
\widetilde{\Lambda}(t,t',n,\gamma):&=\int_{\widetilde{{\mathbb D}}_{t,t'}(u)}\int_{\widetilde{{\mathbb D}}_{0,t'}(v)}dv_1\cdots dv_ndu_1\cdots du_n\int_{\mathbb{R}^n} \prod_{j=1}^n|\xi_j|^{1+\gamma}d\xi_j\\
&\quad\cdot
\exp\left\{-\frac12 \kappa\left[(\sum\limits_{k=1}^n\xi'_k )^2(u_n)^{2H_1}+(\sum\limits_{k=1}^n\xi''_k )^2(v_n)^{2H_2}\right]\right\}\\
&\quad\cdot\prod_{j=1}^{n-1}\exp\left\{-\frac12 \kappa\left[( \sum\limits_{k=1}^{j}\xi'_k)^2(u_j-u_{j+1})^{2H_1}+ (\sum\limits_{k=1}^{j}\xi''_k)^2(v_j-v_{j+1})^{2H_2}\right]\right\}
\end{split}
\end{equation}
with $\gamma\geq 0$, $\kappa>0$ and $n=1,2,\ldots$, where $\xi'_1,\xi'_2,\ldots,\xi'_n$ and $\xi''_1,\xi''_2,\ldots,\xi''_n$ are two arbitrary rearrangements of the set $\{\xi_1,\xi_2,\ldots,\xi_n\}$.

\begin{lemma}\label{lem5.1}
Let $0<H_1,H_2<1$ and $\frac1{H_1}+\frac1{H_2}>3$. Then we have
\begin{equation}\label{sec5-eq5.400-01}
\widetilde{\Lambda}(t,t',n,\gamma)\leq C(t'-t)^{n\theta},\quad n=1,2,\ldots
\end{equation}
with $0<\theta\leq 1-H_2\left(\frac1q+2(1+\gamma)(1-\alpha)\right)$, provided $0< \alpha<1$, $q>1$ and
$$
\begin{cases}
2(1+\gamma)(1-\alpha)<\frac{1}{H_2}-\frac1q,&\\
2(1+\gamma)\alpha<\frac{1}{H_1}-1+\frac1q.
\end{cases}
$$
In particular, we can take $\theta=\frac1{H_1}\left(\alpha(H_1+H_2)-H_2\right)$ with $\frac{H_2}{H_1+H_2}<\alpha<1$ and $q=(1-\alpha)^{-1}$, provided  $2\gamma<(\frac{1}{H_1}+\frac{1}{H_2}-3)\wedge 2$.
\end{lemma}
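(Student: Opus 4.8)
The plan is to estimate the multiple integral $\widetilde{\Lambda}(t,t',n,\gamma)$ by first performing the Gaussian integration over the frequency variables $\xi_1,\dots,\xi_n$ and then handling the resulting integral over the time simplices $\widetilde{{\mathbb D}}_{t,t'}(u)\times\widetilde{{\mathbb D}}_{0,t'}(v)$. The quadratic form in the exponential is, as usual for this kind of chaos computation, a sum of squares in the \emph{incremental} variables, but the catch here is that the $u$-increments involve the partial sums $\sigma'_j=\sum_{k\le j}\xi'_k$ while the $v$-increments involve $\sigma''_j=\sum_{k\le j}\xi''_k$, and these are two \emph{different} rearrangements of the same $\xi$'s. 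The first step is therefore to pass from $(\xi_1,\dots,\xi_n)$ to, say, the partial sums $\sigma'_1,\dots,\sigma'_n$ attached to the first rearrangement (this is a unimodular change of variables), bound the factor $\prod_j|\xi_j|^{1+\gamma}$ using $|\xi_j|\le |\sigma'_{m}|+|\sigma'_{m-1}|$ for the appropriate indices, and then, since the second exponential only helps, simply \emph{drop} the $v$-exponential factors except for the innermost one $\exp\{-\tfrac12\kappa(\sigma''_n)^2 v_n^{2H_2}\}$ — note $\sigma''_n=\sum_k\xi''_k=\sum_k\xi_k=\sigma'_n$, so this last factor is expressible in the new variables. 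This reduces the problem to a $\xi$-integral of the form $\int_{{\mathbb R}^n}\prod_j|\sigma'_j|^{1+\gamma}\cdot(\text{mild extra powers})\cdot\exp\{-\tfrac12\kappa[\sum_j(\sigma'_j)^2 a_j]\}\,d\sigma'$, where $a_1=u_1-u_2,\dots,a_{n-1}=u_{n-1}-u_n$ and $a_n=u_n^{2H_1}+v_n^{2H_2}$ (after absorbing the $v_n$-term into the $\sigma'_n$-slot, since $\sigma''_n=\sigma'_n$).

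Second, I would carry out the Gaussian integral in the $\sigma'$-variables. Because the quadratic form is now diagonal in $\sigma'_1,\dots,\sigma'_n$, this factorizes and yields $\prod_{j=1}^{n}(u_j-u_{j+1})^{-(1+\gamma/2\,\cdot 2 + \dots)}$ — more precisely each $\sigma'_j$-integral produces a negative power of the corresponding increment, specifically of order $1+\gamma$ (from $\int |x|^{1+\gamma}e^{-cx^2a}dx\asymp a^{-(1+\gamma+1)/2}=a^{-1-\gamma/2}$ — I will keep careful track of the exact exponent and of the extra powers of $\sigma'$ produced by expanding $|\xi_j|^{1+\gamma}$). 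The upshot is an upper bound of the shape
\begin{equation*}
\widetilde{\Lambda}(t,t',n,\gamma)\le C\int_{\widetilde{{\mathbb D}}_{t,t'}(u)}\int_{\widetilde{{\mathbb D}}_{0,t'}(v)}\frac{du\,dv}{\bigl[(u_n)^{2H_1}+(v_n)^{2H_2}\bigr]^{\beta_n}\prod_{j=1}^{n-1}(u_j-u_{j+1})^{\beta}},
\end{equation*}
where $\beta$ and $\beta_n$ are explicit numbers depending on $\gamma$ and $H_1$. At this point Young's inequality $\bigl[(u_n)^{2H_1}+(v_n)^{2H_2}\bigr]\ge C (u_n)^{2\alpha H_1}(v_n)^{2(1-\alpha)H_2}$ is used to split the innermost factor into a $u$-part and a $v$-part, and then the $v$-simplex integral (involving only $v_n$) converges iff $2(1-\alpha)H_2\beta_n<1$, contributing a factor $(t')^{\text{const}}\le C(t'-t)^{\text{const}}$ after noting the $v$-range is $[0,t']$ — this is exactly where the condition $2(1+\gamma)(1-\alpha)<\frac1{H_2}-\frac1q$ with the $q$-exponent appears (one uses a mild Hölder split in the $v_n$-power to separate the $\frac1q$ and $\frac1{H_2}$ thresholds).

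Third, the remaining $u$-integral is over the simplex $\{t<u_n<\cdots<u_1<t'\}$ with integrand $(u_n)^{-2\alpha H_1\beta_n}\prod_{j<n}(u_j-u_{j+1})^{-\beta}$. Since all exponents are $<1$ (this is precisely the content of the second inequality $2(1+\gamma)\alpha<\frac1{H_1}-1+\frac1q$, guaranteeing $\beta<1$ and $2\alpha H_1\beta_n<1$), each of the $n$ one-dimensional integrations over a gap of total length $\le t'-t$ contributes a power $(t'-t)^{1-\beta}$ (or $(t'-t)^{1-2\alpha H_1\beta_n}$ for the $u_n$-slot), and by the standard Dirichlet/Beta-function bound for iterated integrals over simplices the product of Gamma-function constants stays summable — giving $\widetilde{\Lambda}(t,t',n,\gamma)\le C^n (t'-t)^{n\theta}/\Gamma(n\theta+1)$ or at least $\le C(t'-t)^{n\theta}$ with $\theta$ the minimum of the per-step exponents, i.e. $\theta\le 1-H_2(\frac1q+2(1+\gamma)(1-\alpha))$. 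The final "in particular" clause is just the arithmetic check that the choices $\alpha\in(\frac{H_2}{H_1+H_2},1)$, $q=(1-\alpha)^{-1}$ satisfy both displayed inequalities precisely when $2\gamma<(\frac1{H_1}+\frac1{H_2}-3)\wedge 2$, and that with these choices the per-step exponent simplifies to $\frac1{H_1}(\alpha(H_1+H_2)-H_2)$; I would verify this by direct substitution.

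The main obstacle I anticipate is the bookkeeping around the two distinct rearrangements $\xi'$ and $\xi''$: one must argue carefully that dropping all but one of the $v$-exponentials is legitimate (it is, since they are positive), that the surviving $v_n$-factor really does sit in the same $\sigma'_n=\sigma''_n$ slot, and that after the unimodular change of variables to $\sigma'_1,\dots,\sigma'_n$ the bound $\prod_j|\xi_j|^{1+\gamma}\le\prod_j(|\sigma'_j|+|\sigma'_{j-1}|)^{1+\gamma}$ (with $\sigma'_0:=0$) does not generate an exponent exceeding $1$ in any increment slot after the Gaussian integration — this is exactly why the hypotheses are stated as a pair of strict inequalities with the free parameters $\alpha,q$, and a slightly wasteful split would fail. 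A secondary (routine) point is keeping the constant $C$ independent of $n$ in the iterated-simplex estimate, for which one invokes $\prod_{j=1}^n B(\cdot,\cdot)\le C^n$ via Stirling, exactly as in Theorem~\ref{th5.1}'s intended moment bounds.
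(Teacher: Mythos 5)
There is a genuine gap, and it sits exactly at the point you flagged as the ``main obstacle.'' Your device for handling the two rearrangements --- keeping only the innermost $v$-exponential $\exp\{-\tfrac12\kappa(\sum_k\xi''_k)^2 v_n^{2H_2}\}$ and discarding the factors $\exp\{-\tfrac12\kappa(\sum_{k\le j}\xi''_k)^2(v_j-v_{j+1})^{2H_2}\}$ for $j=1,\dots,n-1$ --- throws away the only decay available in the $H_2$-direction, so after the change of variables to the partial sums $\sigma'_j$ the entire weight $\prod_j|\xi_j|^{1+\gamma}$ must be absorbed by the $H_1$-exponentials alone. Each Gaussian slot then produces a factor of order $(u_j-u_{j+1})^{-H_1(1+\beta_j)}$ with $\beta_j$ as large as $2(1+\gamma)$ (or $(1+|x_j|^2)^{1+\gamma}$ uniformly, giving $H_1(3+2\gamma)$ per slot), and integrability over the simplex forces a condition of the type $H_1(3+2\gamma)<1$. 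This is strictly stronger than the hypothesis $2\gamma<(\frac1{H_1}+\frac1{H_2}-3)\wedge 2$: for instance $H_1=H_2=0.6$ satisfies $\frac1{H_1}+\frac1{H_2}>3$, but $3H_1>1$, so your bound diverges already for $\gamma=0$. Relatedly, the exponent $1-H_2\bigl(\frac1q+2(1+\gamma)(1-\alpha)\bigr)$ and the displayed pair of inequalities in $(\alpha,q)$ cannot emerge from a ``mild H\"older split in the $v_n$-power'': with a single surviving $H_2$-factor there is no mechanism to absorb the $(1-\alpha)$-share of all $n$ weights $|\xi_j|^{1+\gamma}$, so the parameters $\alpha,q$ never actually enter your computation in the required way.

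The paper resolves the two-rearrangement problem differently, and this is the idea your proposal is missing: integrate the \emph{time} variables first, for fixed $\xi$, obtaining (via $\int_0^1 e^{-x^2u^{2H}}du\asymp(1+|x|^{1/H})^{-1}$ and its analogue on $[t,t']$) the two bounds $\Lambda_1\lesssim\prod_j\bigl(1+|\sum_{k\le j}\xi'_k|^{1/H_1}\bigr)^{-1}$ and $\Lambda_2\lesssim(t'-t)^n\prod_j\bigl(1+(t'-t)|\sum_{k\le j}\xi''_k|^{1/H_2}\bigr)^{-1}$, and then apply H\"older's inequality in the $\xi$-variables with exponents $p,q$, splitting $|\xi_j|^{1+\gamma}$ into the shares $(1+\gamma)\alpha$ and $(1+\gamma)(1-\alpha)$. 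Each H\"older factor involves only one rearrangement, so in each one you may legitimately change variables to the corresponding partial sums and use $\prod_j|\xi_j|\le 2\prod_j(1+|x_j|^2)$; the two integrability conditions $\frac{p}{H_1}-2p(1+\gamma)\alpha>1$ and $\frac{q}{H_2}-2q(1+\gamma)(1-\alpha)>1$ are exactly the displayed hypotheses, the $(t'-t)$-power comes out of the $H_2$-factor as $(t'-t)^{q-[1+2q(1+\gamma)(1-\alpha)]H_2}$ per slot, and the ``in particular'' clause is the arithmetic with $p=\alpha^{-1}$, $q=(1-\alpha)^{-1}$. In short: the decay of \emph{both} exponential families must be retained and shared via H\"older; dropping one family cannot prove the lemma under the stated condition on $H_1,H_2,\gamma$. (The remaining parts of your outline --- Young's inequality on $(u_n)^{2H_1}+(v_n)^{2H_2}$, simplex bookkeeping, $n$-dependence of constants --- are either unnecessary in the paper's route or routine.)
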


The above lemma will be proved in Appendix~\ref{app2}.

\begin{proof}[Proof of Theorem~\ref{th5.1}]
By Kolmogorov continuity criterion, we first show that the estimates
\begin{equation}\label{sec5-eq5.3}
E\left|\ell'_{t',\varepsilon'}(a')-\ell'_{t,\varepsilon}(a)\right|^n\leq C|(t',\varepsilon',a')-(t,\varepsilon,a)|^{n\lambda},\quad n=2,4,\ldots
\end{equation}
hold for all $t,t'\in [0,T],a,a'\in {\mathbb R},\varepsilon,\varepsilon'>0$ and some $\lambda\in (0,1]$, where $|\cdot|$ denotes the Euclidean distance in ${\mathbb R}^3$. The estimate~\eqref{sec5-eq5.3} will be done in three parts and denote ${\mathbb D}_{l,l'}=\{l\leq r<s\leq l'\}$.

{\bf Step I.} We need to obtain the estimates
$$
E\left|\ell'_{t,\varepsilon'}(a)-\ell'_{t,\varepsilon}(a)\right|^n\leq C |\varepsilon'-\varepsilon|^{n\lambda},\quad n=2,4,\ldots
$$
for all $\varepsilon,\varepsilon'>0,a\in {\mathbb R}$ and some $\lambda\in [0,1]$. By~\eqref{sec3-eq3.3} we have
\begin{equation}\label{sec5-eq5.4}
\begin{split}
&E\left|\ell'_{t,\varepsilon'}(a)-\ell'_{t,\varepsilon}(a)\right|^n\\ &=\frac{1}{(2\pi)^n}\Bigl|\int_{({\mathbb D}_{0,t})^n}dr_1\cdots dr_nds_1\cdots ds_n\int_{\mathbb{R}^n} E\prod_{j=1}^n
\exp\left\{i\xi_j(B_{r_j}^{H_{1}}-B_{s_j}^{H_{2}})\right\}\\
&\qquad\cdot \prod_{j=1}^n\xi_j\left(\exp\left\{-
\frac{1}{2}\varepsilon'\xi^{2}_j\right\}-\exp\left\{ -\frac{1}{2}\varepsilon\xi^{2}_j\right\}\right)
e^{-i\xi_ja}d\xi_j\Bigr|\\
&\leq C |\varepsilon'-\varepsilon|^{n\lambda}
\int_{({\mathbb D}_{0,t})^n}dr_1\cdots dr_nds_1\cdots ds_n\int_{\mathbb{R}^n}
\prod_{j=1}^n|\xi_j|^{1+2\lambda}d\xi_j\\
&\qquad\cdot
\Bigl|E\prod_{j=1}^n\exp\left\{i\xi_j(B_{r_j}^{H_{1}}-B_{s_j}^{H_{2}})\right\}\Bigr|\\
&\equiv C |\varepsilon'-\varepsilon|^{n\lambda}\Lambda(0,t,n,2\lambda)
\end{split}
\end{equation}
for all $n=2,4,\ldots$ and some $\lambda\in (0,1)$ by the inequality
$$
\left|e^{-\varepsilon x}-e^{-\varepsilon' x}\right|\leq Cx^{\lambda}|\varepsilon-\varepsilon'|^{\lambda}
$$
for all $x>0$ and $\lambda\in (0,1)$.

Now, we need to prove $\Lambda(0,t,n,2\lambda)<\infty$. Let us first consider the expectation of product. This expectation in the integrand will take different forms over different regions of integration, depending on the ordering of $r_j$ and $s_j$. Fix such an ordering and let $u_1>u_2>\cdots>u_n$ and $v_1>v_2>\cdots>v_n$ be the relabeling of the sets $\{r_1,r_2,\ldots,r_n\}$ and $\{s_1,s_2,\ldots,s_n\}$ respectively. Thanks to  the local nondeterminism of fBm, we get
\begin{align*}
{\rm Var}\left(\sum_{j=1}^n\eta_jB_{u_j}^{H}\right)&={\rm Var}\left(\sum_{j=1}^{n-1}\sum_{k=1}^j\eta_k \left(B_{u_j}^{H}-B_{u_{j+1}}^{H}\right) +\sum_{k=1}^n\eta_kB^{H}_{u_n}\right)\\
&\geq \kappa\sum_{j=1}^{n-1}(\sum_{k=1}^j\eta_k )^2(u_j-u_{j+1})^{2H}+\kappa (\sum_{k=1}^n\eta_k)^2(u_n)^{2H}
\end{align*}
for a constant $\kappa>0$ and any fBm $B^H$ with Hurst index $H\in (0,1)$, where
$\eta_1,\eta_2,\ldots,\eta_n$ are some real numbers. It follows that

\begin{align*}
{\rm Var}\left(\sum_{j=1}^n\xi_j(B_{r_j}^{H_{1}}-\tilde{B}_{s_j}^{H_{2}}) \right)&=
{\rm Var}\left(\sum_{j=1}^n\xi_jB_{r_j}^{H_{1}}\right)+{\rm Var}\left(\sum_{j=1}^n\xi_j\tilde{B}_{s_j}^{H_{2}}\right)\\
&={\rm Var}\left(\sum_{j=1}^n\xi'_jB_{u_j}^{H_1}\right)+{\rm Var}\left(\sum_{j=1}^n\xi''_j\tilde{B}_{v_j}^{H_2}\right)\\
&\geq \kappa\sum_{j=1}^{n-1}(\sum_{k=1}^j\xi'_k )^2(u_j-u_{j+1})^{2H_1}+\kappa (\sum_{k=1}^n\xi'_k)^2(u_n)^{2H_1}\\
&\qquad+\kappa\sum_{j=1}^{n-1}(\sum_{k=1}^j\xi''_k )^2(v_j-v_{j+1})^{2H_2}+\kappa (\sum_{k=1}^n\xi''_k)^2(v_n)^{2H_2},
\end{align*}
where $\xi'_1,\xi'_2,\ldots,\xi'_n$ and $\xi''_1,\xi''_2,\ldots,\xi''_n$ are two rearrangements of the set $\{\xi_1,\xi_2,\ldots,\xi_n\}$ such that $\{(\xi'_1,u_1),(\xi'_2,u_2),\ldots,(\xi'_n,u_n)\}$ and $\{(\xi''_1,v_1),(\xi''_2,v_2),\ldots,(\xi''_n,v_n)\}$ are the rearrangements of $\{(\xi_1,r_1),(\xi_2,r_2),\ldots,(\xi_n,r_n)\}$ and $\{(\xi_1,s_1),(\xi_2,s_2),\ldots,(\xi_n,s_n)\}$ via the second coordinates, respectively. Combining this with Lemma~\ref{lem5.1}, we get
\begin{align*}
\Lambda(0,t,n,2\lambda)
&\leq C \int_{\widetilde{{\mathbb D}}_{0,t}(u)}du_1\cdots du_n\int_{\widetilde{{\mathbb D}}_{0,t}(v)} dv_1\cdots dv_n \int_{\mathbb{R}^n}\prod_{j=1}^n|\xi_j|^{1+2\lambda}d\xi_j\\
&\qquad\cdot
\exp\left\{-\frac12\kappa\left[(\sum\limits_{k=1}^n\xi'_k )^2(u_n)^{2H_1}
+(\sum\limits_{k=1}^n\xi''_k )^2(v_n)^{2H_2}\right]\right\}\\
&\qquad\cdot
\prod_{j=1}^{n-1}\exp\left\{-\frac12 \kappa\left[(\sum\limits_{k=1}^{j}\xi'_k)^2(u_j-u_{j+1})^{2H_1} +(\sum\limits_{k=1}^{j}\xi''_k)^2(v_j-v_{j+1})^{2H_2}\right]\right\}\\
&=C \widetilde{\Lambda}(0,t,n,2\lambda)<\infty
\end{align*}
for $t\in [0,T]$ and $n=2,4,\ldots$, provided $0<4\lambda<\frac{1}{H_1}+\frac{1}{H_2}-3$, which proves that the estimate
$$
E\left|\ell'_{t,\varepsilon'}(a)-\ell'_{t,\varepsilon}(a)\right|^n\leq C |\varepsilon'-\varepsilon|^{n\lambda}
$$
holds for $n=2,4,\ldots$ and $t\in [0,T]$, if we choose $\lambda$ so that $0<4\lambda<\frac{1}{H_1}+\frac{1}{H_2}-3$.

{\bf Step II.} We obtain the estimate
\begin{equation}\label{sec5-eq5.5}
E\left|\ell'_{t,\varepsilon}(b)-\ell'_{t,\varepsilon}(a)\right|^n\leq C |b-a|^{n\lambda},\quad n=2,4,\ldots
\end{equation}
for all $\varepsilon>0$, $a,b\in {\mathbb R}$, $t\geq 0$ and some $\lambda\in (0,1)$. We have, for all $a,b\in {\mathbb R}$ and $t\in [0,T]$
\begin{equation}\label{sec5-eq5.7}
\begin{split}
E|\ell'_{t,\varepsilon}(b)-&\ell'_{t,\varepsilon}(a)|^n\\ &=\frac{1}{(2\pi)^n}\Bigl|\int_{({\mathbb D}_{0,t})^n}dr_1\cdots dr_nds_1\cdots ds_n\int_{\mathbb{R}^n}E\prod_{j=1}^n
\exp\left\{i\xi_j(B_{r_j}^{H_{1}}-\tilde{B}_{s_j}^{H_{2}})\right\}\\
&\qquad\cdot \prod_{j=1}^n\xi_j\left(\exp\left\{-
ib\xi_j\right\}-\exp\left\{ -ia\xi_j\right\}\right)
e^{-\frac{1}{2}\varepsilon\xi^{2}_j}d\xi_j\Bigr|\\
&\leq C|b-a|^{n\lambda}\int_{({\mathbb D}_{0,t})^n}dr_1\cdots dr_nds_1\cdots ds_n\int_{\mathbb{R}^n}\prod_{j=1}^n|\xi_j|^{1+\lambda}d\xi_j\\
&\qquad\cdot\Bigl|E\prod_{j=1}^n
\exp\left\{i\xi_j(B_{r_j}^{H_{1}}-\tilde{B}_{s_j}^{H_{2}})\right\}\Bigr|\\
&= C|b-a|^{n\lambda} \Lambda(0,t,n,\lambda)
\end{split}
\end{equation}
for all $n=1,2,\ldots$ and $\lambda\in [0,1]$ by the inequality
$$
\left|e^{-ixb}-e^{-ixa}\right|\leq C|x|^{\lambda}|b-a|^{\lambda}
$$
for all $x\in {\mathbb R}$ and $\lambda\in [0,1]$. This shows that
the estimate~\eqref{sec5-eq5.5} holds for $n=2,4,\ldots$ and $t\in [0,T]$, if we choose $\lambda$ so that $2\lambda<\frac{1}{H_1}+\frac{1}{H_2}-3$.

{\bf Step III.} We obtain the estimate
\begin{equation}\label{sec5-eq5.6}
E\left|\ell'_{t',\varepsilon}(a)-\ell'_{t,\varepsilon}(a)\right|^n\leq C |t'-t|^{n\lambda},\quad n=2,4,\ldots
\end{equation}
for all $\varepsilon>0$, $a\in {\mathbb R}$, $t'>t\geq 0$ and some $\lambda\in (0,1)$. In order to prove the estimate~\eqref{sec5-eq5.6} we have, for all $a\in {\mathbb R},t,t'\in [0,T]$ and $t<t'$
\begin{equation}\label{sec5-eq5.8}
\begin{split}
E|\ell'_{t',\varepsilon}(a)-\ell'_{t,\varepsilon}(a)|^n &=\frac{1}{(2\pi)^n}\Bigl|\int_{({\mathbb D}_{0,t'}\setminus{\mathbb D}_{0,t})^n}dr_1\cdots dr_nds_1\cdots ds_n\int_{\mathbb{R}^n}
\prod_{j=1}^n\xi_je^{-i\xi_j a}e^{-\frac1{2}\varepsilon{\xi^{2}_j}}d\xi_j\\
&\qquad\cdot E\prod_{j=1}^n
\exp\left\{i\xi_j(B_{r_j}^{H_{1}}-\tilde{B}_{s_j}^{H_{2}})\right\}\Bigr|\\
&\leq \int_{[t,t']^n}\int_{[0,s_1]\times [0,s_2]\times\cdots\times [0,s_n]}dr_1\cdots dr_nds_1\cdots ds_n\int_{\mathbb{R}^n}\prod_{j=1}^n|\xi_j|d\xi_j\\
&\qquad\cdot \Bigl|E\prod_{j=1}^n
\exp\left\{i\xi_j(B_{r_j}^{H_{1}}-\tilde{B}_{s_j}^{H_{2}})\right\}\Bigr|\\
\end{split}
\end{equation}
for all $n=2,4,\ldots$. It follows from Step I and Lemma~\ref{lem5.1} that
\begin{align*}
E\left|\ell'_{t'}(a)-\ell'_{t}(a)\right|^n\leq C \widetilde{\Lambda}(t,t',n,0) \leq C(t'-t)^{n\lambda}
\end{align*}
for all $0<\lambda\leq \frac1{H_1}\left(\alpha(H_1+H_2)-H_2\right)$ with $\frac{H_2}{H_1+H_2}<\alpha<1$. Thus, we have obtained the desired estimate~\eqref{sec5-eq5.3} and the limit
$$
\ell'_{t}(a)=\lim_{\varepsilon\to 0}\ell'_{t,\varepsilon}(a)
$$
exists almost surely, and in $L^p(\Omega)$ for all $p\in (0,\infty)$.

Finally, as two direct consequences of Step II and Step III we see that
\begin{equation}\label{sec5-eq5.50}
E\left|\ell'_{t}(b)-\ell'_{t}(a)\right|^n\leq C |b-a|^{n\lambda},\quad n=2,4,\ldots
\end{equation}
and
\begin{equation}\label{sec5-eq5.60}
E\left|\ell'_{t'}(a)-\ell'_{t}(a)\right|^n\leq C |t'-t|^{n\beta},\quad n=2,4,\ldots
\end{equation}
for all $a,b\in {\mathbb R}$ and $t',t\geq 0$ if we choose $\lambda$ and $\beta$ so that $2\lambda<\frac{1}{H_1}+\frac{1}{H_2}-3$ and $0<\beta\leq \frac1{H_1}\left(\alpha(H_1+H_2)-H_2\right)$ with $\frac{H_2}{H_1+H_2}<\alpha<1$. These show that $\ell'_{t}(a)$ exists in $L^p(\Omega)$ for all $p>0$, $t\in [0,T]$, $a\in {\mathbb R}$, and has a modification which is a.s. jointly H\"older continuous in $(a,t)$.
\end{proof}

\begin{theorem}\label{th5.2}
Let $0<H_1,H_2<1$. The processes $\ell_{\varepsilon,t}(a),\varepsilon>0$ converges almost surely, and in $L^p(\Omega)$ for all $p\in (0,\infty)$, as $\varepsilon$ tends to zero. Moreover, the process $\ell_t(a)$ has a modification which is a.s. jointly H\"older continuous in $(a,t)$. In particular, $a\mapsto \ell_t(a)$ is H\"older continuous of order $\gamma<\min\{\frac{1}{2H_1}+\frac{1}{2H_2}-\frac12,1\}$.
\end{theorem}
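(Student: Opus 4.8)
The plan is to re-run the three--step Kolmogorov argument from the proof of Theorem~\ref{th5.1}, the only structural change being that the Fourier representation
$$
\ell_{\varepsilon,t}(a)=\frac1{2\pi}\int_0^tds\int_0^sdr\int_{\mathbb R}e^{i\xi(B^{H_1}_r-\tilde B^{H_2}_s-a)}e^{-\varepsilon\xi^2/2}d\xi
$$
carries no factor $\xi$ in front of the exponential, in contrast to~\eqref{sec3-eq3.3}. Consequently, after expanding $E|\cdots|^n$, relabelling the two families of time variables into decreasing chains $u_1>\cdots>u_n$, $v_1>\cdots>v_n$ and invoking the local nondeterminism of fBm exactly as in Theorem~\ref{th5.1}, every $n$--th moment estimate will reduce to bounding the quantity obtained from $\widetilde\Lambda(t,t',n,\gamma)$ in~\eqref{sec5-eq5.400-00} when the weight $\prod_{j=1}^n|\xi_j|^{1+\gamma}$ is replaced by $\prod_{j=1}^n|\xi_j|^{\gamma}$; denote it $\widetilde\Lambda_0(t,t',n,\gamma)$. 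The first task is the analogue of Lemma~\ref{lem5.1}: the same computation (local nondeterminism lower bound for ${\rm Var}(\sum_j\xi_j(B^{H_1}_{r_j}-\tilde B^{H_2}_{s_j}))$, Gaussian integration in $\xi$, Hölder's inequality, and a change of variables on the ordered simplices, with the exponent of $|\xi_j|$ entering only additively) gives
$$
\widetilde\Lambda_0(t,t',n,\gamma)\le C(t'-t)^{n\theta},\qquad n=1,2,\ldots,
$$
for some $\theta=\theta(H_1,H_2,\gamma)\in(0,1]$, provided $2\gamma<\bigl(\tfrac1{H_1}+\tfrac1{H_2}-1\bigr)\wedge 2$. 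Concretely this is Lemma~\ref{lem5.1} with $\gamma$ replaced by $\gamma-1$: the hypothesis $2\gamma<\tfrac1{H_1}+\tfrac1{H_2}-3$ becomes $2\gamma<\tfrac1{H_1}+\tfrac1{H_2}-1$, and the fact that $\gamma-1$ may be negative is immaterial, since one only needs $1+(\gamma-1)=\gamma>-1$ to keep $|\xi_j|^{\gamma}$ integrable near the origin while negative powers of $|\xi_j|$ only help at infinity.

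Granting this, the three steps proceed as before. In Step~I, using $|e^{-\varepsilon x}-e^{-\varepsilon'x}|\le Cx^{\lambda}|\varepsilon-\varepsilon'|^{\lambda}$ with $x=\xi_j^2/2$,
$$
E\bigl|\ell_{t,\varepsilon'}(a)-\ell_{t,\varepsilon}(a)\bigr|^n\le C|\varepsilon'-\varepsilon|^{n\lambda}\,\widetilde\Lambda_0(0,t,n,2\lambda)\le C|\varepsilon'-\varepsilon|^{n\lambda}
$$
for $n=2,4,\ldots$, valid whenever $0<4\lambda<\tfrac1{H_1}+\tfrac1{H_2}-1$; this already shows $\{\ell_{\varepsilon,t}(a)\}_{\varepsilon>0}$ is Cauchy in every $L^p(\Omega)$. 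In Step~II, using $|e^{-ixb}-e^{-ixa}|\le C|x|^{\lambda}|b-a|^{\lambda}$ for $\lambda\in[0,1]$,
$$
E\bigl|\ell_{t,\varepsilon}(b)-\ell_{t,\varepsilon}(a)\bigr|^n\le C|b-a|^{n\lambda}\,\widetilde\Lambda_0(0,t,n,\lambda)\le C|b-a|^{n\lambda}
$$
for any $\lambda\in(0,1)$ with $2\lambda<\tfrac1{H_1}+\tfrac1{H_2}-1$; this is the step that pins down the spatial exponent, and it passes to the limit $\varepsilon\downarrow0$ (by Fatou, or by the $L^p$ convergence) giving the same bound for $\ell_t$. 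In Step~III, since $\mathbb D_{0,t'}\setminus\mathbb D_{0,t}\subset\{0<r<s<t'\}\cap\{s>t\}$, the $n$--th moment of the time increment is dominated by $\widetilde\Lambda_0(t,t',n,0)\le C(t'-t)^{n\theta}$, where $\gamma=0$ is admissible for all $H_1,H_2\in(0,1)$ because $\tfrac1{H_1}+\tfrac1{H_2}>2>1$. Combining the three estimates yields
$$
E\bigl|\ell_{t',\varepsilon'}(a')-\ell_{t,\varepsilon}(a)\bigr|^n\le C\bigl|(t',\varepsilon',a')-(t,\varepsilon,a)\bigr|^{n\lambda}
$$
for $n=2,4,\ldots$ and some $\lambda\in(0,1)$, uniformly in $t,t'\in[0,T]$, $a,a'\in{\mathbb R}$, $\varepsilon,\varepsilon'>0$.

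Finally I would apply the Kolmogorov continuity criterion to $(\varepsilon,a,t)\mapsto\ell_{\varepsilon,t}(a)$ on $(0,\delta_0]\times{\mathbb R}\times[0,T]$, choosing a fixed even $n$ with $n\lambda>3$: this produces a modification jointly Hölder continuous in $(\varepsilon,a,t)$ a.s., and letting $\varepsilon\downarrow0$ along it gives the a.s. limit $\ell_t(a)$, which by Step~I coincides with the $L^p(\Omega)$ limit for every $p\in(0,\infty)$. Reading the spatial regularity off Step~II, $a\mapsto\ell_t(a)$ is a.s. Hölder of every order $\gamma$ with $2\gamma<\tfrac1{H_1}+\tfrac1{H_2}-1$ and $\gamma<1$ (the latter forced by the elementary inequality used), i.e.\ $\gamma<\min\{\tfrac1{2H_1}+\tfrac1{2H_2}-\tfrac12,\,1\}$. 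The main obstacle is the $\widetilde\Lambda_0$ version of Lemma~\ref{lem5.1}: one must check that the $\xi$--integration followed by the ordered--simplex change of variables still converges after dropping one power of $|\xi_j|$ per variable, which is exactly where the threshold $\tfrac1{H_1}+\tfrac1{H_2}>1$ (always satisfied) replaces the threshold $\tfrac1{H_1}+\tfrac1{H_2}>3$ of the derivative case, and where one must keep enough control on $\theta$ and on the constant for some fixed large $n$ to make Kolmogorov's criterion applicable. I expect this to be a routine but bookkeeping-heavy adaptation of the Appendix proof of Lemma~\ref{lem5.1} with the $-1$ shift in $\gamma$.
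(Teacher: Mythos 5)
Your proposal is correct and follows essentially the same route as the paper: the paper's proof of Theorem~\ref{th5.2} simply reruns the three-step Kolmogorov argument of Theorem~\ref{th5.1} with one power of $\xi_j$ removed from the Fourier representation, yielding exactly your shifted condition $0<2\lambda<\bigl(\tfrac1{H_1}+\tfrac1{H_2}-1\bigr)\wedge 2$ for the spatial increment and hence the exponent $\gamma<\min\{\tfrac1{2H_1}+\tfrac1{2H_2}-\tfrac12,1\}$. The only cosmetic remark is that your aside about negative powers of $|\xi_j|$ is unnecessary, since the exponents actually needed ($0$, $\lambda$, $2\lambda$) are all nonnegative, so Lemma~\ref{lem5.1}'s argument applies verbatim with the shifted weight.
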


\begin{proof}
In a same way as proof of Theorem~\ref{th5.1}, one can obtain the estimate
\begin{equation}\label{sec5-eq5.3-1}
E\left|\ell_{t',\varepsilon'}(a')-\ell_{t,\varepsilon}(a)\right|^n\leq C|(t',\varepsilon',a')-(t,\varepsilon,a)|^{n\beta},\quad n=2,4,\ldots
\end{equation}
for all $t,t'\in [0,T],a,a'\in {\mathbb R},\varepsilon,\varepsilon'>0$ and some $\beta\in (0,1]$. In particular, we have
\begin{align*}
E|\ell_{t,\varepsilon}(a')-\ell_{t,\varepsilon}(a)|^n &=\frac{1}{(2\pi)^n}\Bigl|\int_{({\mathbb D}_{0,t})^n}dr_1\cdots dr_nds_1\cdots ds_n\int_{\mathbb{R}^n}E\prod_{j=1}^n
\exp\left\{i\xi_j(B_{r_j}^{H_{1}}-\tilde{B}_{s_j}^{H_{2}})\right\}\\
&\qquad\cdot \prod_{j=1}^n \left(\exp\left\{-
ia'\xi_j\right\}-\exp\left\{ -ia\xi_j\right\}\right)
e^{-\frac{1}{2}\varepsilon\xi^{2}_j}d\xi_j\Bigr|\\
&\leq C|a'-a|^{n\lambda}\int_{({\mathbb D}_{0,t})^n}dr_1\cdots dr_nds_1\cdots ds_n\int_{\mathbb{R}^n}\prod_{j=1}^n|\xi_j|^{\lambda}d\xi_j\\
&\qquad\cdot\Bigl|E\prod_{j=1}^n
\exp\left\{i\xi_j(B_{r_j}^{H_{1}}-\tilde{B}_{s_j}^{H_{2}})\right\}\Bigr|\\
&\leq C|a'-a|^{n\lambda}
\end{align*}
for all $0<2\lambda<\left(\frac{1}{H_1}+\frac{1}{H_2}-1\right)\wedge 2$, and the theorem follows.
\end{proof}
\begin{corollary}
Let $0<H_1,H_2<1$. If $\frac1{H_1}+\frac1{H_2}>3$, we have $\ell'_t(a)=\frac{\partial}{\partial a}\ell_t(a)$ a.s. for all $t\geq 0$ and $a\in {\mathbb R}$, i.e. $\ell_t(a)$ is differentiable in $a$ for all $t\geq 0$ and
$$
\frac{\partial}{\partial a}\ell_t(a)=\lim_{\varepsilon\to 0}\ell'_{t,\varepsilon}(a),
$$
almost surely, and in $L^p(\Omega)$ with $p>0$.
\end{corollary}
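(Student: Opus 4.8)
The starting point is the elementary identity $\frac{\partial}{\partial a}\ell_{\varepsilon,t}(a)=\ell'_{\varepsilon,t}(a)$, valid by differentiation under the integral sign: $p_\varepsilon$ and all its derivatives are bounded, so for every $\omega$ and every $\varepsilon>0$ the function $a\mapsto\ell_{\varepsilon,t}(a)$ is of class $C^\infty$ and $\partial_a p_\varepsilon(B^{H_1}_r-\tilde B^{H_2}_s-a)=-p'_\varepsilon(B^{H_1}_r-\tilde B^{H_2}_s-a)$. In particular the fundamental theorem of calculus gives, for every $\omega$, every $t\ge 0$, every $a<b$ and every $\varepsilon>0$,
$$
\ell_{\varepsilon,t}(b)-\ell_{\varepsilon,t}(a)=\int_a^b\ell'_{\varepsilon,t}(x)\,dx .
$$
The plan is to let $\varepsilon\to0$ on both sides and then remove the $\varepsilon$-smoothing using the continuity of the limits.

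On the left-hand side, Theorem~\ref{th5.2} gives $\ell_{\varepsilon,t}(b)-\ell_{\varepsilon,t}(a)\to\ell_t(b)-\ell_t(a)$ in $L^2(\Omega)$. On the right-hand side I would show $\int_a^b\ell'_{\varepsilon,t}(x)\,dx\to\int_a^b\ell'_t(x)\,dx$ in $L^2(\Omega)$. By Jensen and Fubini, $E\big|\int_a^b(\ell'_{\varepsilon,t}(x)-\ell'_t(x))\,dx\big|^2\le(b-a)\int_a^b E|\ell'_{\varepsilon,t}(x)-\ell'_t(x)|^2\,dx$, and for each fixed $x$ the integrand tends to $0$ by Theorem~\ref{th5.1}. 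To apply dominated convergence on $[a,b]$ it suffices to bound $E|\ell'_{\varepsilon,t}(x)|^2$ uniformly in $\varepsilon>0$ and $x\in{\mathbb R}$. This is obtained by rerunning the second-moment computation from the proof of Theorem~\ref{th3.1} at a general spatial point $x$: the variable $x$ enters only through a unimodular phase $e^{-i(\xi+\eta)x}$, so
$$
E\big|\ell'_{\varepsilon,t}(x)\big|^2\le\frac{1}{(2\pi)^2}\int_{\mathbb T}\!\int_{{\mathbb R}^2}|\xi\eta|\,e^{-\frac12[\lambda_{r,s}\xi^2+2\mu\xi\eta+\lambda_{r',s'}\eta^2]}\,d\xi\,d\eta\,dr\,ds\,dr'\,ds'\le C\int_{\mathbb T}\Big(\tfrac1{\rho^2}+\tfrac{\mu}{\rho^3}\Big)\,dr\,ds\,dr'\,ds' ,
$$
which is finite by Lemma~\ref{lem2.5} together with Young's inequality whenever $\frac1{H_1}+\frac1{H_2}>3$ (note $\rho^2=\lambda_{r,s}\lambda_{r',s'}-\mu^2$ is not increased by removing $\varepsilon$, and the exponent controlling $1/\rho^2$ is already integrable once $\frac1{H_1}+\frac1{H_2}>2$). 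Combining the two convergences yields $\ell_t(b)-\ell_t(a)=\int_a^b\ell'_t(x)\,dx$ in $L^2(\Omega)$, hence almost surely, for each fixed $a<b$ and $t\ge0$.

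To make this hold simultaneously for all $(a,b,t)$ on a single full-measure event, I would first impose the identity on a countable dense set of triples and then extend by continuity, using the jointly continuous modifications of $\ell_t(a)$ and $\ell'_t(a)$ supplied by Theorems~\ref{th5.2} and~\ref{th5.1}. On that event $x\mapsto\ell'_t(x)(\omega)$ is continuous, so differentiating $\ell_t(b)(\omega)-\ell_t(a)(\omega)=\int_a^b\ell'_t(x)(\omega)\,dx$ in $b$ shows that $a\mapsto\ell_t(a)(\omega)$ is differentiable with $\frac{\partial}{\partial a}\ell_t(a)(\omega)=\ell'_t(a)(\omega)$ for all $a$ and $t$. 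Finally, the representation $\frac{\partial}{\partial a}\ell_t(a)=\lim_{\varepsilon\to0}\ell'_{\varepsilon,t}(a)$ almost surely and in $L^p(\Omega)$ for all $p>0$ is then precisely Theorem~\ref{th5.1}.

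The only step I expect to require genuine care is the interchange of the limit $\varepsilon\to0$ with the spatial integral $\int_a^b(\cdot)\,dx$ — that is, the uniform-in-$(\varepsilon,x)$ second-moment estimate needed to justify dominated convergence on $[a,b]$; the rest is differentiation under the integral sign, invocation of Theorems~\ref{th5.1}--\ref{th5.2}, and a standard density-and-continuity extension.
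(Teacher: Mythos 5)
Your argument is correct, and its skeleton is the one the paper uses: establish $\partial_a\ell_{\varepsilon,t}(a)=\ell'_{\varepsilon,t}(a)$, integrate to get $\ell_{\varepsilon,t}(b)-\ell_{\varepsilon,t}(a)=\int_a^b\ell'_{\varepsilon,t}(x)\,dx$, pass to the limit $\varepsilon\to0$, and then differentiate the limiting identity using continuity of $x\mapsto\ell'_t(x)$. Where you diverge is in the justification of the limit passage. The paper does not introduce any new estimate at this point: it observes that the three-variable Kolmogorov bounds~\eqref{sec5-eq5.3} and~\eqref{sec5-eq5.3-1} (which include the $\varepsilon$ variable) already yield jointly continuous modifications in $(t,\varepsilon,a)$ up to $\varepsilon=0$, hence \emph{locally uniform} a.s.\ convergence of $\ell'_{t,\varepsilon}$ and $\ell_{t,\varepsilon}$, so the fundamental-theorem identity passes to the limit simultaneously for all $(a,b,t)$ with no dominated-convergence or density step. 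You instead work at fixed $(a,b,t)$ in $L^2$, which forces you to supply a uniform-in-$(x,\varepsilon)$ second-moment bound for $\ell'_{\varepsilon,t}(x)$; your bound is valid — after taking absolute values the Gaussian integral is controlled by $\sqrt{\lambda_{r,s}\lambda_{r',s'}}/\rho^3\le\rho^{-2}+\mu\rho^{-3}$ (since $\sqrt{\lambda_{r,s}\lambda_{r',s'}}\le\rho+\mu$ and $\mu\ge0$), and both terms are integrable over ${\mathbb T}$ under $\frac1{H_1}+\frac1{H_2}>3$ by Lemma~\ref{lem2.5} and Young's inequality — and the subsequent countable-dense-set plus continuity upgrade is standard. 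So both routes are sound: yours is more explicit about the interchange of limit and spatial integral at the cost of an extra moment computation, while the paper's exploitation of locally uniform convergence already encoded in Theorems~\ref{th5.1} and~\ref{th5.2} is shorter and gives the identity for all $(a,b,t)$ at once.
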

\begin{proof}
It is clear that $\ell'_{t,\varepsilon}(a)=\frac{\partial}{\partial a}\ell_{t,\varepsilon}(a)$ for any $\varepsilon,t>0,a\in {\mathbb R}$ and hence
$$
\ell_{t,\varepsilon}(a)=\ell_{t,\varepsilon}(b) +\int_b^a\ell'_{t,\varepsilon}(x)dx
$$
for all $a,b\in {\mathbb R}$ and $\varepsilon>0$. On the other hand, the estimates~\eqref{sec5-eq5.3} and~\eqref{sec5-eq5.3-1} assure that a locally uniform and hence continuous limits
\begin{equation}\label{sec5-eq5.3-0}
\ell'_{t}(a)=\lim_{\varepsilon\to 0}\ell'_{t,\varepsilon}(a)
\end{equation}
\begin{equation}\label{sec5-eq5.3-00}
\ell_{t}(a)=\lim_{\varepsilon\to 0}\ell_{t,\varepsilon}(a)
\end{equation}
hold. The locally uniform convergence~\eqref{sec5-eq5.3-0} and~\eqref{sec5-eq5.3-00} imply that
$$
\ell_{t}(a)=\ell_{t}(b) +\int_b^a\ell'_{t}(x)dx
$$
and therefore $\frac{\partial}{\partial a}\ell_{t}(a)=\ell'_{t}(a)$. This completes the proof.
\end{proof}
\begin{theorem}\label{th5.3}
Let $0<H_1,H_2<1$.

$(1)$ If $\frac1{H_1}+\frac1{H_2}>3$, we then have \begin{equation}\label{sec5-eq5.12}
\int_0^t ds\int_0^sf'(B^{H_1}_r-\tilde{B}^{H_2}_s)dr=-\int_{\mathbb R}f(a)\ell'_t(a)da
\end{equation}
for any $f\in C^1({\mathbb R})$ and $t\in [0,T]$.

$(2)$ If $f$ is continuous, then
\begin{equation}\label{sec5-eq5.222}
\int_0^t \int_0^sf(B^{H_1}_r-\tilde{B}^{H_2}_s)drds=\int_{\mathbb R}f(a)\ell_t(a)da
\end{equation}
for all $t\in [0,T]$.
\end{theorem}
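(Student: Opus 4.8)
The plan is to prove $(2)$ first, by smoothing the Dirac mass with the heat kernel $p_\varepsilon$ of~\eqref{sec3-eq3.2}, and then to obtain $(1)$ from $(2)$ by an integration by parts based on the preceding corollary, which identifies $\ell'_t(a)$ with $\frac{\partial}{\partial a}\ell_t(a)$ when $\frac1{H_1}+\frac1{H_2}>3$. A preliminary observation, used throughout, is that for each fixed $t$ the function $a\mapsto\ell_t(a)$ is a.s.\ supported in the compact interval $[-M_t,M_t]$, where $M_t:=\sup_{0\le r\le s\le t}|B^{H_1}_r-\tilde{B}^{H_2}_s|<\infty$ a.s.\ by the continuity of the two processes: if $|a|>M_t$ then $p_\varepsilon(B^{H_1}_r-\tilde{B}^{H_2}_s-a)\to 0$ boundedly as $\varepsilon\downarrow 0$, so $\ell_{\varepsilon,t}(a)\to 0$ by dominated convergence, whence $\ell_t(a)=0$. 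Because of this it suffices to prove $(2)$ for $f\in C_c(\mathbb R)$; a general continuous $f$ is handled by replacing it with $f\chi_R$ for a cutoff $\chi_R\equiv 1$ on $[-R,R]$ and letting $R\to\infty$, which changes neither side once $R>M_t$.

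For $f\in C_c(\mathbb R)$, Fubini's theorem (applicable since $f$ is bounded with compact support and $p_\varepsilon$ is integrable) together with the evenness of $p_\varepsilon$ gives
$$
\int_{\mathbb R}f(a)\,\ell_{\varepsilon,t}(a)\,da=\int_0^t ds\int_0^s (f*p_\varepsilon)(B^{H_1}_r-\tilde{B}^{H_2}_s)\,dr .
$$
I would then let $\varepsilon\downarrow 0$ in both members. On the right, $f*p_\varepsilon\to f$ uniformly on $\mathbb R$ by uniform continuity of $f$, so the right-hand side converges to $\int_0^t ds\int_0^s f(B^{H_1}_r-\tilde{B}^{H_2}_s)\,dr$. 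On the left, the estimate~\eqref{sec5-eq5.3-1} underlying Theorem~\ref{th5.2}, combined with the Kolmogorov--Chentsov criterion, shows that $(\varepsilon,a)\mapsto\ell_{\varepsilon,t}(a)$ admits a jointly continuous version; in particular $\sup_{a\in\operatorname{supp}f}|\ell_{\varepsilon,t}(a)-\ell_t(a)|\to 0$ a.s., so the left-hand side converges to $\int_{\mathbb R}f(a)\,\ell_t(a)\,da$. This proves~\eqref{sec5-eq5.222}, hence $(2)$.

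To obtain $(1)$, assume $\frac1{H_1}+\frac1{H_2}>3$ and take $f\in C^1(\mathbb R)$. Since $f'$ is continuous, applying $(2)$ to $f'$ yields
$$
\int_0^t ds\int_0^s f'(B^{H_1}_r-\tilde{B}^{H_2}_s)\,dr=\int_{\mathbb R}f'(a)\,\ell_t(a)\,da .
$$
By the preceding corollary $a\mapsto\ell_t(a)$ is $C^1$ with $\frac{\partial}{\partial a}\ell_t(a)=\ell'_t(a)$, and by the preliminary observation it is compactly supported, so an integration by parts gives $\int_{\mathbb R}f'(a)\,\ell_t(a)\,da=-\int_{\mathbb R}f(a)\,\ell'_t(a)\,da$ with vanishing boundary terms; this is~\eqref{sec5-eq5.12}. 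The steps I expect to demand the most care are the two genuinely non-pointwise ones: the a.s.\ locally uniform convergence $\ell_{\varepsilon,t}(\cdot)\to\ell_t(\cdot)$ used to pass to the limit in the spatial integral, which is precisely where the joint-continuity estimate behind Theorem~\ref{th5.2} enters, and the verification that $\ell_t(\cdot)$ has compact support, which legitimizes both the reduction to $f\in C_c(\mathbb R)$ and the disappearance of the boundary terms in the integration by parts.
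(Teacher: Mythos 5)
Your proposal is correct, and for part $(1)$ it takes a genuinely different route from the paper. The paper proves \eqref{sec5-eq5.12} directly at the mollified level: it writes $\int_{\mathbb R}f(a)\ell'_{t,\varepsilon}(a)da$, moves the derivative from $p'_\varepsilon$ onto $f$ by integrating by parts in $a$ \emph{before} letting $\varepsilon\to0$, so that the right-hand side becomes $\int_0^t ds\int_0^s (f'\ast p_\varepsilon)(B^{H_1}_r-\tilde B^{H_2}_s)dr$, and then passes to the limit using the locally uniform convergence $\ell'_{t,\varepsilon}\to\ell'_t$ from Theorem~\ref{th5.1} together with the compact support of $a\mapsto\ell'_t(a)$; statement $(2)$ is then dismissed with ``similarly.'' You instead prove $(2)$ first (which needs no integration by parts, only Fubini, $f\ast p_\varepsilon\to f$ uniformly, and the locally uniform convergence $\ell_{\varepsilon,t}\to\ell_t$ behind Theorem~\ref{th5.2}), and deduce $(1)$ by applying $(2)$ to $f'$ and integrating by parts \emph{after} the limit, invoking the preceding corollary $\ell'_t(a)=\frac{\partial}{\partial a}\ell_t(a)$ (valid under $\frac1{H_1}+\frac1{H_2}>3$) and the compact support of $\ell_t(\cdot)$ to kill the boundary terms; there is no circularity, since that corollary rests only on Theorems~\ref{th5.1} and~\ref{th5.2}. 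The paper's route has the merit of not needing the differentiability corollary at all for $(1)$, while yours buys a cleaner logical chain $(2)\Rightarrow(1)$ and actually supplies the detailed argument for $(2)$ that the paper omits; both rely on the same analytic inputs, namely the Kolmogorov-type estimates \eqref{sec5-eq5.3} and \eqref{sec5-eq5.3-1} and the a.s.\ compact support of the (derivative of the) intersection local time in the space variable.
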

\begin{proof}[Proof of Theorem~\ref{th5.3}]
By the locally $L^1$ convergence in~\eqref{sec5-eq5.3-0} and noting that both $B^{H_1}$ and $\tilde{B}^{H_2}$ are a.s. bounded on $[0,t]$, we see that the following manipulations hold:
\begin{align*}
\int_{\mathbb R}f(a)\ell'_t(a)da&=\lim_{\varepsilon\to 0}\int_{\mathbb R}f(a)\ell'_{t,\varepsilon}(a)da\\
&=-\lim_{\varepsilon\to 0}\int_{\mathbb R}f(a)da\int_0^t ds \int_0^sp'_\varepsilon(B^{H_1}_r-\tilde{B}^{H_2}_s-a)dr\\
&=-\lim_{\varepsilon\to 0}\int_0^t ds \int_0^sdr\int_{\mathbb R}f'(a)p_\varepsilon(B^{H_1}_r-\tilde{B}^{H_2}_s-a)da\\
&=-\lim_{\varepsilon\to 0}\int_0^t ds \int_0^sdr(f'\ast p_\varepsilon)(B^{H_1}_r-\tilde{B}^{H_2}_s)\\
&=-\int_0^t ds \int_0^sdrf'(B^{H_1}_r-\tilde{B}^{H_2}_s)
\end{align*}
for any $f\in C^1({\mathbb R})$ with compact support. Since both $B^{H_1}$ and $\tilde{B}^{H_2}$ are bounded a.s. we have that $a\mapsto \ell'_t(a)$ has compact support a.s. so that the above manipulations hold for all $C^1$-functions $f$.

Similarly, one can obtain the identity~\eqref{sec5-eq5.222}, and the theorem follows.
\end{proof}

\section{The hybrid quadratic covariation, case $H_1<\frac12$}\label{sec6}
In this section we throughout let $H_1\leq H_2$, and inspired by the occupation formula~\eqref{sec5-eq5.12}, our main aim is to obtain the following Bouleau-Yor type identity
\begin{equation}\label{sce7-eq7-0011}
[f(B^{H_1}-\tilde{B}^{H_2}),B^{H_1}]^{(HC)}_t=-\int_{\mathbb R}f(a)\ell'_t(a)da,\quad t\geq 0
\end{equation}
for some suitable Borel functions $f$. Recall that the
quadratic covariation $[f(B),B]$ of Brownian motion $B$ can be
characterized as
\begin{equation}\label{sec8-eq8.1}
[f(B),B]_t=-\int_{\mathbb R}f(a){\mathscr L}^{B}(da,t),
\end{equation}
where $f$ is locally square integrable and ${\mathscr L}^{B}(x,t)$
is the local time of Brownian motion. This is called the Bouleau-Yor
identity. More works for this can be found in Bouleau-Yor~\cite{Bouleau}, Eisenbaum~\cite{Eisen1,Eisen2}, Errami-Russo~\cite{Errami-Russo}, Feng--Zhao~\cite{Feng1,Feng3}, Follmer {\em et al}~\cite{Follmer}, Gradinaru {\em et al}~\cite{Grad2}, Moret-Nualart~\cite{Moret}, Rogers--Walsh~\cite{Rogers2}, Russo-Vallois~\cite{Russo-Vallois1,Russo-Vallois2}, Yan {\it et al}~\cite{Yan4,Yan3} and the references therein.

Recall that
$$
J_\varepsilon(H_1,H_2,t,f)=\frac1{\varepsilon^{2H_1}} \int_0^t ds\int_0^s \left\{f(B^{H_1}_{r+\varepsilon}-\tilde{B}^{H_2}_s) -f(B^{H_1}_{r}-\tilde{B}^{H_2}_s)\right\} \left(B^{H_1}_{r+\varepsilon}-B^{H_1}_r\right)dr
$$
for any Borel function $f$ and the HQC $[f(B^{H_1}-\tilde{B}^{H_2}),B^{H_1}]^{(HC)}_t$ is defined by
\begin{equation}\label{sec7-eq7.3}
[f(B^{H_1}-\tilde{B}^{H_2}),B^{H_1}]^{(HC)}_t=\lim_{\varepsilon\downarrow 0}J_\varepsilon(H_1,H_2,t,f),
\end{equation}
provided the limit exists in $L^1(\Omega)$.

\begin{corollary}\label{cor9.1}
If $f\in C^1({\mathbb R})$, we then have
\begin{equation}\label{sec7-eq7.4}
[f(B^{H_1}-\tilde{B}^{H_2}),B^{H_1}]^{(HC)}_t =\int_0^t ds\int_0^sf'(B^{H_1}_r-\tilde{B}^{H_2}_s)dr
\end{equation}
for all $t\in [0,T]$. In particular, we have
$$
[B^{H_1}-\tilde{B}^{H_2},B^{H_1}]^{(HC)}_t=\frac{1}{2}t^2.
$$
\end{corollary}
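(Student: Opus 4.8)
The plan is to reduce the claim to the occupation formula \eqref{sec5-eq5.12} established in Theorem~\ref{th5.3}. First I would observe that for $f\in C^1(\mathbb R)$ the increment $\Delta_\varepsilon f(B^{H_1}_r-\tilde B^{H_2}_s)=f(B^{H_1}_{r+\varepsilon}-\tilde B^{H_2}_s)-f(B^{H_1}_r-\tilde B^{H_2}_s)$ can be written by the fundamental theorem of calculus as
\[
\Delta_\varepsilon f(B^{H_1}_r-\tilde B^{H_2}_s)=\int_0^1 f'\bigl(B^{H_1}_r-\tilde B^{H_2}_s+\theta(B^{H_1}_{r+\varepsilon}-B^{H_1}_r)\bigr)\,d\theta\cdot(B^{H_1}_{r+\varepsilon}-B^{H_1}_r).
\]
Substituting into $J_\varepsilon(H_1,H_2,t,f)$ gives
\[
J_\varepsilon(H_1,H_2,t,f)=\frac1{\varepsilon^{2H_1}}\int_0^t ds\int_0^s\Bigl(\int_0^1 f'\bigl(B^{H_1}_r-\tilde B^{H_2}_s+\theta(B^{H_1}_{r+\varepsilon}-B^{H_1}_r)\bigr)d\theta\Bigr)\bigl(B^{H_1}_{r+\varepsilon}-B^{H_1}_r\bigr)^2\,dr.
\]
The key point is that $\varepsilon^{-2H_1}E[(B^{H_1}_{r+\varepsilon}-B^{H_1}_r)^2]=1$, and more precisely $\varepsilon^{-2H_1}(B^{H_1}_{r+\varepsilon}-B^{H_1}_r)^2\to 1$ in $L^1$ uniformly enough in $r$ (using stationarity of fBm increments and the ergodic-type averaging against $ds\,dr$); simultaneously $B^{H_1}_{r+\varepsilon}-B^{H_1}_r\to 0$ a.s. and the continuity of $f'$ together with a.s. boundedness of the paths on $[0,t]$ let one replace $f'(B^{H_1}_r-\tilde B^{H_2}_s+\theta(\cdots))$ by $f'(B^{H_1}_r-\tilde B^{H_2}_s)$ in the limit.

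Concretely, I would split $J_\varepsilon$ as
\[
J_\varepsilon=\int_0^t ds\int_0^s f'(B^{H_1}_r-\tilde B^{H_2}_s)\,\frac{(B^{H_1}_{r+\varepsilon}-B^{H_1}_r)^2}{\varepsilon^{2H_1}}\,dr + R_\varepsilon,
\]
where $R_\varepsilon$ collects the remainder coming from the modulus of continuity of $f'$ over an interval of length $|B^{H_1}_{r+\varepsilon}-B^{H_1}_r|$. For $R_\varepsilon$ one bounds $|R_\varepsilon|\le \int_0^t ds\int_0^s \omega_{f'}(|B^{H_1}_{r+\varepsilon}-B^{H_1}_r|)\,\varepsilon^{-2H_1}(B^{H_1}_{r+\varepsilon}-B^{H_1}_r)^2\,dr$ where $\omega_{f'}$ is the (local) modulus of continuity of $f'$; since fBm paths are a.s. uniformly continuous on $[0,t]$, $\omega_{f'}(|B^{H_1}_{r+\varepsilon}-B^{H_1}_r|)\to 0$ uniformly in $r$ a.s., and the factor $\varepsilon^{-2H_1}(B^{H_1}_{r+\varepsilon}-B^{H_1}_r)^2$ has $L^1$-bounded time-average, so $R_\varepsilon\to 0$ in $L^1(\Omega)$ by dominated convergence. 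For the main term I would argue that $\varepsilon^{-2H_1}(B^{H_1}_{r+\varepsilon}-B^{H_1}_r)^2$ converges to $1$ when integrated against the continuous weight $f'(B^{H_1}_r-\tilde B^{H_2}_s)\,dr\,ds$: one can compute the $L^2(\Omega)$ expansion using the Wiener chaos decomposition $\varepsilon^{-2H_1}(B^{H_1}_{r+\varepsilon}-B^{H_1}_r)^2=1+\varepsilon^{-2H_1}I_2(\mathbf 1_{[r,r+\varepsilon]}^{\otimes 2})$, and the double-integral of the second-chaos part against $ds\,dr$ tends to $0$ in $L^2$ because the covariance kernel of increments at spacing $\varepsilon$ is supported near the diagonal. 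This yields
\[
J_\varepsilon(H_1,H_2,t,f)\longrightarrow \int_0^t ds\int_0^s f'(B^{H_1}_r-\tilde B^{H_2}_s)\,dr
\]
in $L^1(\Omega)$, which is precisely \eqref{sec7-eq7.4}.

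The particular case is then immediate: taking $f(x)=x$, we have $f'\equiv 1$, so $[B^{H_1}-\tilde B^{H_2},B^{H_1}]^{(HC)}_t=\int_0^t ds\int_0^s dr=\int_0^t s\,ds=\tfrac12 t^2$.

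The main obstacle is controlling the second-chaos remainder in the convergence $\varepsilon^{-2H_1}(B^{H_1}_{r+\varepsilon}-B^{H_1}_r)^2\to 1$ uniformly enough to integrate against a random continuous weight depending on both $B^{H_1}$ and $\tilde B^{H_2}$. Since $H_1<\tfrac12$ the increments $B^{H_1}_{r+\varepsilon}-B^{H_1}_r$ are negatively correlated at disjoint scales, which actually helps: the relevant kernel $\langle \mathbf 1_{[r,r+\varepsilon]},\mathbf 1_{[r',r'+\varepsilon]}\rangle_{\mathcal H}$ decays rapidly once $|r-r'|>\varepsilon$, so the double time-integral of the chaos term is $O(\varepsilon^{2-2H_1})\to 0$. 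One must also verify that $f'(B^{H_1}_r-\tilde B^{H_2}_s)$ can be pulled out, i.e. that the product of the continuous weight with the chaos term still has vanishing $L^2$ norm after integration; this follows by a Cauchy--Schwarz argument using the uniform boundedness of $f'$ on the (a.s.~compact) range of the paths, or alternatively by a density argument approximating $f'$ by step functions in the supremum norm over the path range. Since $f\in C^1$, no integrability issues with $f$ arise, so the proof is essentially an exchange-of-limits argument made rigorous via the chaos estimate and the a.s.~uniform continuity of fBm paths.
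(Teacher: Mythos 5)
You should first be aware that the paper gives no proof of Corollary~\ref{cor9.1} at all: the identity is asserted as ``clear'' right after the definition of the HQC, so your proposal has to stand on its own. Your overall plan is the natural one and is essentially sound: Taylor/FTC expansion, a remainder controlled by the modulus of continuity of $f'$, and reduction of the main term to the statement that $A_\varepsilon(r):=\varepsilon^{-2H_1}(B^{H_1}_{r+\varepsilon}-B^{H_1}_r)^2-1$ averages to $0$ in time against the weight $f'(B^{H_1}_r-\tilde B^{H_2}_s)$; the special case $f(x)=x$ is handled correctly. The genuine gap is in the one step that carries all the weight, namely pulling the random, time-dependent weight out of the second-chaos term: both mechanisms you offer fail as stated. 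A Cauchy--Schwarz bound cannot work, because $\int_0^s A_\varepsilon(r)^2\,dr$ does \emph{not} tend to zero (pointwise $A_\varepsilon(r)$ has variance of order one); only the signed integral $\int_a^b A_\varepsilon(r)\,dr$ vanishes, i.e.\ the cancellation in $r$ coming from the near-diagonal decay of $E[A_\varepsilon(r)A_\varepsilon(r')]$ is essential and is destroyed once you put squares or absolute values inside the $dr$-integral. Likewise, approximating $f'$ by step functions in supremum norm on the path range does not help: $h(B^{H_1}_r-\tilde B^{H_2}_s)$ with $h$ a step function is still a genuinely time-dependent random weight, so nothing has been reduced to the deterministic-weight case.

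The fix is to discretize the weight in \emph{time}, not in space: after localizing on $\Omega_k=\{\sup_{[0,T]}|B^{H_1}|+\sup_{[0,T]}|\tilde B^{H_2}|\le k\}$ (as the paper does after Theorem~\ref{th10.1}), the map $(r,s)\mapsto f'(B^{H_1}_r-\tilde B^{H_2}_s)$ is a.s.\ uniformly continuous and bounded, so approximate it by a process piecewise constant on time blocks, pull out the (bounded random) block values, and apply your covariance/chaos estimate to $\int_{t_i}^{t_{i+1}}A_\varepsilon(r)\,dr\to0$ in $L^2(\Omega)$; the discretization error is handled by Cauchy--Schwarz against $\iint|A_\varepsilon(r)|\,dr\,ds$, which \emph{is} bounded in $L^2(\Omega)$ uniformly in $\varepsilon$. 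Alternatively, one can compute $E\bigl|J_\varepsilon(H_1,H_2,t,f)-\int_0^t\!\int_0^s f'(B^{H_1}_r-\tilde B^{H_2}_s)\,dr\,ds\bigr|^2$ directly via the duality relationship~\eqref{sec2-eq2.1}, in the spirit of the computations of Sections~\ref{sec6} and~\ref{sec10}, which also handles the correlation between the weight and the increments. Two smaller points: the HQC is defined as an $L^1(\Omega)$ limit, and for general $f\in C^1$ the weight is only bounded by a random constant, so without localization (or a growth assumption on $f'$) you only obtain convergence in probability / on each $\Omega_k$ — the paper is silent on this too, but you should say it; and your rate $O(\varepsilon^{2-2H_1})$ and the aside restricting to $H_1<\tfrac12$ are inessential, since the corollary is claimed for all $H_1\in(0,1)$ and the same covariance bounds give a vanishing estimate in every case.
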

As a direct consequence of the above corollary and~\eqref{sec5-eq5.12}, we have
\begin{equation}\label{sec7-eq7.5}
[f(B^{H_1}-\tilde{B}^{H_2}),B^{H_1}]^{(HC)}_t=-\int_{\mathbb R}f(a)\ell'_t(a)da
\end{equation}
for all $f\in C^1({\mathbb R})$ and $t\geq 0$. In order to prove the existence of the HQC, we decompose $J_\varepsilon(H_1,H_2,t,f)$ as follows:
\begin{align*}
J_\varepsilon(H_1,H_2,t,f)&=\frac1{\varepsilon^{2H_1}} \int_0^t ds\int_0^s f(B^{H_1}_{r+\varepsilon}-\tilde{B}^{H_2}_s) \left(B^{H_1}_{r+\varepsilon}-B^{H_1}_r\right)dr\\
&\qquad\qquad\qquad-\frac1{\varepsilon^{2H_1}}\int_0^t ds\int_0^s f(B^{H_1}_{r}-\tilde{B}^{H_2}_s)\left(B^{H_1}_{r+\varepsilon}-B^{H_1}_r\right)dr\\
&\equiv J^{+}_\varepsilon(H_1,H_2,t,f)-J^{-}_\varepsilon(H_1,H_2,t,f)
\end{align*}
for all $\varepsilon>0$ and $t\in [0,T]$, and consider the set
$$
{\mathscr H}=\{f\,:\,{\text { measurable
functions on ${\mathbb R}$ such that $\|f\|_{\mathscr
H}<\infty$}}\},
$$
where
\begin{align*}
\|f\|_{\mathscr H}^2:=\int_0^T \int_0^s\int_{\mathbb
R}|f(x)|^2e^{-\frac{x^2}{2(r^{2H_1}+s^{2H_2})}}\frac{dxdrds}{ \sqrt{2\pi(r^{2H_1}+s^{2H_2})}}.
\end{align*}
Then, ${\mathscr H}$ is a Banach space. In fact, we have
${\mathscr H}=L^2({\mathbb R},\mu(dx))$ with
$$
\mu(dx)=\left(\int_0^T\int_0^se^{-\frac{x^2}{2(r^{2H_1}+s^{2H_2})}} \frac{dr ds}{\sqrt{2\pi(r^{2H_1}+s^{2H_2})}}\right)dx
$$
and $\mu({\mathbb R})=\frac12T^2<\infty$, which implies that the set ${\mathscr E}$ of elementary functions of the form
$$
f_\triangle(x)=\sum_{i}f_{i}1_{(x_{i-1},x_{i}]}(x)
$$
is dense in ${\mathscr H}$, where $\{x_i,0\leq i\leq l\}$ is an
finite sequence of real numbers such that $x_i<x_{i+1}$.

Recall that the space ${\mathcal C }^\nu$ of $\nu$-H\"older continuous functions $f : [0, T] \to {\mathbb R}$, equipped with the norm
$$
\|f\|_{(\nu)}:=\|f\|_{\infty}+\sup_{x,y\in {\mathbb R}}\frac{|f(x)-f(y)|}{|x-y|^\nu}<\infty,
$$
where $\mu\in (0,1]$ and $\|f\|_{\infty}=\sup_{x\in \mathbb{R}}|f(x)|$.
\begin{corollary}
For all $H_1,H_2\in (0,1)$, we have ${\mathscr H}\supset {\mathcal C }^\nu$ for all $\nu\in (0,1]$.
\end{corollary}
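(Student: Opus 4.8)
The plan is to reduce everything to the two facts already recorded in the construction of $\mathscr{H}$: namely the identification $\mathscr{H} = L^2(\mathbb{R},\mu(dx))$ and the finiteness of the total mass, $\mu(\mathbb{R}) = \frac12 T^2 < \infty$ (the latter following by Tonelli's theorem, integrating the Gaussian density in $x$ first and then computing $\int_0^T\int_0^s dr\,ds = \frac12 T^2$). First I would observe that any $f \in {\mathcal C}^\nu$ is, by the very definition of the norm $\|\cdot\|_{(\nu)}$, bounded with $\|f\|_\infty \le \|f\|_{(\nu)} < \infty$, and continuous, hence Borel measurable; in particular $|f|^2$ is a bounded measurable function. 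Since $\mu$ is a finite measure, it then follows immediately that
$$
\|f\|_{\mathscr{H}}^2 = \int_{\mathbb{R}} |f(x)|^2\,\mu(dx) \le \|f\|_\infty^2\,\mu(\mathbb{R}) = \frac{T^2}{2}\,\|f\|_\infty^2 \le \frac{T^2}{2}\,\|f\|_{(\nu)}^2 < \infty,
$$
so that $f \in \mathscr{H}$. Since $\nu \in (0,1]$ was arbitrary, this gives ${\mathcal C}^\nu \subset \mathscr{H}$ for every such $\nu$, i.e. the asserted inclusion $\mathscr{H} \supset {\mathcal C}^\nu$; moreover the inclusion map is bounded, with $\|f\|_{\mathscr{H}} \le \tfrac{T}{\sqrt{2}}\,\|f\|_{(\nu)}$, so it is in particular continuous.

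There is essentially no obstacle here: all the substantive content was front-loaded into the identification $\mathscr{H} = L^2(\mathbb{R},\mu(dx))$ with $\mu(\mathbb{R}) = \frac12 T^2$, and the density of elementary functions plays no role in this particular statement. The only point meriting a word of care is that, with the norm $\|\cdot\|_{(\nu)}$ as defined, ${\mathcal C}^\nu$ consists of \emph{bounded} Hölder functions rather than functions with merely finite Hölder seminorm; the boundedness is exactly what is used in the estimate above, and once it is granted the corollary is immediate from the finiteness of $\mu$.
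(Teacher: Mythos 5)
Your argument is correct and is exactly the intended one: the paper states this corollary without proof, and the only content needed is that $\mathscr{H}=L^2(\mathbb{R},\mu(dx))$ with $\mu(\mathbb{R})=\frac12 T^2<\infty$, together with the fact that $\|\cdot\|_{(\nu)}$ dominates the sup-norm, so every $f\in{\mathcal C}^\nu$ is a bounded Borel function and hence lies in $L^2$ of the finite measure $\mu$, with $\|f\|_{\mathscr H}\le \tfrac{T}{\sqrt 2}\|f\|_{(\nu)}$. Nothing further is required.
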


For simplicity we let $T=1$ in the following discussions.
\begin{theorem}\label{th7.1}
Let $0<H_1<\frac12$ and $f\in {\mathscr H}$. Then, the HQC $[f(B^{H_1}-\tilde{B}^{H_2}),B^{H_1}]^{(HC)}$ exists in $L^2(\Omega)$ and
\begin{align}\label{sec7-eq7.6}
E\left|[f(B^{H_1}-\tilde{B}^{H_2}),B^{H_1}]^{(HC)}_t\right|^2\leq C_{H_1,H_2,T}\|f\|_{\mathscr H}^2
\end{align}
for all $t\in [0,1]$.
\end{theorem}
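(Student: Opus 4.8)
The plan is to work with the decomposition $J_\varepsilon(H_1,H_2,t,f)=J^{+}_\varepsilon(f)-J^{-}_\varepsilon(f)$ introduced just before the statement and to split the proof into two parts: \emph{(i)} a uniform bound $E\big|J^{\pm}_\varepsilon(f)\big|^2\le C_{H_1,H_2,T}\|f\|_{\mathscr H}^2$ valid for all $0<\varepsilon\le 1$ and $f\in{\mathscr H}$, and \emph{(ii)} convergence of $J_\varepsilon(f)$ in $L^2(\Omega)$ as $\varepsilon\downarrow 0$ for $f$ in a dense subclass of ${\mathscr H}$. Granting \emph{(i)}--\emph{(ii)} the theorem follows by a routine approximation: given $f\in{\mathscr H}$ choose $f_n\in{\mathscr E}$ with $f_n\to f$ in ${\mathscr H}=L^2({\mathbb R},\mu)$, and the triangle inequality $\|J_\varepsilon(f)-J_\delta(f)\|_{L^2}\le 2C_{H_1,H_2,T}^{1/2}\|f-f_n\|_{\mathscr H}+\|J_\varepsilon(f_n)-J_\delta(f_n)\|_{L^2}$ shows that $\{J_\varepsilon(f)\}_{\varepsilon>0}$ is Cauchy in $L^2(\Omega)$, and~\eqref{sec7-eq7.6} passes to the limit. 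By truncation it suffices to prove \emph{(i)} for bounded (in particular elementary) $f$.

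The heart of the matter is \emph{(i)}. Consider first $J^{-}_\varepsilon$ and set $X=B^{H_1}_r-\tilde B^{H_2}_s$, $Y=B^{H_1}_{r'}-\tilde B^{H_2}_{s'}$, $U=B^{H_1}_{r+\varepsilon}-B^{H_1}_r$, $V=B^{H_1}_{r'+\varepsilon}-B^{H_1}_{r'}$, so that $E|J^{-}_\varepsilon(f)|^2=\varepsilon^{-4H_1}\int_{\mathbb T}E\big[f(X)f(Y)\,UV\big]\,drdsdr'ds'$. Since $(X,Y,U,V)$ is jointly Gaussian, conditioning on $(X,Y)$ yields $E[UV\mid X,Y]=q_\varepsilon+R(X,Y)$, where $q_\varepsilon$ is the conditional covariance of $(U,V)$ and $R$ is a quadratic form whose coefficients are built from ${\rm Cov}(U,X),{\rm Cov}(U,Y),{\rm Cov}(V,X),{\rm Cov}(V,Y)$ and the inverse of the covariance matrix of $(X,Y)$ (of determinant $\rho^2$). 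Thus $E|J^{-}_\varepsilon(f)|^2$ splits into four-fold integrals of $E[|f(X)f(Y)|\,Q(X,Y)]$ with $Q\in\{1,X^2,XY,Y^2\}$, each weighted by a covariance prefactor and by $\varepsilon^{-4H_1}$. Each expectation is estimated by factoring the Gaussian density $p_{X,Y}(x,y)=p_X(x)p_{Y\mid X}(y\mid x)$, using $2|f(x)f(y)|\le|f(x)|^2+|f(y)|^2$ and the symmetry $(r,s)\leftrightarrow(r',s')$, so that the bare term ($Q\equiv1$) reduces, after the inner Gaussian integrations, to $\int_{\mathbb R}|f(x)|^2\tfrac{e^{-x^2/(2\lambda_{r,s})}}{\sqrt{2\pi\lambda_{r,s}}}\,dx$ times the $(r',s')$-integral of the prefactor; the polynomially weighted terms are handled the same way with extra Gaussian moments. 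It then remains to show that each $(r',s')$-integral of a covariance prefactor, multiplied by $\varepsilon^{-4H_1}$, stays bounded uniformly in $\varepsilon\in(0,1]$. This is where the hypotheses enter: Lemma~\ref{lem2.5} gives the lower bound for $\rho^2$, and $H_1<\tfrac12$ — so that $|x|^{2H_1}$ is concave and the increments of $B^{H_1}$ over disjoint intervals are negatively correlated — makes the off-diagonal pieces small (typically of order $\varepsilon^{1-2H_1}$) and keeps the remaining ones bounded; what survives is exactly $\int_0^t\!\int_0^s\!\int_{\mathbb R}|f(x)|^2\tfrac{e^{-x^2/(2\lambda_{r,s})}}{\sqrt{2\pi\lambda_{r,s}}}\,dx\,drds\le\|f\|_{\mathscr H}^2$. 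The term $J^{+}_\varepsilon$ is treated in the same manner with $\lambda_{r+\varepsilon,s}$ in place of $\lambda_{r,s}$; since $\varepsilon\le1$ one compares the shifted Gaussian kernel with the density of $\mu$ to recover a constant multiple of $\|f\|_{\mathscr H}^2$.

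For \emph{(ii)} I would take the dense subclass $C^1_c({\mathbb R})\subset{\mathscr H}$. For $f\in C^1_c$ the mean value theorem gives $J_\varepsilon(f)=\varepsilon^{-2H_1}\int_0^t\!\int_0^s f'(\theta_{r,s,\varepsilon})(B^{H_1}_{r+\varepsilon}-B^{H_1}_r)^2\,drds$ with $\theta_{r,s,\varepsilon}$ between $B^{H_1}_r-\tilde B^{H_2}_s$ and $B^{H_1}_{r+\varepsilon}-\tilde B^{H_2}_s$; combining the a.s. uniform continuity of $B^{H_1}$ on $[0,t+1]$ and the boundedness of $f'$ with the averaging estimate $E\big|\varepsilon^{-2H_1}\int_0^t\!\int_0^s g(r,s)(B^{H_1}_{r+\varepsilon}-B^{H_1}_r)^2\,drds-\int_0^t\!\int_0^s g(r,s)\,drds\big|^2\to0$ for bounded $g$ (where $H_1<\tfrac12$ is used once more), one obtains $J_\varepsilon(f)\to\int_0^t\!\int_0^s f'(B^{H_1}_r-\tilde B^{H_2}_s)\,drds$ in $L^2(\Omega)$. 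With the approximation step above this gives the $L^2$-existence of the HQC for every $f\in{\mathscr H}$ and the estimate~\eqref{sec7-eq7.6}. (Alternatively one can avoid $C^1_c$ and prove directly that $\{J_\varepsilon(f)\}$ is $L^2$-Cauchy for elementary $f$ by expanding $E|J_\varepsilon(f)-J_\delta(f)|^2$ in the spirit of the proof of Theorem~\ref{th3.1} and passing to the limit by dominated convergence, the dominating function being furnished by \emph{(i)}.) I expect the main obstacle to be \emph{(i)} for $J^{-}_\varepsilon$: the prefactor $\varepsilon^{-4H_1}$ is singular and the $X^2,XY,Y^2$ pieces of $E[UV\mid X,Y]$ are delicate, so one must use local nondeterminism (Lemma~\ref{lem2.5}) and the restriction $H_1<\tfrac12$ in tandem, bookkeeping the four-fold integral so that the $(r',s')$-integration exactly absorbs $\varepsilon^{-4H_1}$ and leaves the ${\mathscr H}$-norm.
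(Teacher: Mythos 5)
Your overall architecture coincides with the paper's: the decomposition $J_\varepsilon=J^{+}_\varepsilon-J^{-}_\varepsilon$, a uniform bound $E|J^{\pm}_\varepsilon(f)|^2\le C\|f\|_{\mathscr H}^2$, an $L^2$-Cauchy/convergence step on a dense subclass, and the approximation argument are exactly the paper's two statements \eqref{sec7-eq7-101}--\eqref{sec40-eq3-2}. Your device of conditioning on $(X,Y)$ to write $E[UV\mid X,Y]$ as a constant plus a quadratic form is a legitimate variant of the paper's integration by parts via the duality relationship~\eqref{sec2-eq2.1} (which moves the increment onto $f$ and produces the five terms $\Psi_1,\dots,\Psi_5$ with $f,f',f''$). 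The gap is that the decisive quantitative step of part \emph{(i)} is asserted rather than carried out. The paper's Part I is precisely the verification you defer: each $\Lambda_j$ is bounded using the $H<\tfrac12$ covariance inequalities of Lemma~\ref{lemA1-3} (in particular $|E[(B^H_t-B^H_s)(B^H_{t'}-B^H_{s'})]|\le (t-s)^{2H}(t'-s')^{2H}/(s-t')^{2H}$, which is what absorbs $\varepsilon^{-4H_1}$ after the $r'$-integration since $2H_1<1$), the local nondeterminism bound of Lemma~\ref{lem2.5}, and Lemma~\ref{lemA1-4}, whose point is to return factors $E[|f(X)|^2]^{1/2}E[|f(Y)|^2]^{1/2}$ carrying exactly the Gaussian weights that make up $\|f\|_{\mathscr H}$. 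In your conditioning route the quadratic-form pieces produce $E[|f(X)|^2X^2]$, $E[|f(X)|^2|XY|]$, etc., which are \emph{not} dominated by the ${\mathscr H}$-integrand as they stand; you must check that the small coefficients (products of increment covariances divided by $\rho^2$) compensate both the polynomial weights and the prefactor $\varepsilon^{-4H_1}$. That bookkeeping is the heart of the theorem, and the remark that the off-diagonal pieces are "typically of order $\varepsilon^{1-2H_1}$" is not accurate as a blanket statement — the surviving terms (e.g.\ the analogue of $\Lambda_5$, $\Lambda_1$) are not small, only uniformly bounded by $C\|f\|^2_{\mathscr H}$.

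Part \emph{(ii)} as written would fail. Your averaging estimate is stated for bounded deterministic $g$, but you apply it with $g=f'(\theta_{r,s,\varepsilon})$, a functional of the same fBm which is correlated with the squared increments; the naive bound $|E[f'(\theta)f'(\theta')\xi_r\xi_{r'}]|\le\|f'\|_\infty^2\,E|\xi_r\xi_{r'}|$ with $\xi_r=\varepsilon^{-2H_1}(B^{H_1}_{r+\varepsilon}-B^{H_1}_r)^2-1$ gives only boundedness, not convergence to zero, so identifying the limit for $f\in C^1_c$ requires an additional decorrelation or integration-by-parts argument that you have not supplied. The paper avoids this issue entirely: for Theorem~\ref{th7.1} it never identifies the limit, but proves the $L^2$-Cauchy property directly for elementary (then smooth, compactly supported) $f$, by showing that the differences $A_{k,l}(r,r',\varepsilon_1,2)$ of covariance prefactors, divided by $\varepsilon_1^{4H_1}\varepsilon_2^{2H_1}$, tend to zero pointwise (using the $H_1<\tfrac12$ increment estimates and the elementary inequality $b^\alpha-a^\alpha\le b^{\alpha-\beta}(b-a)^\beta$) and then invoking dominated convergence, with the dominating functions coming from the Part I estimates. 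Your parenthetical alternative is therefore the right fallback, but note that it requires exhibiting these pointwise limits of the prefactor differences — the dominating function alone does not settle the Cauchy property.
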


In order to prove the theorem we claim that the following two statements:
\begin{itemize}
\item for any $\varepsilon>0$, $t\in [0,1]$, and $f\in {\mathscr H}$, $J_\varepsilon^{\pm}(H_1,H_2,f,t)\in L^2(\Omega)$. That is,
\begin{align}\label{sec7-eq7-101}
&E\left|J_\varepsilon^{-}(H_1,H_2,f,t)\right|^2\leq C_{H_1,H_2,T}\|f\|_{\mathscr H}^2,\\  \label{sec7-eq7-102}
&E\left|J_\varepsilon^{+}(H_1,H_2,f,t)\right|^2\leq C_{H_1,H_2}\|f\|_{\mathscr H}^2.
\end{align}
\item $J_{\varepsilon}^{-}(H_1,H_2,f,t)$ and $J_{\varepsilon}^{+}(H_1,H_2,f,t)$ are two Cauchy's sequences in $L^2(\Omega)$ for all $t\in [0,1]$. That is,
\begin{equation}\label{sec40-eq3-1}
E\left|J_{\varepsilon_1}^{-}(H_1,H_2,f,t)-J_{\varepsilon_2}^{-} (H_1,H_2,f,t)\right|^2
\longrightarrow 0,
\end{equation}
and
\begin{equation}\label{sec40-eq3-2}
E\left|J_{\varepsilon_1}^{+}(H_1,H_2,f,t)-J_{\varepsilon_2}^{+} (H_1,H_2,f,t)\right|^2
\longrightarrow 0
\end{equation}
for all $t\in [0,1]$, as $\varepsilon_1,\varepsilon_2\downarrow 0$.
\end{itemize}

We split the proof of two statements into two parts and this is similar to Yan {\em et al.}~\cite{Yan7}. Let $T=1$ for simplicity and we also need the next lemmas which are some elementary calculations.

\begin{lemma}\label{lemA1-4}
Let $\lambda_{r,s},\mu$ and $\rho^2$ be defined in Section~\ref{sec2}. If $f\in C^\infty({\mathbb R})$ admit compact support. Then we have
\begin{equation}\label{app1-eq1.1}
\begin{split}
|E&\left[f'(B^{H_1}_r-\tilde{B}^{H_2}_s) f'(B^{H_1}_{r'}-\tilde{B}^{H_2}_{s'})\right]|\\
&\qquad\leq
\frac{\sqrt{\lambda_{r',s'}\lambda_{r,s}}}{\rho^2} \left(E\left[|f(B^{H_1}_r-\tilde{B}^{H_2}_s)|^2 \right]E\left[|f(B^{H_1}_{r'}-\tilde{B}^{H_2}_{s'})|^2\right]\right)^{1/2}
\end{split}
\end{equation}
and
\begin{equation}\label{app1-eq1.2}
\begin{split}
|E&\left[f''(B^{H_1}_r-\tilde{B}^{H_2}_s) f(B^{H_1}_{r'}-\tilde{B}^{H_2}_{s'})\right]|\\
&\qquad\leq \frac{3\lambda_{r',s'}}{\rho^2} \left(E\left[|f(B^{H_1}_r-\tilde{B}^{H_2}_s)|^2 \right]
E\left[|f(B^{H_1}_{r'}-\tilde{B}^{H_2}_{s'})|^2\right]\right)^{1/2}
\end{split}
\end{equation}
for all $(r,s,r',s')\in {\mathbb T}=\{0<r<s<t,\;0<r'<s'<t\}$.
\end{lemma}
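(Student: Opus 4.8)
The plan is to reduce both estimates to a single Gaussian-integration-by-parts computation for a two-dimensional Gaussian vector. Set $X=B^{H_1}_r-\tilde B^{H_2}_s$ and $Y=B^{H_1}_{r'}-\tilde B^{H_2}_{s'}$, so that $(X,Y)$ is centered Gaussian with covariance matrix $\Sigma=\begin{pmatrix}\lambda_{r,s}&\mu\\\mu&\lambda_{r',s'}\end{pmatrix}$ and $\det\Sigma=\lambda_{r,s}\lambda_{r',s'}-\mu^2=\rho^2>0$ by Lemma~\ref{lem2.5}. First I would write $E[f'(X)f'(Y)]$ using the joint density $p(x,y)$ of $(X,Y)$ and integrate by parts in both variables, moving the two derivatives off $f$ and onto $p$; this produces $E[f(X)f(Y)\,Q(X,Y)]$ where $Q=\partial_x\partial_y\log$-type combination coming from $\partial_x\partial_y p/p$. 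Explicitly $\partial_x\partial_y p(x,y)=p(x,y)\bigl((\Sigma^{-1}v)_1(\Sigma^{-1}v)_2-(\Sigma^{-1})_{12}\bigr)$ with $v=(x,y)^\top$, and $(\Sigma^{-1})_{12}=-\mu/\rho^2$. The key structural point is then a bound $|Q(x,y)|\le \tfrac{\sqrt{\lambda_{r,s}\lambda_{r',s'}}}{\rho^2}\cdot\Phi(x,y)$ where $\Phi$ is a weight that, after applying Cauchy--Schwarz against the density, contributes only harmless constants.

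The cleanest route, though, avoids even tracking $Q$ by instead using the Fourier/heat-kernel representation that the paper already favors: write $f$ via its action against $p_\varepsilon$ and use that $E[f'(X)f'(Y)]$ is a Gaussian expectation whose second-moment kernel factors through the inverse covariance. Concretely, one shows
$$
E[f'(X)f'(Y)]=\frac{1}{\rho^2}\Bigl(\mu\,E[f(X)f(Y)]+\text{(terms involving }f\cdot(X,Y)\text{-polynomials)}\Bigr),
$$
and each term on the right is estimated by Cauchy--Schwarz in the form $\bigl(E|f(X)|^2\bigr)^{1/2}\bigl(E|f(Y)|^2\bigr)^{1/2}$ times a factor controlled by $\sqrt{\lambda_{r,s}\lambda_{r',s'}}/\rho^2$ (using $|\mu|\le\sqrt{\lambda_{r,s}\lambda_{r',s'}}$ and the fact that Gaussian moments of $X/\sqrt{\lambda_{r,s}}$ and $Y/\sqrt{\lambda_{r',s'}}$ are absolute constants). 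This yields~\eqref{app1-eq1.1}. For~\eqref{app1-eq1.2} the argument is identical except one integrates by parts twice in the first variable only: $E[f''(X)f(Y)]=E[f(X)f(Y)\,R(X,Y)]$ with $R=\partial_x^2 p/p=p^{-1}p\bigl((\Sigma^{-1}v)_1^2-(\Sigma^{-1})_{11}\bigr)$, and $(\Sigma^{-1})_{11}=\lambda_{r',s'}/\rho^2$; bounding the resulting polynomial-in-$(X,Y)$ weight by Cauchy--Schwarz gives the factor $3\lambda_{r',s'}/\rho^2$, the constant $3$ absorbing the second-moment contributions.

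I would organize the write-up as: (i) reduce to compactly supported smooth $f$ and record $\Sigma,\Sigma^{-1},\det\Sigma$; (ii) perform the integrations by parts to get the exact identities for $E[f'(X)f'(Y)]$ and $E[f''(X)f(Y)]$ with explicit polynomial weights; (iii) apply Cauchy--Schwarz, using $E\bigl[|f(X)|^2(1+|X|/\sqrt{\lambda_{r,s}})^k\bigr]\le C_k\,E[|f(X)|^2]$-type bounds — which hold because the extra weight is bounded on the support after rescaling, or more carefully by Hölder with the Gaussian tail — to extract the stated constants. The main obstacle I anticipate is step (iii): controlling the cross terms $E[f(X)f(Y)\,X\,Y/\rho^2\text{-coefficients}]$ without picking up $\rho$-dependence beyond the single $1/\rho^2$ allowed. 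The resolution is to note each Gaussian polynomial factor normalizes against its \emph{own} variance ($\lambda_{r,s}$ or $\lambda_{r',s'}$), never against $\rho^2$, so the rescaled factors $(\Sigma^{-1}v)_i\sqrt{\lambda}$ have bounded even moments and the only surviving $\rho$-dependence is the explicit prefactor; the numerator $\sqrt{\lambda_{r,s}\lambda_{r',s'}}$ (resp. $\lambda_{r',s'}$) then emerges by tracking the scaling of $\Sigma^{-1}$. Everything is elementary once the identities in (ii) are in hand, which is why I would present (ii) in full and leave (iii) as routine Gaussian estimates.
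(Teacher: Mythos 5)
You should know at the outset that the paper never actually writes this proof down: Lemma~\ref{lemA1-4} is introduced as one of the ``elementary calculations'' in the spirit of Yan et al.~\cite{Yan7}, and the closest displayed argument is the one in the proof of Lemma~\ref{lem10.1}, where the derivatives are moved onto the joint density and the variables are rotated to a pair of independent standard Gaussians. So your proposal is judged on its own. Its skeleton --- integrate by parts against the density of $(X,Y)=(B^{H_1}_r-\tilde{B}^{H_2}_s,\,B^{H_1}_{r'}-\tilde{B}^{H_2}_{s'})$, obtaining the weights $W_1W_2-(\Sigma^{-1})_{12}$ and $W_1^2-(\Sigma^{-1})_{11}$ with $W_1=(\lambda_{r',s'}X-\mu Y)/\rho^2$, $W_2=(\lambda_{r,s}Y-\mu X)/\rho^2$, then Cauchy--Schwarz --- is the right one. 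But step (iii), which is exactly where the stated constants must come from, does not work as written. The inequality $E[|f(X)|^2(1+|X|/\sqrt{\lambda_{r,s}})^k]\le C_kE[|f(X)|^2]$ is false (take $f$ supported where $|x|/\sqrt{\lambda_{r,s}}$ is large; H\"older only trades it for an $L^{2p}$ norm of $f(X)$), and the ``rescaled factors'' do not have bounded moments: $E[W_1^2]=\lambda_{r',s'}/\rho^2$, so $\sqrt{\lambda_{r,s}}\,W_1$ has second moment $\lambda_{r,s}\lambda_{r',s'}/\rho^2$, which blows up exactly in the regime the lemma is needed.

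The missing idea is the orthogonality $E[XW_2]=E[YW_1]=0$, hence (joint Gaussianity) $W_2\perp X$ and $W_1\perp Y$; equivalently, the substitution used in Lemma~\ref{lem10.1}. For~\eqref{app1-eq1.2}, since $E[W_1^2]=\lambda_{r',s'}/\rho^2$ equals $(\Sigma^{-1})_{11}$, write $E[f''(X)f(Y)]=E\bigl[f(X)f(Y)\bigl(W_1^2-E[W_1^2]\bigr)\bigr]$ and apply Cauchy--Schwarz grouping the \emph{whole} weight with $f(Y)$: independence of $W_1$ and $Y$ gives $E\bigl[f(Y)^2(W_1^2-E[W_1^2])^2\bigr]=2(\lambda_{r',s'}/\rho^2)^2E[f(Y)^2]$, i.e.\ the bound with constant $\sqrt2\le3$. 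For~\eqref{app1-eq1.1}, pair $f(X)W_2$ with $f(Y)W_1$ (each weighted moment factorizes by the same independence) and bound the remaining term by $|\mu|/\rho^2\le\sqrt{\lambda_{r,s}\lambda_{r',s'}}/\rho^2$; this yields the stated right-hand side with an extra factor $2$, which is harmless for every application in the paper, but if you want the constant $1$ literally you need a sharper argument, e.g.\ a Hermite (chaos) expansion, where the $n$-th term is controlled by $n|\theta|^{n-1}(1-\theta^2)\le1$ with $\theta=\mu/\sqrt{\lambda_{r,s}\lambda_{r',s'}}$. Without the orthogonality (or the explicit rotation), the Cauchy--Schwarz step as you set it up cannot avoid weighting $f(X)$ by an unbounded function of $X$, and the proof does not close.
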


\begin{lemma}[Yan {\em et al.}~\cite{Yan7}]\label{lemA1-3}
Let $0<H<\frac12$.
\begin{itemize}
\item [(1)] For all $t>s>r>0$ we have
\begin{equation}\label{sec2-Lemma1-1}
\begin{split}
&\left|E\left[B^H_t(B^H_{t}-B^H_{s})\right]\right|\leq (t-s)^{2H},\\
&\left|E\left[B^H_t(B^H_{s}-B^H_{r})\right]\right|\leq (s-r)^{2H},\\
&\left|E\left[B^H_r(B^H_{t}-B^H_{s})\right]\right|\leq (t-s)^{2H},\\
&\left|E\left[B^H_s(B^H_{t}-B^H_{r})\right]\right|\leq (t-s)^{2H}.
\end{split}
\end{equation}

\item [(2)] For all $0<s'<t'<s<t$ we have
\begin{equation}\label{app1-eq1.3}
\left|E\left[(B^H_t-B^H_s)(B^H_{t'}-B^H_{s'})\right]\right|\leq
\frac{(t-s)^{2H}(t'-s')^{2H}}{(s-t')^{2H}}.
\end{equation}
\end{itemize}
\end{lemma}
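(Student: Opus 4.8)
The plan is to reduce every one of these estimates to the covariance identity $E[B^H_uB^H_v]=\tfrac12(u^{2H}+v^{2H}-|u-v|^{2H})$ together with three elementary properties of $\phi(x):=x^{2H}$ on $[0,\infty)$ that are available precisely because $0<2H<1$ when $H<\tfrac12$: (i) $\phi$ is nondecreasing with $\phi(0)=0$; (ii) $\phi$ is concave, hence subadditive, $\phi(x+y)\le\phi(x)+\phi(y)$; and (iii) $\phi$ has decreasing increments, i.e. for each $h>0$ the map $x\mapsto\phi(x+h)-\phi(x)$ is nonincreasing, equivalently $\phi''(x)=2H(2H-1)x^{2H-2}\le0$, and moreover $|\phi''(x)|=2H(1-2H)x^{2H-2}$ is itself nonincreasing in $x$. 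Everything then becomes bookkeeping with the signs of combinations of terms $w^{2H}$.

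For part~(1) I would expand each expectation by bilinearity. For instance $E[B^H_t(B^H_t-B^H_s)]=\tfrac12\big(t^{2H}-s^{2H}+(t-s)^{2H}\big)$, which is nonnegative and, since $t^{2H}-s^{2H}\le(t-s)^{2H}$ by (ii), is at most $(t-s)^{2H}$; similarly $E[B^H_t(B^H_s-B^H_r)]=\tfrac12\big((s^{2H}-r^{2H})+((t-r)^{2H}-(t-s)^{2H})\big)$ is a sum of two quantities each lying in $[0,(s-r)^{2H}]$ by (i)--(iii); and $E[B^H_r(B^H_t-B^H_s)]=\tfrac12\big((t^{2H}-s^{2H})-((t-r)^{2H}-(s-r)^{2H})\big)$ is a difference of two quantities lying in $[0,(t-s)^{2H}]$, hence bounded in modulus by $(t-s)^{2H}$. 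The last expectation $E[B^H_s(B^H_t-B^H_r)]$ is the only slightly delicate one: here I would split $B^H_t-B^H_r=(B^H_t-B^H_s)+(B^H_s-B^H_r)$, observe from the previous computations that $E[B^H_s(B^H_t-B^H_s)]\in[-\tfrac12(t-s)^{2H},0]$ and $E[B^H_s(B^H_s-B^H_r)]\in[0,(s-r)^{2H}]$, and then collapse the sum into the right-hand side by monotonicity and subadditivity --- this is the one estimate among the four where one must check carefully which increment dominates.

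Part~(2) is where the actual work lies. Expanding the product by bilinearity gives $E[(B^H_t-B^H_s)(B^H_{t'}-B^H_{s'})]=\tfrac12\big((t-s')^{2H}+(s-t')^{2H}-(t-t')^{2H}-(s-s')^{2H}\big)$; with $a=t-s$, $b=s-t'$, $c=t'-s'$ (all positive) this is $\tfrac12\Delta$, where $\Delta=(a+b+c)^{2H}-(a+b)^{2H}-(b+c)^{2H}+b^{2H}$ is a mixed second difference of $\phi$. The crucial step is the representation $\Delta=\int_0^a\!\!\int_0^c\phi''(b+u+w)\,dw\,du$, which at once shows $\Delta<0$ (the two increments are negatively correlated, as they must be for $H<\tfrac12$) and supplies two competing bounds: the Hessian bound $|\Delta|\le 2H(1-2H)\,ac\,b^{2H-2}$, obtained by estimating $|\phi''|$ on $[b,\infty)$ by its value at $b$; and the subadditive bound $|\Delta|\le(a+b)^{2H}-b^{2H}\le a^{2H}$, hence also $|\Delta|\le c^{2H}$ by the symmetry $a\leftrightarrow c$ of $\Delta$, hence $|\Delta|\le(ac)^{H}$. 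The goal is $|E[(B^H_t-B^H_s)(B^H_{t'}-B^H_{s'})]|=\tfrac12|\Delta|\le(ac)^{2H}b^{-2H}$, i.e. $|\Delta|\le 2(ac)^{2H}b^{-2H}$, and I would close it by a dichotomy on the gap $b$: if $b^2\le ac$ the subadditive bound already gives $|\Delta|\le(ac)^H\le(ac)^{2H}b^{-2H}$; if $b^2>ac$ the Hessian bound suffices, since $2H(1-2H)\,ac\,b^{2H-2}\le(ac)^{2H}b^{-2H}$ is equivalent to $[2H(1-2H)]^{1/(1-2H)}\,ac\le b^2$, which holds because $2H(1-2H)\le1$ forces $[2H(1-2H)]^{1/(1-2H)}\le1<b^2/(ac)$.

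The main obstacle is exactly this last point: neither elementary estimate on the mixed second difference $\Delta$ is strong enough on its own over the full range of $(a,b,c)$ --- the subadditive bound loses too much when the gap $b$ is large relative to the interval lengths, while the Hessian bound is wasteful when $b$ is small --- so one must retain both and splice them at the scale $b^2\sim ac$ (equivalently, interpolate between them with weight $\theta=\tfrac{1-2H}{1-H}$). All other ingredients are routine manipulations with the covariance of $B^H$ and the concavity of $x\mapsto x^{2H}$; alternatively, since the statement is quoted from Yan {\em et al.}~\cite{Yan7}, one may simply invoke it there.
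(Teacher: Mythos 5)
Your part~(2) is correct and complete, and the one real lacuna is in part~(1), in exactly the estimate you defer to ``monotonicity and subadditivity''. The fourth inequality of~\eqref{sec2-Lemma1-1} cannot be reached by any collapse of your two interval bounds, because as printed it is false: taking $r\downarrow 0$ and $t\downarrow s$ gives $E\left[B^H_s(B^H_t-B^H_r)\right]\to E\left[(B^H_s)^2\right]=s^{2H}$, while $(t-s)^{2H}\to 0$. What your own bookkeeping does give, from $E\left[B^H_s(B^H_t-B^H_s)\right]\in[-\tfrac12(t-s)^{2H},0]$ and $E\left[B^H_s(B^H_s-B^H_r)\right]\in[0,(s-r)^{2H}]$, is $\left|E\left[B^H_s(B^H_t-B^H_r)\right]\right|\le (t-r)^{2H}$, and that is evidently the intended statement --- compare the $H>\tfrac12$ analogue $\left|E\left[B^H_s(B^H_{t}-B^H_{r})\right]\right|\leq C_Hs^{2H-1}(t-r)$ in Lemma~\ref{lemA-1.2}, whose right-hand side also involves $t-r$, not $t-s$. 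So you should state and prove the corrected bound $(t-r)^{2H}$ (your interval argument already does this, and it is what the applications in Section~\ref{sec6} actually use, since there the overlapping case only requires a bound of order $\varepsilon^{2H_1}$), rather than promising to ``check which increment dominates'' toward a right-hand side that no argument can deliver. Your derivations of the first three inequalities of~\eqref{sec2-Lemma1-1} by expanding the covariance and using subadditivity of $x\mapsto x^{2H}$ are fine; the paper offers no proof of these (``simple exercises''), so there is nothing to compare there.

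For~\eqref{app1-eq1.3} your route is sound and essentially parallel to the paper's, differing only in how the two competing estimates are merged. The paper applies the mean value theorem twice to the second difference $G_{s,t}(t')-G_{s,t}(s')$, obtaining the curvature bound~\eqref{app1-eq1.4}, and then interpolates it against the trivial bound $(t-s)^H(t'-s')^H$ with exponent $\alpha=H/(1-H)$; you instead write the same mixed second difference as $\int_0^a\int_0^c\phi''(b+u+w)\,dw\,du$, extract the Hessian bound $2H(1-2H)\,ac\,b^{2H-2}$ and the subadditive bound $(ac)^H$, and splice them by the dichotomy $b^2\le ac$ versus $b^2>ac$. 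The case split is just the paper's geometric interpolation carried out at the crossover scale, and both arguments yield the constant-free inequality; your integral representation has the small advantage of exhibiting the sign of the covariance, while the paper's version simultaneously produces the $H>\tfrac12$ estimate~\eqref{app1-eq1.3-0} from the same computation.
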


\begin{proof}[Part I : Proof of the estimates~\eqref{sec7-eq7-101} and~\eqref{sec7-eq7-102}]

We have
\begin{align*}
E|J_\varepsilon^{-}&(H_1,H_2,f,t)|^2=\frac{1}{\varepsilon^{4H_1}}
\int_0^t\int_0^t dsds'\\
&\cdot\int_0^s\int_0^{s'}  E\left[f(B^{H_1}_{r}-\tilde{B}^{H_2}_s) f(B^{H_1}_{r'}-\tilde{B}^{H_2}_{s'}) (B^{H_1}_{r+\varepsilon}-B^{H_1}_r) (B^{H_1}_{r'+\varepsilon}-B^{H_1}_{r'})\right]drdr'
\end{align*}
for all $\varepsilon>0$ and $t\geq 0$. Now, let us estimate the express
$$
\Phi_{\varepsilon_1,\varepsilon_2}(r,s,r',s';H_1,H_2):
=E\left[f(B^{H_1}_r-\tilde{B}^{H_2}_s)f(B^{H_1}_{r'}-\tilde{B}^{H_2}_{s'}) (B^{H_1}_{r+\varepsilon_1}-B^{H_1}_r)(B^{H_1}_{r'+\varepsilon_2} -B^{H_1}_{r'})\right]
$$
for all $\varepsilon_1,\varepsilon_2>0$ and $s>r>0,s'>r'>0$. Thus, it is enough to obtain the estimates~\eqref{sec7-eq7-101} and~\eqref{sec7-eq7-102} for all $f\in {\mathscr E}$. By approximating we can assume that $f$ is an infinitely differentiable function with compact support. It follows from the duality relationship~\eqref{sec2-eq2.1} that
\begin{equation}\label{sec7-eq7.9}
\begin{split}
&\Phi_{\varepsilon_1,\varepsilon_2}(r,s,r',s';H_1,H_2)\\ &=E\left[f(B^{H_1}_r-\tilde{B}^{H_2}_s)f(B^{H_1}_{r'}-\tilde{B}^{H_2}_{s'}) (B^{H_1}_{r+\varepsilon_1}-B^{H_1}_r)\int_{r'}^{r'+\varepsilon_2}dB^{H_1}_l \right]\\
&=E\left[B^{H_1}_r(B^{H_1}_{r'+\varepsilon_2}-B^{H_1}_{r'})\right]E\left[
f'(B^{H_1}_r-\tilde{B}^{H_2}_s)f(B^{H_1}_{r'}-\tilde{B}^{H_2}_{s'}) (B^{H_1}_{r+\varepsilon_1}-B^{H_1}_r)\right]\\
&\;\; +E\left[B^{H_1}_{r'}(B^{H_1}_{r'+\varepsilon_2}-B^{H_1}_{r'})\right]E\left[
f(B^{H_1}_r-\tilde{B}^{H_2}_s)f'(B^{H_1}_{r'}-\tilde{B}^{H_2}_{s'}) (B^{H_1}_{r+\varepsilon_1}-B^{H_1}_r)\right]\\
&\;\;
+E\left[(B^{H_1}_{r+\varepsilon_1}-B^{H_1}_r)(B^{H_1}_{r'+\varepsilon_2} -B^{H_1}_{r'})\right] E\left[f(B^{H_1}_r-\tilde{B}^{H_2}_s)f(B^{H_1}_{r'}-\tilde{B}^{H_2}_{s'}) \right]\\
&=E\left[B^{H_1}_r(B^{H_1}_{r'+\varepsilon_2}-B^{H_1}_{r'})\right]
E\left[B^{H_1}_r(B^{H_1}_{r+\varepsilon_1}-B^{H_1}_{r})\right]
E\left[
f''(B^{H_1}_r-\tilde{B}^{H_2}_s)f(B^{H_1}_{r'}-\tilde{B}^{H_2}_{s'})\right]\\
&\;\;+E\left[B^{H_1}_r(B^{H_1}_{r'+\varepsilon_2}-B^{H_1}_{r'})\right]
E\left[B^{H_1}_{r'}(B^{H_1}_{r+\varepsilon_1}-B^{H_1}_{r})\right]
E\left[
f'(B^{H_1}_r-\tilde{B}^{H_2}_s)f'(B^{H_1}_{r'}-\tilde{B}^{H_2}_{s'})\right]\\
&\;\;+E\left[B^{H_1}_{r'}(B^{H_1}_{r'+\varepsilon_2}-B^{H_1}_{r'})\right] E\left[B^{H_1}_{r}(B^{H_1}_{r+\varepsilon_1}-B^{H_1}_{r})\right]
E\left[
f'(B^{H_1}_r-\tilde{B}^{H_2}_s)f'(B^{H_1}_{r'}-\tilde{B}^{H_2}_{s'})\right]\\
&\;\;+E\left[B^{H_1}_{r'}(B^{H_1}_{r'+\varepsilon_2}-B^{H_1}_{r'})\right] E\left[B^{H_1}_{r'}(B^{H_1}_{r+\varepsilon_1}-B^{H_1}_{r})\right]
E\left[f(B^{H_1}_r-\tilde{B}^{H_2}_s)f''(B^{H_1}_{r'}-\tilde{B}^{H_2}_{s'})\right]\\
&\;\;+E\left[(B^{H_1}_{r+\varepsilon_1}-B^{H_1}_r) (B^{H_1}_{r'+\varepsilon_2} -B^{H_1}_{r'})\right] E\left[f(B^{H_1}_r-\tilde{B}^{H_2}_s)f(B^{H_1}_{r'}-\tilde{B}^{H_2}_{s'}) \right]\\
&\equiv \sum_{j=1}^5\Psi_j(r,s,r',s',\varepsilon_1,\varepsilon_2)
\end{split}
\end{equation}
for all $s>r>0,s'>r'>0$ and $\varepsilon_1,\varepsilon_2>0$. In order to end the proof we claim to estimate
\begin{equation}\label{sec7-eq7.10}
\Lambda_j:=\frac{1}{\varepsilon^{4H_1}}\left|\int_{({\mathbb D}_{0,t})^2} \Psi_j(r,s,r',s',\varepsilon,\varepsilon)dr'ds'drds\right|,\quad j=1,2,3,4,5
\end{equation}
for all $\varepsilon>0$ small enough, where ${\mathbb D}_{0,t}=\{0\leq r<s\leq t\}$

For $j=5$, from the fact
\begin{align*}
|E&\left[(B^{H_1}_{r+\varepsilon}-B^{H_1}_r)(B^{H_1}_{r'+\varepsilon} -B^{H_1}_{r'})\right]|\leq \varepsilon^{2H_1}< \frac{\varepsilon^{4H_1}}{(r-r')^{2H_1}}
\end{align*}
for $0<r-r'<\varepsilon$ and the estimate~\eqref{app1-eq1.3} we have
\begin{align*}
\Lambda_5&\leq \frac{1}{\varepsilon^{4H_1}}\int_0^t\int_0^t dsds'\int_0^sdr\int_0^{s'}dr'\\
&\qquad\cdot \left(E[|f(B^{H_1}_r-\tilde{B}^{H_2}_s)|^2]+
E[|f(B^{H_1}_{r'}-\tilde{B}^{H_2}_{s'})|^2]\right)
\left|E[(B^{H_1}_{r+\varepsilon}-B^{H_1}_r)(B^{H_1}_{r'+\varepsilon} -B^{H_1}_{r'})]\right|\\
&\leq C \int_0^t ds\int_0^s
E\left[\left|f(B^{H_1}_r-\tilde{B}^{H_2}_s)\right|^2\right]dr\int_0^t ds'\int_0^{s'}\frac{dr'}{|r-r'|^{2H_1}}\\
&\leq C \|f\|_{\mathscr H}^2
\end{align*}
for all $0<\varepsilon\leq 1$ and $t\in [0,1]$.

For $j=1$, from Lemma~\ref{lemA1-3}, Lemma~\ref{lem2.5} and Lemma~\ref{lemA1-4} we have
\begin{align*}
\Lambda_1&\leq \int_{({\mathbb D}_{0,t})^2}
|E\left[
f''(B^{H_1}_r-\tilde{B}^{H_2}_s)f(B^{H_1}_{r'}-\tilde{B}^{H_2}_{s'})\right] |dr'ds'drds\\
&\leq \int_{({\mathbb D}_{0,t})^2} dr'ds'drds
\frac{3\lambda_{r',s'}}{\rho^2}
E\left[|f(B^{H_1}_{r}-\tilde{B}^{H_2}_{s})|^2\right]\\
&\qquad+\int_{({\mathbb D}_{0,t})^2} dr'ds'drds
\frac{3\lambda_{r',s'}}{\rho^2} E\left[|f(B^{H_1}_{r'}-\tilde{B}^{H_2}_{s'})|^2\right]\\
&\leq C \int_0^t ds\int_0^sdr E\left[|f(B^{H_1}_{r}-\tilde{B}^{H_2}_{s})|^2\right]
\int_0^t ds'\int_0^{s'}\frac{dr'}{|r-r'|^{2H_1}} \leq C \|f\|_{\mathscr H}^2
\end{align*}
for all $0<\varepsilon\leq 1$ and $t\in [0,1]$. Similarly, we can obtain the estimate~\eqref{sec7-eq7.10} for $j=2,3,4$, and the estimates~\eqref{sec7-eq7-101} follows.

Similarly one can prove the estimate~\eqref{sec7-eq7-102} and the first statement follows.
\end{proof}

\begin{proof}[Part II : Proof of the estimates~\eqref{sec40-eq3-1} and~\eqref{sec40-eq3-2}.]

Without loss of generality one may assume that $\varepsilon_1>\varepsilon_2$ and for $f\in {\mathscr H}$ we take the sequence $\{f_{\triangle,n}\}\subset {\mathscr E}$ such that $f_{\triangle,n}\to f$ in ${\mathscr H}$. Then we have
\begin{align*}
E|J_{\varepsilon_1}&(H_1,H_2,t,f)-J_{\varepsilon_2}(H_1,H_2,t,f)|^2\\
&\leq
3E|J_{\varepsilon_1}(H_1,H_2,t,f-f_{\triangle,n})|^2 +3E|J_{\varepsilon_2}(H_1,H_2,t,f-f_{\triangle,n})|^2\\
&\hspace{3cm}+3E|J_{\varepsilon_1}(H_1,H_2,t,f_{\triangle,n}) -J_{\varepsilon_2}(H_1,H_2,t,f_{\triangle,n})|^2\\
&\leq C_{H_1,H_2}\|f-f_{\triangle,n}\|_{\mathscr
H}^2+3E|J_{\varepsilon_1}(H_1,H_2,t,f_{\triangle,n}) -J_{\varepsilon_2}(H_1,H_2,t,f_{\triangle,n})|^2
\end{align*}
for all $\varepsilon_1,\varepsilon_2>0$ and all $n\geq 1$. Thus, it is enough to obtain the estimates~\eqref{sec40-eq3-1} and~\eqref{sec40-eq3-2} for all $f\in {\mathscr E}$. By approximating we can assume that $f$ is an infinitely differentiable function with compact support. It follows from~\eqref{sec7-eq7.9} that

\begin{align*}
E\bigl|J_{\varepsilon_1}^{-}(H_1,&H_2,f,t)-J_{\varepsilon_2}^{-} (H_1,H_2,f,t)\bigr|^2\\
&=\frac1{\varepsilon_1^{4H_1}}\int_{{\mathbb D}_{0,t}^2} E[f(B^{H_1}_r-\tilde{B}^{H_2}_s)f(B^{H_1}_{r'}-\tilde{B}^{H_2}_{s'})\\
&\hspace{4cm}\cdot(B^{H_1}_{r+\varepsilon_1}-B^{H_1}_r) (B^{H_1}_{r'+\varepsilon_1} -B^{H_1}_{r'})]dr'ds'drds\\
&\qquad-\frac2{\varepsilon_1^{2H_1}\varepsilon_2^{2H_1}}
\int_{{\mathbb D}_{0,t}^2} E[f(B^{H_1}_r-\tilde{B}^{H_2}_s)f(B^{H_1}_{r'}-\tilde{B}^{H_2}_{s'})\\
&\hspace{4cm}\cdot(B^{H_1}_{r+\varepsilon_1}-B^{H_1}_r) (B^{H_1}_{r'+\varepsilon_2} -B^{H_1}_{r'})]dr'ds'drds\\
&\qquad+\frac1{\varepsilon_2^{4H_1}}\int_{{\mathbb D}_{0,t}^2} E[f(B^{H_1}_r-\tilde{B}^{H_2}_s)f(B^{H_1}_{r'}-\tilde{B}^{H_2}_{s'})\\
&\hspace{4cm}\cdot(B^{H_1}_{r+\varepsilon_2}-B^{H_1}_r)(B^{H_1}_{r'+\varepsilon_2} -B^{H_1}_{r'})]dr'ds'drds\\
&=\frac1{\varepsilon_1^{4H_1}\varepsilon_2^{2H_1}}\int_{{\mathbb D}_{0,t}^2}\{\varepsilon_2^{2H_1} \Phi_{\varepsilon_1,\varepsilon_1}-\varepsilon_1^{2H_1}
\Phi_{\varepsilon_1,\varepsilon_2}\} dr'ds'drds\\
&\quad+
\frac1{\varepsilon_1^{2H_1}\varepsilon_2^{4H_1}}\int_{{\mathbb D}_{0,t}^2} \{\varepsilon_1^{2H_1}\Phi_{\varepsilon_2,\varepsilon_2}
-\varepsilon_2^{2H_1} \Phi_{\varepsilon_1,\varepsilon_2}\} dr'ds'drds
\end{align*}
with $\Phi_{\varepsilon_i,\varepsilon_j} :=\Phi_{\varepsilon_i,\varepsilon_j}(r,s,r',s';H_1,H_2)$.
Now, in order to end the proof we need to introduce the following convergence:
\begin{equation}\label{sec7-eq7.12}
\begin{split}
\frac1{\varepsilon_i^{4H_1}\varepsilon_j^{2H_1}} &\int_{{\mathbb D}_{0,t}^2} \{\varepsilon_j^{2H_1} \Phi_{\varepsilon_i,\varepsilon_i}-\varepsilon_i^{2H_1}
\Phi_{\varepsilon_1,\varepsilon_2}\}dr'ds'drds \longrightarrow 0
\end{split}
\end{equation}
with $i,j\in\{1,2\},i\neq j$, as $\varepsilon_1,\varepsilon_2\to 0$. By symmetry, we only need to show that this holds for $i=1,j=2$. Denote
\begin{align*}
A_{0,0}(r,r',\varepsilon,j):&=\varepsilon_j^{2H_1}
E\left[(B^{H_1}_{r+\varepsilon}-B^{H_1}_r) (B^{H_1}_{r'+\varepsilon} -B^{H_1}_{r'})\right]\\
&\qquad-\varepsilon^{2H_1}
E\left[(B^{H_1}_{r+\varepsilon_1}-B^{H_1}_r) (B^{H_1}_{r'+\varepsilon_2} -B^{H_1}_{r'})\right]\\
A_{0,2}(r,r',\varepsilon,j):&=\varepsilon_j^{2H_1} E\left[B^{H_1}_{r'}(B^{H_1}_{r'+\varepsilon}-B^{H_1}_{r'})\right] E\left[B^{H_1}_{r'}(B^{H_1}_{r+\varepsilon}-B^{H_1}_{r})\right]\\
&\qquad-\varepsilon^{2H_1} E\left[B^{H_1}_{r'}(B^{H_1}_{r'+\varepsilon_2}-B^{H_1}_{r'})\right] E\left[B^{H_1}_{r'}(B^{H_1}_{r+\varepsilon_1}-B^{H_1}_{r})\right]\\
A_{2,0}(r,r',\varepsilon,j):&=\varepsilon_j^{2H_1} E\left[B^{H_1}_r(B^{H_1}_{r'+\varepsilon}-B^{H_1}_{r'})\right]
E\left[B^{H_1}_r(B^{H_1}_{r+\varepsilon}-B^{H_1}_{r})\right]\\
&\qquad-\varepsilon^{2H_1} E\left[B^{H_1}_r(B^{H_1}_{r'+\varepsilon_2}-B^{H_1}_{r'})\right]
E\left[B^{H_1}_r(B^{H_1}_{r+\varepsilon_1}-B^{H_1}_{r})\right]\\
A_{1,1}(r,r',\varepsilon,j):&=\varepsilon_j^{2H_1}
E\left[B^{H_1}_{r'}(B^{H_1}_{r'+\varepsilon}-B^{H_1}_{r'})\right] E\left[B^{H_1}_{r}(B^{H_1}_{r+\varepsilon}-B^{H_1}_{r})\right]\\
&\qquad-\varepsilon^{2H_1}
E\left[B^{H_1}_{r'}(B^{H_1}_{r'+\varepsilon_2}-B^{H_1}_{r'})\right] E\left[B^{H_1}_{r}(B^{H_1}_{r+\varepsilon_1}-B^{H_1}_{r})\right]\\
&\qquad+\varepsilon_j^{2H_1}
E\left[B^{H_1}_r(B^{H_1}_{r'+\varepsilon}-B^{H_1}_{r'})\right]
E\left[B^{H_1}_{r'}(B^{H_1}_{r+\varepsilon}-B^{H_1}_{r})\right]\\
&\qquad-\varepsilon^{2H_1} E\left[B^{H_1}_r(B^{H_1}_{r'+\varepsilon_2}-B^{H_1}_{r'})\right]
E\left[B^{H_1}_{r'}(B^{H_1}_{r+\varepsilon_1}-B^{H_1}_{r})\right]
\end{align*}
with $j=1,2$. It follows that
\begin{align*}
\varepsilon_j^{2H_1} \Phi_{\varepsilon_i,\varepsilon_i}-\varepsilon_i^{2H_1}
\Phi_{\varepsilon_1,\varepsilon_2}&=A_{0,0}(r,r',\varepsilon_i,j)E\left[
f(B^{H_1}_r-\tilde{B}^{H_2}_s)f(B^{H_1}_{r'}-\tilde{B}^{H_2}_{s'})\right]\\
&+A_{0,2}(r,r',\varepsilon_i,j)E\left[
f(B^{H_1}_r-\tilde{B}^{H_2}_s)f''(B^{H_1}_{r'}-\tilde{B}^{H_2}_{s'})\right]\\
&+A_{2,0}(r,r',\varepsilon_i,j)
E\left[
f''(B^{H_1}_r-\tilde{B}^{H_2}_s)f(B^{H_1}_{r'}-\tilde{B}^{H_2}_{s'})\right]\\
&
+A_{1,1}(r,r',\varepsilon_i,j)E\left[
f'(B^{H_1}_r-\tilde{B}^{H_2}_s)f'(B^{H_1}_{r'}-\tilde{B}^{H_2}_{s'})\right]
\end{align*}
with $i,j\in \{1,2\}$ and $i\neq j$. Now, it is enough to prove
\begin{equation}\label{sec4-eq4.100}
\int_{{\mathbb D}_{0,t}^2}\frac{A_{k,l}(r,r',\varepsilon_1,2)}{\varepsilon_1^{4H_1} \varepsilon_2^{2H_1}} E\left[
f^{(k)}(B^{H_1}_r-\tilde{B}^{H_2}_s)f^{(l)} (B^{H_1}_{r'}-\tilde{B}^{H_2}_{s'})\right] dr'ds'drds\longrightarrow 0
\end{equation}
for all $k,l\in \{0,1,2\}$ and $k+l\in \{0,2\}$, as $\varepsilon_1,\varepsilon_2\to 0$. Without loss of generality one may assume that $\varepsilon_1>\varepsilon_2$.

For $k=l=0$ and $0<|r-r'|<\varepsilon_2$ we have
$$
|A_{0,0}(r,r',\varepsilon_1,2)|
\leq\varepsilon_2^{2H_1}\varepsilon_1^{2H_1}+\varepsilon_1^{3H_1} \varepsilon_2^{H_1}\leq
\frac{2\varepsilon_2^{2H_1}\varepsilon_1^{\gamma+2H_1}}{|r-r'|^{\gamma}}
$$
by Cauchy's inequality with $2H_1<\gamma\leq 1$. It follows from~\eqref{app1-eq1.4-1} with $\alpha=\frac{\gamma}{2-2H_1}$ that
\begin{align*}
\frac1{\varepsilon_1^{4H_1}\varepsilon_2^{2H_1}} |A_{0,0}(r,r',\varepsilon_1,2)|&\leq  C \left(\frac{1}{|r-r'-\varepsilon_1|^{\gamma}}
+\frac{1}{|r-r'|^{\gamma}}\right)\varepsilon_1^{\gamma-2H_1} \longrightarrow 0
\end{align*}
for all $s,r>0$ and $2H_1<\gamma\leq 1$, as $\varepsilon_1,\varepsilon_2\to 0$. Consequently, Lebesgue's dominated convergence theorem deduces the convergence~\eqref{sec4-eq4.100} with $k=l=0$ because
\begin{align*}
\int_{{\mathbb D}_{0,t}^2}&\left(\frac{1}{|r-r'-\varepsilon_1|^{2H_1}}
+\frac{1}{|r-r'|^{2H_1}}\right)\\
&\qquad\cdot|E\left[
f(B^{H_1}_r-\tilde{B}^{H_2}_s)f(B^{H_1}_{r'}-\tilde{B}^{H_2}_{s'})\right]| dr'ds'drds\leq C \|f\|^2_{\mathscr H}
\end{align*}
for all $\varepsilon_1>0$.

When $k+l=2$, by the fact
\begin{equation}\label{sec6-eq6-18}
b^\alpha-a^\alpha\leq b^{\alpha-\beta}(b-a)^{\beta}
\end{equation}
with $b>a>0$ and $0<\alpha\leq \beta\leq 1$, we have
\begin{equation}\label{sec6-eq6.19}
\begin{split}
|\varepsilon_2^{2H_1}&E[B^{H_1}_{r}(B^{H_1}_{r+\varepsilon_1}-B^{H_1}_r)] -\varepsilon_1^{2H_1}E[B^{H_1}_{r}(B^{H_1}_{r+\varepsilon_2}-B^{H_1}_r)]|\\
&=\left|\frac12\varepsilon_2^{2{H_1}}\left((r+\varepsilon_1)^{2{H_1}} -\varepsilon_1^{2H_1}-r^{2H_1}\right)-\frac12\varepsilon_1^{2H_1} \left((r+\varepsilon_2)^{2H_1}-\varepsilon_2^{2H_1}-r^{2H_1}\right)\right|\\
&=\frac12\left|\varepsilon_2^{2H_1}\left((r+\varepsilon_1)^{2H_1}-r^{2H_1} \right)
-\varepsilon_1^{2H_1}\left((r+\varepsilon_2)^{2H_1}-r^{2H_1}\right)\right|\\
&\leq \frac12\left((r+\varepsilon_1)^{2H_1-\beta}\varepsilon_2^{2H_1} \varepsilon_1^{\beta}+(r+\varepsilon_2)^{2H_1-\beta} \varepsilon_1^{2H_1}\varepsilon_2^{\beta}\right)\leq r^{2H_1-\beta}\varepsilon_2^{2H_1}\varepsilon_1^{\beta}
\end{split}
\end{equation}
for all $2H_1<\beta\leq 1$ and $r>0$. Similarly, by~\eqref{sec6-eq6-18} we also have
\begin{equation}\label{sec6-eq6.20}
\begin{split}
|E\left[B^{H_1}_{r}(B^{H_1}_{r'+\varepsilon}-B^{H_1}_{r'})\right]| &=\frac12\left|
(r'+\varepsilon)^{2H_1}-(r')^{2H_1}+|r-r'|^{2H_1}-|r-r'-\varepsilon|^{2H_1} \right|\\
&\leq \frac12\left((r')^{2H_1-\beta} +|r-r'|^{2H_1-\beta}\right)\varepsilon^{\beta}
\end{split}
\end{equation}
for all $|r-r'|>0,r,r'>0$ and $2H_1<\beta\leq 1$. Combining these with Lemma~\ref{lemA1-3}, we get
\begin{align*}
\frac1{\varepsilon_1^{4H_1}\varepsilon_2^{2H_1}} |A_{0,2}(r,r',\varepsilon_1,2)|
&=\frac1{\varepsilon_1^{4H_1}\varepsilon_2^{2H_1}}\left|E[B^{H_1}_{r'} (B^{H_1}_{r+\varepsilon_1}-B^{H_1}_{r})]\right|\\
&\qquad\cdot\left|\varepsilon_2^{2H_1} E[B^{H_1}_{r'}(B^{H_1}_{r'+\varepsilon_1}-B^{H_1}_{r'})] -\varepsilon^{2H_1}_1 E[B^{H_1}_{r'}(B^{H_1}_{r'+\varepsilon_2}-B^{H_1}_{r'})]\right|\\
&\leq r^{2H_1-\beta}\varepsilon_1^{\beta-2H_1}\longrightarrow 0\quad (\varepsilon_1,\varepsilon_2\to 0),\\
\frac1{\varepsilon_1^{4H_1}\varepsilon_2^{2H_1}} |A_{2,0}(r,r',\varepsilon_1,2)|
&=\frac1{\varepsilon_1^{4H_1}\varepsilon_2^{2H_1}} \left|E[B^{H_1}_r(B^{H_1}_{r+\varepsilon_1}-B^{H_1}_{r})]\right|\\
&\qquad\cdot\left|\varepsilon_2^{2H_1} E[B^{H_1}_r(B^{H_1}_{r'+\varepsilon_1}-B^{H_1}_{r'})] -\varepsilon^{2H_1}_1 E[B^{H_1}_r(B^{H_1}_{r'+\varepsilon_2}-B^{H_1}_{r'})]\right|\\
&\longrightarrow 0\quad (\varepsilon_1,\varepsilon_2\to 0)
\end{align*}
and
\begin{align*}
\frac1{\varepsilon_1^{4H_1}\varepsilon_2^{2H_1}} |A_{1,1}(r,r',\varepsilon_1,2)|
&\leq \frac1{\varepsilon_1^{4H_1}\varepsilon_2^{2H_1}} \left|E[B^{H_1}_{r}(B^{H_1}_{r+\varepsilon_1}-B^{H_1}_{r})]\right|\\ &\qquad\cdot\left|\varepsilon_2^{2H_1}
E[B^{H_1}_{r'}(B^{H_1}_{r'+\varepsilon_1}-B^{H_1}_{r'})] -\varepsilon^{2H_1}_1
E[B^{H_1}_{r'}(B^{H_1}_{r'+\varepsilon_2}-B^{H_1}_{r'})]\right|\\
&\quad+\frac1{\varepsilon_1^{4H_1}\varepsilon_2^{2H_1}} \left|E[B^{H_1}_{r'}(B^{H_1}_{r+\varepsilon_1}-B^{H_1}_{r})]\right|\\
&\qquad\cdot\left|\varepsilon_1^{2H_1}
E[B^{H_1}_r(B^{H_1}_{r'+\varepsilon_1}-B^{H_1}_{r'})] -\varepsilon^{2H_1}_1 E[B^{H_1}_r(B^{H_1}_{r'+\varepsilon_2}-B^{H_1}_{r'})]\right|\\
&\longrightarrow 0\quad (\varepsilon_1,\varepsilon_2\to 0)
\end{align*}
for all $r,r'>0$, which deduce the convergence~\eqref{sec4-eq4.100} for $k+l=2$ by Lebesgue's dominated convergence theorem because
$$
\frac1{\varepsilon_1^{4H_1}\varepsilon_2^{2H_1}} |A_{k,l}(r,r',\varepsilon_1,2)|\leq 2
$$
and
$$
\int_{{\mathbb D}_{0,t}^2} \left|E[f^{(k)}(B^{H_1}_{s})f^{(l)}(B^{H_1}_{r})] \right| dr'ds'drds\leq C \|f\|^2_{\mathscr H}
$$
for $k+l=2$ by Lemma~\ref{lemA1-3}, Lemma~\ref{lem2.5} and Lemma~\ref{lemA1-4}.

Consequently, the convergence~\eqref{sec7-eq7.12} holds for $i=1,j=2$ and~\eqref{sec40-eq3-1} follows. Similarly one can prove~\eqref{sec40-eq3-2}. Thus, we have established the second statement and Theorem~\ref{th7.1} follows.
\end{proof}

At the end of this section, we obtain the Bouleau-Yor type identity~\eqref{sce7-eq7-0011}.

\begin{lemma}\label{lem7.1}
Let $0<H_1<\frac12$ and let $f,f_1,f_2,\ldots\in {\mathscr H}$. If $f_n\to f$ in ${\mathscr
H}$, as $n$ tends to infinity, then we have
$$
[f_n(B^{H_1}-\tilde{B}^{H_2}),B^{H_1}]^{(HC)}_t\longrightarrow [f(B^{H_1}-\tilde{B}^{H_2}),B^{H_1}]^{(HC)}_t
$$
in $L^2$ as $n\to \infty$.
\end{lemma}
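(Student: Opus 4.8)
The plan is to obtain this continuity statement as a short consequence of the a priori bound~\eqref{sec7-eq7.6} established in Theorem~\ref{th7.1}, together with the linearity of the approximating functionals $J_\varepsilon$ in the integrand. First I would observe that for each fixed $\varepsilon>0$ and $t\in[0,1]$ the map $f\mapsto J_\varepsilon(H_1,H_2,t,f)$ is linear, since for every $r$ and $s$ the increment $f(B^{H_1}_{r+\varepsilon}-\tilde{B}^{H_2}_s)-f(B^{H_1}_{r}-\tilde{B}^{H_2}_s)$ depends linearly on $f$. By Theorem~\ref{th7.1} the limit defining the HQC exists in $L^2(\Omega)$ for every element of ${\mathscr H}$; applying this to $f$, to each $f_n$, and to each difference $f_n-f$ (all of which lie in the linear space ${\mathscr H}$), and using that $L^2$-limits commute with finite linear combinations, I would conclude that the limiting functional $g\mapsto [g(B^{H_1}-\tilde{B}^{H_2}),B^{H_1}]^{(HC)}_t$ is linear on ${\mathscr H}$. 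In particular,
$$
[f_n(B^{H_1}-\tilde{B}^{H_2}),B^{H_1}]^{(HC)}_t-[f(B^{H_1}-\tilde{B}^{H_2}),B^{H_1}]^{(HC)}_t =[(f_n-f)(B^{H_1}-\tilde{B}^{H_2}),B^{H_1}]^{(HC)}_t .
$$

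Next I would apply the estimate~\eqref{sec7-eq7.6} to $g_n:=f_n-f\in{\mathscr H}$, which yields
$$
E\left|[f_n(B^{H_1}-\tilde{B}^{H_2}),B^{H_1}]^{(HC)}_t-[f(B^{H_1}-\tilde{B}^{H_2}),B^{H_1}]^{(HC)}_t\right|^2 =E\left|[g_n(B^{H_1}-\tilde{B}^{H_2}),B^{H_1}]^{(HC)}_t\right|^2 \leq C_{H_1,H_2,T}\,\|f_n-f\|_{\mathscr H}^2 .
$$
Since $\|f_n-f\|_{\mathscr H}\to0$ by hypothesis, the right-hand side tends to $0$, which is precisely the asserted convergence in $L^2$.

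I do not expect any real obstacle here; the only point deserving a line of care is the passage to linearity of the limit functional, which must be phrased so as to avoid any $\infty-\infty$ ambiguity, and this is exactly guaranteed by the fact that Theorem~\ref{th7.1} ensures all three HQCs involved exist in $L^2(\Omega)$. If one preferred a self-contained argument not invoking linearity of the limit, one could instead mimic Part~II of the proof of Theorem~\ref{th7.1}: use $J_\varepsilon(H_1,H_2,t,f_n)-J_\varepsilon(H_1,H_2,t,f)=J_\varepsilon(H_1,H_2,t,f_n-f)$, bound $E|J_\varepsilon^{\pm}(H_1,H_2,f_n-f,t)|^2\leq C\|f_n-f\|_{\mathscr H}^2$ uniformly in $\varepsilon$ via~\eqref{sec7-eq7-101} and~\eqref{sec7-eq7-102}, and then let $\varepsilon\downarrow0$; but the linearity route above is shorter, and I would adopt it.
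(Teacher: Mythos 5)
Your proof is correct and is essentially the paper's own argument: the paper simply applies the a priori bound \eqref{sec7-eq7.6} of Theorem~\ref{th7.1} to the difference $f_n-f\in{\mathscr H}$, obtaining $E\left|[f_n(B^{H_1}-\tilde{B}^{H_2}),B^{H_1}]^{(HC)}_t-[f(B^{H_1}-\tilde{B}^{H_2}),B^{H_1}]^{(HC)}_t\right|^2\leq C_{H_1,H_2,T}\|f_n-f\|_{\mathscr H}^2\to 0$. You merely make explicit the linearity of $g\mapsto[g(B^{H_1}-\tilde{B}^{H_2}),B^{H_1}]^{(HC)}_t$ that the paper leaves implicit, which is a harmless (and welcome) addition, not a different route.
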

\begin{proof}
The convergence follows from
\begin{align*}
E&\left|[f_n(B^{H_1}-\tilde{B}^{H_2}),B^{H_1}]^{(HC)}_t -[f(B^{H_1}-\tilde{B}^{H_2}),B^{H_1}]^{(HC)}_t\right|^2\\
&\hspace{3cm}\leq
C_{H_1,H_2,T}\|f_n-f\|_{\mathscr H}^2\to 0,
\end{align*}
as $n$ tends to infinity.
\end{proof}

\begin{lemma}\label{lem7.2}
Let $0<H_1<\frac12$. For any $f_\triangle=\sum_jf_j1_{(a_{j-1},a_j]}\in {\mathscr E}$, we
define
$$
\int_{\mathbb R}f_\triangle(a)\ell'_t(a)da:=\sum_jf_j\left[\ell_t(a_j) -\ell_t(a_{j-1})\right].
$$
Then the integral is well-defined and
\begin{equation}\label{sec8-eq8.2}
\int_{\mathbb R}f_\triangle(a)\ell'_t(a)da =-[f_\triangle(B^{H_1}-\tilde{B}^{H_2}),B^{H_1}]^{(HC)}_t
\end{equation}
almost surely, for all $t\in [0,T]$.
\end{lemma}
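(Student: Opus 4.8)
The plan is to reduce the statement for step functions to the $C^1$ case covered by \eqref{sec7-eq7.5}, by a mollification argument. First I would record that the hypothesis $0<H_1<\frac12$ already forces $\frac1{H_1}+\frac1{H_2}>3$ (since $\frac1{H_1}>2$ and $\frac1{H_2}>1$), so that Theorems~\ref{th3.1}--\ref{th5.3} and their corollaries are available: $\ell'_t(a)=\frac{\partial}{\partial a}\ell_t(a)$ a.s., the map $a\mapsto\ell_t(a)$ is a.s.\ continuously differentiable with compact support, and \eqref{sec7-eq7.5} holds for every $f\in C^1(\mathbb R)$. For the well-definedness claim, the fundamental theorem of calculus applied to the $C^1$ function $a\mapsto\ell_t(a)$ gives $\ell_t(a_j)-\ell_t(a_{j-1})=\int_{a_{j-1}}^{a_j}\ell'_t(a)\,da$ a.s., hence
$$
\sum_j f_j\bigl[\ell_t(a_j)-\ell_t(a_{j-1})\bigr]=\int_{\mathbb R}f_\triangle(a)\ell'_t(a)\,da
$$
as a genuine, a.s.\ absolutely convergent Lebesgue integral; in particular this quantity does not depend on the representation chosen for $f_\triangle$, which is the content of ``well-defined''.

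For the identity \eqref{sec8-eq8.2} I would fix a mollifier $\phi_n\ge0$ with $\int\phi_n\,dx=1$ and $\operatorname{supp}\phi_n\subset[-\tfrac1n,\tfrac1n]$, and set $g_n:=f_\triangle*\phi_n$. Then $g_n\in C^\infty(\mathbb R)$, $\|g_n\|_\infty\le\|f_\triangle\|_\infty$, all the supports $\operatorname{supp}g_n$ lie in one fixed compact set $K$, and $g_n\to f_\triangle$ at every continuity point of $f_\triangle$, hence Lebesgue-a.e. Since the measure $\mu$ defining $\mathscr H$ is finite and absolutely continuous with respect to Lebesgue measure, the constant function $(2\|f_\triangle\|_\infty)^2$ dominates $|g_n-f_\triangle|^2$ in $L^1(\mu)$ while $g_n\to f_\triangle$ holds $\mu$-a.e., so dominated convergence gives $\|g_n-f_\triangle\|_{\mathscr H}\to0$; Lemma~\ref{lem7.1} then yields
$$
[g_n(B^{H_1}-\tilde B^{H_2}),B^{H_1}]^{(HC)}_t\longrightarrow[f_\triangle(B^{H_1}-\tilde B^{H_2}),B^{H_1}]^{(HC)}_t\quad\text{in }L^2(\Omega).
$$
On the other hand, each $g_n$ is $C^1$, so \eqref{sec7-eq7.5} gives $[g_n(B^{H_1}-\tilde B^{H_2}),B^{H_1}]^{(HC)}_t=-\int_{\mathbb R}g_n(a)\ell'_t(a)\,da$ for all $n$; and for a.e.\ $\omega$ the function $\ell'_t(\cdot)$ is continuous, hence bounded on $K$, so $|g_n(a)\ell'_t(a)|\le\|f_\triangle\|_\infty|\ell'_t(a)|\,\mathbf{1}_K(a)\in L^1(da)$ and dominated convergence (applied $\omega$ by $\omega$) gives $\int_{\mathbb R}g_n(a)\ell'_t(a)\,da\to\int_{\mathbb R}f_\triangle(a)\ell'_t(a)\,da$ a.s.

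Finally I would combine the two limits. Writing $X_n:=[g_n(B^{H_1}-\tilde B^{H_2}),B^{H_1}]^{(HC)}_t$ and $Y_n:=\int_{\mathbb R}g_n(a)\ell'_t(a)\,da$, one has $X_n=-Y_n$ a.s.\ for every $n$, $Y_n\to\int_{\mathbb R}f_\triangle(a)\ell'_t(a)\,da$ a.s., and $X_n\to[f_\triangle(B^{H_1}-\tilde B^{H_2}),B^{H_1}]^{(HC)}_t$ in $L^2(\Omega)$; hence
$$
[f_\triangle(B^{H_1}-\tilde B^{H_2}),B^{H_1}]^{(HC)}_t=-\int_{\mathbb R}f_\triangle(a)\ell'_t(a)\,da\quad\text{a.s.},
$$
which together with the first paragraph is exactly \eqref{sec8-eq8.2}. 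I expect the only delicate point to be that the two approximation procedures must be carried out with the \emph{same} sequence $g_n$: Lemma~\ref{lem7.1} requires $\mathscr H$-norm convergence, whereas identifying $\lim_n Y_n$ needs bounded Lebesgue-a.e.\ convergence; the mollified approximants are designed so that both hold simultaneously, the $\mathscr H$-convergence coming from the finiteness and absolute continuity of $\mu$.
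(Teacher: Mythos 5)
Your proposal is correct and follows essentially the same route as the paper's proof: mollify the step function, use Lemma~\ref{lem7.1} for $L^2$-convergence of the HQC, apply the $C^1$ identity \eqref{sec7-eq7.5} (via the occupation formula \eqref{sec5-eq5.12}, valid since $H_1<\tfrac12$ forces $\tfrac1{H_1}+\tfrac1{H_2}>3$) to the mollified functions, and pass to the limit, the only implicit detail being the extraction of an a.s.\ convergent subsequence to reconcile the $L^2$ and a.s.\ limits, which the paper states explicitly. The sole (cosmetic) difference is how the limit of $\int g_n(a)\ell'_t(a)\,da$ is identified: the paper integrates by parts so that the mollifiers act on $\ell_t$ and only continuity of $a\mapsto\ell_t(a)$ is invoked, whereas you use the corollary $\ell'_t=\partial_a\ell_t$ together with pathwise dominated convergence and the fundamental theorem of calculus; both are legitimate in this parameter range.
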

\begin{proof}
For the function $f_\triangle(u)=1_{(x,y]}(u)$ we define the sequence of smooth functions $f_n,\;n=1,2,\ldots$ by
\begin{align}
f_n(u)&=\int_{\mathbb
R}f_\triangle(u-v)\zeta_n(v)dv=\int_x^y\zeta_n(u-v)dv
\end{align}
for all $u\in \mathbb R$, where $\zeta_n,n\geq 1$ are the so-called
mollifiers given by
\begin{equation}\label{sec8-eq8.4}
\zeta_n(u):=n\zeta(nu),\qquad n=1,2,\ldots
\end{equation}
and
\begin{equation}
\zeta(u):=
\begin{cases}
ce^{\frac1{u^2-1}}, &{\text { $-1<u<1$}},\\
0, &{\text { otherwise}}
\end{cases}
\end{equation}
with a normalizing constant $c$ such that $\int_{\mathbb
R}\zeta(u)du=1$. Then $\{f_n\}\subset
C^{\infty}({\mathbb R})\cap {\mathscr H}$ and $f_n$ converges to $f$
in ${\mathscr H}$, as $n$ tends to infinity. It follows from  the occupation formula~\eqref{sec5-eq5.12} that
\begin{align*}
[f_n(B^{H_1}-\tilde{B}^{H_2}),& B^{H_1}]^{(HC)}_t
=\int_0^ts^{2H_2-1}ds\int_0^sf'_n(B^{H_1}_r-\tilde{B}^{H_2}_s)dr\\
&=-\int_{\mathbb R}f_n(a)\ell'_t(a)da=-\int_{\mathbb
R}\left(\int_x^y\zeta_n(a-u)du\right)\ell'_t(a)da\\
&=-\int_x^ydu\int_{\mathbb R}\zeta_n(a-u)\ell'_t(a)da=\int_x^ydu\int_{\mathbb R}\zeta'_n(a-u)\ell_t(a)da\\
&=\int_{\mathbb R}\ell_t(a)da\int_x^y\zeta'_n(a-u)du=-\int_{\mathbb R}\ell_t(a)\left(\zeta_n(a-y)-\zeta_n(a-x)\right)da\\
&=-\int_{\mathbb R}\ell_t(a)\zeta_n(a-y)da
+\int_{\mathbb R}\ell_t(a)\zeta_n(a-x)da\\
&\longrightarrow \ell_t(x)-\ell_t(y)
\end{align*}
almost surely, as $n\to \infty$, by the continuity of $a\mapsto
\ell_t(a)$. On the other hand, Lemma~\ref{lem7.1}
implies that there exists a subsequence $\{f_{n_k}\}$ such that
$$
[f_{n_k}(B^{H_1}-\tilde{B}^{H_2}),B^{H_1}]^{(HC)}_t\longrightarrow [f_\triangle(B^{H_1}-\tilde{B}^{H_2}),B^{H_1}]^{(HC)}_t
$$
for all $t\in [0,T]$, almost surely, as $k\to \infty$, which deduces
$$
[f_\triangle(B^{H_1}-\tilde{B}^{H_2}),B^{H_1}]^{(HC)}_t=\ell_t(x)-\ell_t(y)
$$
for all $t\in [0,T]$, almost surely. Thus, the identity
$$
\int_{\mathbb R}f_\triangle(a)\ell'_t(a)da=-[f_\triangle(B^{H_1}-\tilde{B}^{H_2}), B^{H_1}]^{(HC)}_t
$$
holds and the lemma follows from the linearity property.
\end{proof}

Thanks to the above lemma we can show that
\begin{equation} \lim_{n\to \infty}\int_{\mathbb
R}f_{\triangle,n}(a)\ell'_t(a)da =\lim_{n\to
\infty}\int_{\mathbb R}g_{\triangle,n}(a)\ell'_t(a)da=-[f(B^{H_1}-\tilde{B}^{H_2}), B^{H_1}]^{(HC)}_t
\end{equation}
in $L^2(\Omega)$ if
$$
\lim_{n\to \infty}f_{\triangle,n}(a)=\lim_{n\to
\infty}g_{\triangle,n}(a)=f(a)
$$
in ${\mathscr H}$, where $\{f_{\triangle,n}\},\{g_{\triangle,n}\}\subset
{\mathscr E}$. Thus, by the density of ${\mathscr
E}$ in ${\mathscr H}$ we can define
$$
\int_{\mathbb R}f(a)\ell'_t(a)da:=\lim_{n\to
\infty}\int_{\mathbb R}f_{\triangle,n}(a)\ell'_t(a)da
$$
for any $f\in {\mathscr H}$, where $\{f_{\triangle,n}\}\subset
{\mathscr E}$ and
$$
\lim_{n\to \infty}f_{\triangle,n}(a)=f(a)
$$
in ${\mathscr H}$. Thus, we have proved the following theorem.

\begin{theorem}\label{th7.2}
Let $0<H_1<\frac12$ and $f\in {\mathscr H}$. The integral
$$
\int_{\mathbb R}f(a)\ell'_t(a)da
$$
is well-defined for all $t\in [0,T]$, and the Bouleau-Yor type identity
\begin{align}\label{sec7-eq7.7}
[f(B^{H_1}-\tilde{B}^{H_2}),B^{H_1}]^{(HC)}_t=-\int_{\mathbb R}f(a)\ell'_t(a)da
\end{align}
holds for all $t\in [0,T]$.
\end{theorem}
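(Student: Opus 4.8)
The plan is to pass from elementary functions to a general $f\in{\mathscr H}$ by a density argument, with all the analytic content already supplied by Theorem~\ref{th7.1}, Lemma~\ref{lem7.1} and Lemma~\ref{lem7.2}. Recall that ${\mathscr E}$ is dense in ${\mathscr H}$, since ${\mathscr H}=L^2({\mathbb R},\mu(dx))$ with $\mu({\mathbb R})=\frac12T^2<\infty$. So, given $f\in{\mathscr H}$, I would fix a sequence $\{f_{\triangle,n}\}\subset{\mathscr E}$ with $f_{\triangle,n}\to f$ in ${\mathscr H}$. Lemma~\ref{lem7.2} applied to each $f_{\triangle,n}$ gives
$$
\int_{\mathbb R}f_{\triangle,n}(a)\ell'_t(a)\,da=-[f_{\triangle,n}(B^{H_1}-\tilde{B}^{H_2}),B^{H_1}]^{(HC)}_t
$$
almost surely, for every $t\in[0,T]$. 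By Lemma~\ref{lem7.1} the right-hand side converges in $L^2(\Omega)$ to $-[f(B^{H_1}-\tilde{B}^{H_2}),B^{H_1}]^{(HC)}_t$; hence the random variables $\int_{\mathbb R}f_{\triangle,n}(a)\ell'_t(a)\,da$ form a Cauchy sequence in $L^2(\Omega)$, and I would define $\int_{\mathbb R}f(a)\ell'_t(a)\,da$ to be its $L^2$-limit.

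The one point that must be checked carefully is that this definition does not depend on the chosen approximating sequence, i.e.\ that the integral is genuinely well-defined. If $\{f_{\triangle,n}\}$ and $\{g_{\triangle,n}\}$ are two sequences in ${\mathscr E}$ converging to $f$ in ${\mathscr H}$, then $f_{\triangle,n}-g_{\triangle,n}\in{\mathscr E}\subset{\mathscr H}$ and $\|f_{\triangle,n}-g_{\triangle,n}\|_{\mathscr H}\to0$. Linearity of the map $f\mapsto[f(B^{H_1}-\tilde{B}^{H_2}),B^{H_1}]^{(HC)}_t$ together with the $L^2$-estimate~\eqref{sec7-eq7.6} of Theorem~\ref{th7.1} gives
$$
E\left|[f_{\triangle,n}(B^{H_1}-\tilde{B}^{H_2}),B^{H_1}]^{(HC)}_t-[g_{\triangle,n}(B^{H_1}-\tilde{B}^{H_2}),B^{H_1}]^{(HC)}_t\right|^2\leq C_{H_1,H_2,T}\|f_{\triangle,n}-g_{\triangle,n}\|_{\mathscr H}^2\to0,
$$
so by Lemma~\ref{lem7.2} the two sequences of integrals have the same $L^2$-limit, and $\int_{\mathbb R}f(a)\ell'_t(a)\,da$ is well-defined for all $t\in[0,T]$.

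Finally, the Bouleau-Yor type identity~\eqref{sec7-eq7.7} follows by letting $n\to\infty$ in the displayed identity above: the left-hand side converges in $L^2$ to $\int_{\mathbb R}f(a)\ell'_t(a)\,da$ by construction, while the right-hand side converges in $L^2$ to $-[f(B^{H_1}-\tilde{B}^{H_2}),B^{H_1}]^{(HC)}_t$ by Lemma~\ref{lem7.1}; equality of $L^2$-limits yields the result. I do not expect a genuine obstacle in this last step: the real difficulty was already absorbed into Theorem~\ref{th7.1} (the $L^2$-bound $\|\cdot\|_{\mathscr H}$, which simultaneously powers the density argument and forces well-definedness) and into Lemma~\ref{lem7.2} (the elementary-function identity, itself derived from the occupation formula~\eqref{sec5-eq5.12} through a mollification of indicators). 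The only mild subtlety is bookkeeping between almost-sure and $L^2$ convergence: Lemma~\ref{lem7.2} holds a.s.\ whereas Lemma~\ref{lem7.1} provides $L^2$-convergence, so to combine them one passes to an almost-surely convergent subsequence, exactly as in the proof of Lemma~\ref{lem7.2}.
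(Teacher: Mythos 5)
Your proposal is correct and follows essentially the same route as the paper: the identity for elementary functions (Lemma~\ref{lem7.2}), the $L^2$-continuity of the HQC in $\|\cdot\|_{\mathscr H}$ (Lemma~\ref{lem7.1}, via Theorem~\ref{th7.1}), density of ${\mathscr E}$ in ${\mathscr H}$, well-definedness of the limit over different approximating sequences, and passage to the limit in the identity. The remark on reconciling a.s. and $L^2$ convergence is a reasonable extra precaution but introduces no new ideas beyond the paper's argument.
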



\section{The hybrid quadratic covariation, case $H_1\geq \frac12$}\label{sec10}
In this section we consider the HQC with $H_2\geq H_1\geq \frac12$ and obtain a similar Bouleau-Yor type identity. It is important to note that the method used in Section~\ref{sec6} is inefficacy for $H_1>\frac12$. Essentially, for $H_1>\frac12$ we have
$$
E\left[(B^{H_1}_{r+\varepsilon}-B^{H_1}_{r}) (B^{H_1}_{r'+\varepsilon}-B^{H_1}_{r'})\right]\sim \varepsilon^2\neq o\left(\varepsilon^{4H_1}\right)\quad (\varepsilon\to 0)
$$
for $r>r'+\varepsilon$ and the decomposition
\begin{align*}
J_\varepsilon(H_1,H_2,t,f)&=\frac1{\varepsilon^{2H_1}} \int_0^t ds\int_0^s f(B^{H_1}_{r+\varepsilon}-\tilde{B}^{H_2}_s) \left(B^{H_1}_{r+\varepsilon}-B^{H_1}_r\right)dr\\
&\qquad\qquad\qquad-\frac1{\varepsilon^{2H_1}}\int_0^t ds\int_0^s f(B^{H_1}_{r}-\tilde{B}^{H_2}_s)\left(B^{H_1}_{r+\varepsilon}-B^{H_1}_r\right)dr\\
&\equiv J^{+}_\varepsilon(H_1,H_2,t,f)-J^{-}_\varepsilon(H_1,H_2,t,f)
\end{align*}
does not bring any information because
$$
EJ^{\pm}_\varepsilon(H_1,H_2,t,f)\longrightarrow \infty\qquad (\varepsilon\to 0)
$$
for $H_1>\frac12$, in general. For example, for $f(x)=x$ we have
\begin{align*}
\frac1{\varepsilon^{2H_1}} \int_0^t ds\int_0^s
&E\left[(B^{H_1}_{r}-\tilde{B}^{H_2}_s)(B^{H_1}_{r+\varepsilon}-B^{H_1}_r)\right]dr\\
&=\frac1{\varepsilon^{2H_1}} \int_0^t ds\int_0^s
E\left[B^{H_1}_{r}(B^{H_1}_{r+\varepsilon}-B^{H_1}_r)\right]dr\\
&\longrightarrow \infty,
\end{align*}
as $\varepsilon\downarrow 0$. Thus, we must estimate $E|J_\varepsilon(H_1,H_2,t,f)|^2$ integrally when $H_1>\frac12$ in order to study the existence of the HQC $[f(B^{H_1}-\tilde{B}^{H_2}),B^{H_1}]^{(HC)}$, and moreover, we shall also use the Young integral
$$
\int_{\mathbb R}f(a)\ell_t(da).
$$

\begin{lemma}\label{lem10.4}
Let $H_1\geq \frac12$.
\begin{itemize}
\item If $\frac1{H_1}+\frac1{H_2}>3$, then for any $f\in {\mathcal C}^{(\nu)}$ with $\nu>0$, the Young integral
$$
\int_{\mathbb R}f(a)\ell_t(da)\equiv\int_{\mathbb R}f(a)\ell'_t(a)da
$$
is well-defined for all $t\geq 0$, and moreover, if $f,f_1,f_2,\ldots\in {\mathcal C }^{\nu}$ and $f_n\to f$ in ${\mathcal C }^{\nu}$, then
\begin{align}\label{sce10-eq10.3}
\int_{\mathbb R}f_n(a)\ell_t(da)\longrightarrow \int_{\mathbb R}f(a)\ell_t(da)
\end{align}
in $L^2(\Omega)$ as $n\to \infty$.
\item If $\frac1{H_1}+\frac1{H_2}\leq 3$, then for $f\in {\mathcal C}^\gamma$ with $\gamma>\frac12\left(3-\frac1{H_1}-\frac1{H_2}\right)$, the Young integral
$$
\int_{\mathbb R}f(a)\ell_t(da)
$$
is well-defined for all $t\geq 0$, and moreover, if $f,f_1,f_2,\ldots\in {\mathcal C }^{\gamma}$ and $f_n\to f$ in ${\mathcal C }^{\gamma}$, then the convergence~\eqref{sce10-eq10.3} holds in $L^2(\Omega)$.
\end{itemize}
\end{lemma}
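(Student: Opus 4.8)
The plan is to deduce both assertions from the classical Young integration theorem, fed by two probabilistic ingredients: a moment bound on the H\"older seminorm (in the space variable) of $a\mapsto\ell_t(a)$, which Theorem~\ref{th5.2} makes available, and a moment bound on the diameter of the random support of $\ell_t(\cdot)$, which comes from Gaussian concentration.

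\emph{Existence.} Since $B^{H_1}$ and $\tilde B^{H_2}$ are a.s. bounded on $[0,t]$, the function $a\mapsto\ell_t(a)$ vanishes (a.s.) outside the compact random interval $[m_t,M_t]$ with
\[
m_t=\min_{0\le r\le s\le t}\bigl(B^{H_1}_r-\tilde B^{H_2}_s\bigr),\qquad M_t=\max_{0\le r\le s\le t}\bigl(B^{H_1}_r-\tilde B^{H_2}_s\bigr),
\]
and by continuity $\ell_t(m_t)=\ell_t(M_t)=0$. When $\frac1{H_1}+\frac1{H_2}>3$, Theorem~\ref{th5.1} and the Corollary following Theorem~\ref{th5.2} give that $a\mapsto\ell_t(a)$ is a.s. of class $C^1$, with $\ell'_t(\cdot)$ continuous and compactly supported; hence for any continuous $f$ the Lebesgue integral $\int_{\mathbb R}f(a)\ell'_t(a)\,da$ is finite and equals the Riemann--Stieltjes (Young) integral $\int_{\mathbb R}f(a)\ell_t(da)$, so the object is well defined for all $f\in\mathcal C^\nu$, $\nu>0$. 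When $\frac1{H_1}+\frac1{H_2}\le3$, Theorem~\ref{th5.2} gives that $a\mapsto\ell_t(a)$ is a.s. $\gamma_0$-H\"older for every $\gamma_0<\lambda_0:=\frac1{2H_1}+\frac1{2H_2}-\frac12$ (which is $\le1$ in this regime). For $f\in\mathcal C^\gamma$ with $\gamma>\frac12\bigl(3-\frac1{H_1}-\frac1{H_2}\bigr)=1-\lambda_0$, choose $\gamma_0\in(1-\gamma,\lambda_0)$ so that $\gamma+\gamma_0>1$; then Young's theorem defines $\int_{[c,d]}f\,d\ell_t$ on every compact interval, and by additivity, taking $[c,d]\supseteq[m_t,M_t]$ and using $\ell_t\equiv0$ outside $[m_t,M_t]$, one gets an unambiguous value $\int_{\mathbb R}f(a)\ell_t(da)$.

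\emph{Core estimate.} In either regime, for $g$ in the corresponding H\"older space,
\[
E\Bigl|\int_{\mathbb R}g(a)\,\ell_t(da)\Bigr|^2\le C_{H_1,H_2,t}\,\|g\|_{(\gamma)}^2 .
\]
Indeed, apply the standard Young estimate on $[m_t,M_t]$: since $\int_{\mathbb R}g\,d\ell_t=\int_{m_t}^{M_t}g\,d\ell_t$ and $\ell_t(M_t)-\ell_t(m_t)=0$,
\[
\Bigl|\int_{\mathbb R}g\,d\ell_t\Bigr|=\Bigl|\int_{m_t}^{M_t}g\,d\ell_t-g(m_t)\bigl(\ell_t(M_t)-\ell_t(m_t)\bigr)\Bigr|\le C_{\gamma,\gamma_0}\,\|g\|_{(\gamma)}\,\|\ell_t\|_{\gamma_0}\,(M_t-m_t)^{\gamma+\gamma_0},
\]
where $\|\ell_t\|_{\gamma_0}$ is the $\gamma_0$-H\"older seminorm of $\ell_t(\cdot)$. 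By Cauchy--Schwarz,
\[
E\Bigl|\int_{\mathbb R}g\,d\ell_t\Bigr|^2\le C\,\|g\|_{(\gamma)}^2\,\bigl(E\|\ell_t\|_{\gamma_0}^4\bigr)^{1/2}\,\bigl(E(M_t-m_t)^{4(\gamma+\gamma_0)}\bigr)^{1/2},
\]
so it remains to check that both expectations are finite. The second is finite because $m_t,M_t$ are the infimum and supremum of a centred a.s.\ continuous Gaussian field over a compact index set, hence have Gaussian tails by the Borell--TIS (Fernique) inequality. For the first I would apply a Garsia--Rodemich--Rumsey type inequality on unit subintervals to the uniform-in-$a,a'$ modulus bound $E|\ell_t(a')-\ell_t(a)|^n\le C_n|a'-a|^{n\gamma_0}$ (valid for every $\gamma_0<\lambda_0$ and all $n$, as in the proof of Theorem~\ref{th5.2}), obtaining $\sup_k E\|\ell_t\|_{\gamma_0;[k,k+2]}^{p}<\infty$ uniformly over $k\in\mathbb Z$, and then patch over the $O(1+|m_t|+|M_t|)$ blocks meeting the support of $\ell_t$, using $\ell_t\equiv0$ elsewhere and the Gaussian tails of $m_t,M_t$ in the union bound; $E\|\ell_t\|_\infty^p<\infty$ is handled the same way. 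In the regime $\frac1{H_1}+\frac1{H_2}>3$ one may instead take $\gamma_0=1$, reducing the above to $|\int g\,d\ell_t|\le\|g\|_\infty\|\ell'_t\|_\infty(M_t-m_t)$ with $E\|\ell'_t\|_\infty^4<\infty$ obtained analogously from Theorem~\ref{th5.1}.

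\emph{Convergence and main obstacle.} Granting the core estimate, the convergence statement is immediate from the linearity of the Young integral: with $f_n\to f$ in the relevant H\"older norm,
\[
E\Bigl|\int_{\mathbb R}f_n(a)\ell_t(da)-\int_{\mathbb R}f(a)\ell_t(da)\Bigr|^2=E\Bigl|\int_{\mathbb R}(f_n-f)(a)\ell_t(da)\Bigr|^2\le C_{H_1,H_2,t}\,\|f_n-f\|_{(\gamma)}^2\longrightarrow0 .
\]
I expect the main obstacle to be the moment bound $E\|\ell_t\|_{\gamma_0}^4<\infty$ for the \emph{globally} defined H\"older seminorm: Theorem~\ref{th5.2} only supplies an $L^n$ modulus of continuity and a.s.\ continuity, so one must combine a GRR/Kolmogorov argument on bounded blocks with careful control of the random support $[m_t,M_t]$, and one must keep the exact H\"older exponent $\lambda_0=\frac1{2H_1}+\frac1{2H_2}-\frac12$ so that the admissible range of the integrand's exponent matches $\gamma>\frac12\bigl(3-\frac1{H_1}-\frac1{H_2}\bigr)$ exactly.
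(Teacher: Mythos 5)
Your proposal is correct and follows exactly the route the paper intends: the paper disposes of this lemma in one line ("it follows from the H\"older continuity of $a\mapsto\ell_t(a)$", i.e.\ Theorem~\ref{th5.2}, together with Young integration), and your exponent bookkeeping $\gamma+\gamma_0>1$ with $\gamma_0<\frac{1}{2H_1}+\frac{1}{2H_2}-\frac12$ reproduces precisely the thresholds in the statement. The only difference is that you supply the details the paper leaves implicit — compact support of $\ell_t(\cdot)$, the Young--Loeve bound, and the moment estimates (GRR/Kolmogorov on blocks plus Gaussian tails for the support) needed to upgrade the pathwise bound to the $L^2(\Omega)$ convergence — and these details are sound.
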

The above lemma follows from the H\"older continuity of $a\mapsto \ell_t(a)$. For more aspects on Young integration we refer to Dudley-Norvai\v{s}a~\cite{Dudley} and Young~\cite{Young}. Denote
$$
\Upsilon_{\varepsilon}(H_1,H_2):
=E\left[\Delta_{\varepsilon} f(B^{H_1}_{r}-\tilde{B}^{H_2}_{s})\Delta_{\varepsilon} f(B^{H_1}_{r'}-\tilde{B}^{H_2}_{s'}) (B^{H_1}_{r+\varepsilon}-B^{H_1}_r)(B^{H_1}_{r'+\varepsilon} -B^{H_1}_{r'})\right]
$$
for all $\varepsilon>0$, $s,r,s',r'>0$ and Borel functions $f\in {\mathcal C}^\nu$ with $\nu>0$, where
$$
\Delta_\varepsilon f(B^{H_1}_{r}-\tilde{B}^{H_2}_{s})=f(B^{H_1}_{r+\varepsilon}-\tilde{B}^{H_2}_{s}) -f(B^{H_1}_{r}-\tilde{B}^{H_2}_{s}).
$$
Then we have
\begin{align*}
E|J_\varepsilon(H_1,H_2,t,f)|^2=\frac1{\varepsilon^{4H_1}} \int_0^t\int_0^t \int_0^s\int_0^{s'}\Upsilon_{\varepsilon}(H_1,H_2)dr'drds'ds
\end{align*}
for  all $\varepsilon>0$ and $t\in [0,T]$. By approximating we can assume that $f$ is an infinitely differentiable function with compact support. It follows from the duality relationship~\eqref{sec2-eq2.1} and the fact
\begin{equation}\label{d-r-1}
\begin{split}
D^{H_1}_u\Delta_\varepsilon &f(B^{H_1}_{r}-\tilde{B}^{H_2}_{s})=1_{[0,r+\varepsilon]}(u) f'(B^{H_1}_{r+\varepsilon}-\tilde{B}^{H_2}_{s})-1_{[0,r]}(u) f'(B^{H_1}_{r}-\tilde{B}^{H_2}_{s})\\
& =1_{[r,r+\varepsilon]}(u)f'(B^{H_1}_{r+\varepsilon}-\tilde{B}^{H_2}_{s})
+1_{[0,r]}(u) \left\{f'(B^{H_1}_{r+\varepsilon}-\tilde{B}^{H_2}_{s})-f'(B^{H_1}_{r}-\tilde{B}^{H_2}_{s})
\right\}\\
&=1_{[r,r+\varepsilon]}(u)f'(B^{H_1}_{r+\varepsilon}-\tilde{B}^{H_2}_{s})
+1_{[0,r]}(u)\Delta_\varepsilon f'(B^{H_1}_{r}-\tilde{B}^{H_2}_{s})
\end{split}
\end{equation}
that
\begin{align*}
&\Upsilon_{\varepsilon}(H_1,H_2)=E\left[
\Delta_{\varepsilon}f(B^{H_1}_{r}-\tilde{B}^{H_2}_{s})\Delta_{\varepsilon} f(B^{H_1}_{r'}-\tilde{B}^{H_2}_{s'})(B^{H_1}_{r+\varepsilon}-B^{H_1}_r) \int_{r'}^{r'+\varepsilon}dB^{H_1}_l \right]\\
&=E\left[(B^{H_1}_{r+\varepsilon}-B^{H_1}_r) (B^{H_1}_{r'+\varepsilon}-B^{H_1}_{r'})\right]E\left[
f'(B^{H_1}_{r+\varepsilon}-\tilde{B}^{H_2}_s) \Delta_{\varepsilon}f(B^{H_1}_{r'}-\tilde{B}^{H_2}_{s'}) (B^{H_1}_{r+\varepsilon}-B^{H_1}_r)\right]\\
&\quad+E\left[B^{H_1}_r (B^{H_1}_{r'+\varepsilon}-B^{H_1}_{r'})\right]E\left[
\Delta_{\varepsilon}f'(B^{H_1}_{r}-\tilde{B}^{H_2}_s) \Delta_{\varepsilon}f(B^{H_1}_{r'}-\tilde{B}^{H_2}_{s'}) (B^{H_1}_{r+\varepsilon}-B^{H_1}_r)\right]\\
&\quad+E\left[(B^{H_1}_{r'+\varepsilon}-B^{H_1}_{r'})^2\right]E\left[
\Delta_{\varepsilon}f(B^{H_1}_{r}-\tilde{B}^{H_2}_s) f'(B^{H_1}_{r'+\varepsilon}-\tilde{B}^{H_2}_{s'}) (B^{H_1}_{r+\varepsilon}-B^{H_1}_r)\right]\\
&\quad +E\left[B^{H_1}_{r'}(B^{H_1}_{r'+\varepsilon}-B^{H_1}_{r'})\right]E\left[
\Delta_{\varepsilon}f(B^{H_1}_r-\tilde{B}^{H_2}_s) \Delta_{\varepsilon}f'(B^{H_1}_{r'}-\tilde{B}^{H_2}_{s'}) (B^{H_1}_{r+\varepsilon}-B^{H_1}_r)\right]\\
&\quad
+E\left[(B^{H_1}_{r+\varepsilon}-B^{H_1}_r)(B^{H_1}_{r'+\varepsilon} -B^{H_1}_{r'})\right] E\left[\Delta_{\varepsilon}f(B^{H_1}_r-\tilde{B}^{H_2}_s) \Delta_{\varepsilon}f(B^{H_1}_{r'}-\tilde{B}^{H_2}_{s'}) \right]\\
&\equiv \sum_{j=1}^5\Psi_{\varepsilon}(j;H_1,H_2)
\end{align*}
for all $s>r>0,s'>r'>0$ and $\varepsilon>0$. To prove the existence of the HQC, we need to estimate
$$
\Gamma_j:=\frac{1}{\varepsilon^{4H_1}}
\int_0^t\int_0^t dsds'\int_0^s\int_0^{s'}|\Psi_{\varepsilon}(j;H_1,H_2)|dr'dr,\quad j=1,2,3,4,5.
$$
The next lemma is proved in Appendix~\ref{app2}.
\begin{lemma}\label{lemA-1.2}
Let $\frac12<H<1$.
\begin{itemize}
\item [(1)] For all $t>s>r>0$ we have
\begin{equation}\label{sec2-Lemma2-1}
\begin{split}
&\left|E\left[B^H_t(B^H_{t}-B^H_{s})\right]\right|\leq C_Ht^{2H-1}(t-s),\\
&\left|E\left[B^H_t(B^H_{s}-B^H_{r})\right]\right|\leq C_Ht^{2H-1}(s-r),\\
&\left|E\left[B^H_r(B^H_{t}-B^H_{s})\right]\right|\leq C_Hr^{2H-1}(t-s),\\
&\left|E\left[B^H_s(B^H_{t}-B^H_{r})\right]\right|\leq C_Hs^{2H-1}(t-r).
\end{split}
\end{equation}

\item [(2)] For all $0<s'<t'<s<t$ we have
\begin{equation}\label{app1-eq1.3-0}
\left|E\left[(B^H_t-B^H_s)(B^H_{t'}-B^H_{s'})\right]\right|\leq
C_H\frac{(t-s)(t'-s')}{(s-t')^{2-2H}}.
\end{equation}
\end{itemize}
\end{lemma}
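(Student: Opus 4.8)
The plan is to reduce every expectation to an algebraic combination of powers $x^{2H}$ via the covariance function $E[B^H_uB^H_v]=\frac12(u^{2H}+v^{2H}-|u-v|^{2H})$, and then to estimate those combinations with a small toolbox valid for $\frac12<H<1$: the identity $b^{2H}-a^{2H}=2H\int_a^b u^{2H-1}\,du$ for $0\le a\le b$; monotonicity of $u\mapsto u^{2H-1}$ (increasing, since $2H-1>0$) and of $u\mapsto u^{2H-2}$ (decreasing, since $2H-2<0$); superadditivity of $x\mapsto x^{2H}$, i.e. $(a+b)^{2H}\ge a^{2H}+b^{2H}$ (because $2H>1$); and subadditivity of $x\mapsto x^{2H-1}$, i.e. $(a+b)^{2H-1}\le a^{2H-1}+b^{2H-1}$ (because $0<2H-1<1$). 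In every case below the relevant integrand turns out to be nonnegative, so the bound on the absolute value follows at once.

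For part (1), expanding the kernel gives $2E[B^H_t(B^H_t-B^H_s)]=t^{2H}-s^{2H}+(t-s)^{2H}$ and $2E[B^H_t(B^H_s-B^H_r)]=(s^{2H}-r^{2H})+\bigl((t-r)^{2H}-(t-s)^{2H}\bigr)$; bounding each power difference by $2H$ times the largest integrand times the interval length, together with $(t-s)^{2H}\le t^{2H-1}(t-s)$, yields the first two inequalities with a constant proportional to $H$. The third inequality is the first that genuinely needs cancellation: one has
\[
2E\bigl[B^H_r(B^H_t-B^H_s)\bigr]=\bigl(t^{2H}-s^{2H}\bigr)-\bigl((t-r)^{2H}-(s-r)^{2H}\bigr)=2H\int_s^t\bigl(u^{2H-1}-(u-r)^{2H-1}\bigr)\,du,
\]
and subadditivity of $x\mapsto x^{2H-1}$ gives $u^{2H-1}-(u-r)^{2H-1}\le r^{2H-1}$, whence the bound $C_Hr^{2H-1}(t-s)$. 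The fourth inequality is the most delicate; writing $t=s+a$, $r=s-b$ with $a,b>0$, a rearrangement gives
\[
2E\bigl[B^H_s(B^H_t-B^H_r)\bigr]=\bigl((s+a)^{2H}-s^{2H}-a^{2H}\bigr)+\bigl(s^{2H}-(s-b)^{2H}+b^{2H}\bigr),
\]
where the first bracket is nonnegative by superadditivity and the second is a sum of nonnegative terms. For the first bracket the substitution $u\mapsto s+w$ together with subadditivity gives $(s+a)^{2H}-s^{2H}-a^{2H}=2H\int_0^a\bigl((s+w)^{2H-1}-w^{2H-1}\bigr)\,dw\le 2Hs^{2H-1}a$; for the second, $s^{2H}-(s-b)^{2H}\le 2Hs^{2H-1}b$ and $b^{2H}\le s^{2H-1}b$ since $b=s-r<s$. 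Adding and using $a+b=t-r$ produces $C_Hs^{2H-1}(t-r)$.

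For part (2), expanding $E[(B^H_t-B^H_s)(B^H_{t'}-B^H_{s'})]$ in the kernel collapses all four ``$u^{2H}$'' terms, and after the substitution $u=t-t'+w$ one is left with
\[
E\bigl[(B^H_t-B^H_s)(B^H_{t'}-B^H_{s'})\bigr]=H\int_0^{t'-s'}\bigl((t-t'+w)^{2H-1}-(s-t'+w)^{2H-1}\bigr)\,dw.
\]
Since $0<2H-1<1$, the integrand equals $(2H-1)\int_{s-t'+w}^{t-t'+w}v^{2H-2}\,dv$, and monotonicity of $v\mapsto v^{2H-2}$ bounds it by $(2H-1)(s-t')^{2H-2}(t-s)$; integrating in $w$ over $[0,t'-s']$ gives $C_H(t-s)(t'-s')(s-t')^{2H-2}$, which is the asserted estimate.

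The main obstacle is not any single hard step but choosing the right groupings: several of these covariances exceed the claimed right-hand side term by term (for instance $t^{2H}-s^{2H}$ alone is not $O(t^{2H-1}(t-s))$ once $t/s$ is large), so the $+$ and $-$ contributions must first be combined into telescoping differences, and one must pick the parametrization---$t=s+a$, $r=s-b$ in the fourth inequality, $u=t-t'+w$ in part (2)---that exposes the cancellation. Once the groupings are fixed, the only analytic inputs are the super/subadditivity of $x\mapsto x^{2H}$ and $x\mapsto x^{2H-1}$ and the monotonicity of their derivatives, all immediate from $\frac12<H<1$.
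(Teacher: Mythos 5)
Your proof is correct and follows essentially the paper's route: for part (2) the paper applies the mean value theorem twice to $G_{s,t}(x)=(s-x)^{2H}-(t-x)^{2H}$, which is exactly the cancellation you express in integral form via $\int u^{2H-1}$ and $\int v^{2H-2}$, and part (1), which the paper dismisses as a "simple exercise," is handled correctly by your groupings (in particular the super/subadditivity argument for the third and fourth bounds). Nothing further is needed.
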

\begin{lemma}
Let $\nu>\frac{2H_1-1}{H_1}$ and $f\in {\mathcal C}^\nu({\mathbb R})$. For all $0<\varepsilon<1$ and $t\in [0,T]$, we have
$$
\Gamma_5 \leq C \|f\|^2_{(\nu)}.
$$
\end{lemma}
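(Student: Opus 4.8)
The plan is to bound the two expectations in $\Psi_{\varepsilon}(5;H_1,H_2)$ separately and then integrate the resulting elementary kernel over $(r,s,r',s')$. For the $f$-factor, since $f\in\mathcal{C}^{\nu}$ one has $|\Delta_{\varepsilon}f(B^{H_1}_{r}-\tilde B^{H_2}_{s})|\le\|f\|_{(\nu)}|B^{H_1}_{r+\varepsilon}-B^{H_1}_{r}|^{\nu}$, and likewise with $(r',s')$. Hence by the Cauchy--Schwarz inequality and the Gaussian moment identity $E|B^{H_1}_{r+\varepsilon}-B^{H_1}_{r}|^{2\nu}=c_{\nu}\varepsilon^{2H_1\nu}$,
$$\bigl|E\bigl[\Delta_{\varepsilon}f(B^{H_1}_{r}-\tilde B^{H_2}_{s})\,\Delta_{\varepsilon}f(B^{H_1}_{r'}-\tilde B^{H_2}_{s'})\bigr]\bigr|\le c_{\nu}\,\|f\|_{(\nu)}^{2}\,\varepsilon^{2H_1\nu}.$$
This step needs no smoothness of $f$, so one may work directly with $f\in\mathcal{C}^{\nu}$; the reduction to smooth compactly supported $f$ that produced the decomposition $\Upsilon_{\varepsilon}=\sum_{j=1}^{5}\Psi_{\varepsilon}(j;H_1,H_2)$ remains available if preferred.

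For the increment-covariance factor I would split the $(r,r')$-domain according to whether $[r,r+\varepsilon]$ and $[r',r'+\varepsilon]$ overlap. On the diagonal strip $|r-r'|\le\varepsilon$ the Cauchy--Schwarz bound $|E[(B^{H_1}_{r+\varepsilon}-B^{H_1}_{r})(B^{H_1}_{r'+\varepsilon}-B^{H_1}_{r'})]|\le\varepsilon^{2H_1}$ suffices, and this strip has volume $\le C_{T}\varepsilon$ inside $\{0<r<s<T,\,0<r'<s'<T\}$. Off the diagonal, say $r>r'+\varepsilon$ (the case $r'>r+\varepsilon$ being symmetric), one has the ordering $0<r'<r'+\varepsilon<r<r+\varepsilon$, so Lemma~\ref{lemA-1.2}(2) yields $|E[(B^{H_1}_{r+\varepsilon}-B^{H_1}_{r})(B^{H_1}_{r'+\varepsilon}-B^{H_1}_{r'})]|\le C_{H_1}\varepsilon^{2}(r-r'-\varepsilon)^{2H_1-2}$. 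Since $H_1>\tfrac12$ the exponent $2H_1-2$ lies in $(-1,0)$, so $\int_{0}^{r-\varepsilon}(r-r'-\varepsilon)^{2H_1-2}\,dr'=\tfrac{1}{2H_1-1}(r-\varepsilon)^{2H_1-1}\le C_{H_1,T}$; enlarging the integration region to $[0,T]^{4}$ and carrying out the remaining (bounded) integrations, the off-diagonal part of $\int_{\{0<r<s<T,\,0<r'<s'<T\}}|E[(B^{H_1}_{r+\varepsilon}-B^{H_1}_{r})(B^{H_1}_{r'+\varepsilon}-B^{H_1}_{r'})]|$ is $\le C_{H_1,T}\varepsilon^{2}$. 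When $H_1=\tfrac12$ this region contributes nothing, since Brownian increments over disjoint intervals are uncorrelated, and only the diagonal estimate is needed.

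Combining the two factors, $\Gamma_{5}\le c_{\nu}\|f\|_{(\nu)}^{2}\,\varepsilon^{2H_1\nu-4H_1}\bigl(C_{T}\varepsilon^{1+2H_1}+C_{H_1,T}\varepsilon^{2}\bigr)=C_{H_1,T}\|f\|_{(\nu)}^{2}\bigl(\varepsilon^{\,2H_1\nu+1-2H_1}+\varepsilon^{\,2H_1\nu+2-4H_1}\bigr)$. The hypothesis $\nu>\frac{2H_1-1}{H_1}$ is precisely what makes the exponent $2H_1\nu+2-4H_1$ positive, and the other exponent exceeds it by $2H_1-1\ge0$; hence both powers of $\varepsilon$ are $\le1$ on $(0,1)$ (indeed they tend to $0$), giving $\Gamma_{5}\le C\|f\|_{(\nu)}^{2}$ uniformly in $\varepsilon\in(0,1)$ and $t\in[0,T]$. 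The main obstacle I anticipate is simply the exponent bookkeeping: verifying that the threshold for integrability of $(r-r'-\varepsilon)^{2H_1-2}$ and the threshold for positivity of the final $\varepsilon$-powers both collapse exactly to $\nu>\frac{2H_1-1}{H_1}$, while keeping the degenerate $H_1=\tfrac12$ case (where Lemma~\ref{lemA-1.2}(2) is unavailable but also unneeded) separate; the underlying integrals themselves are all elementary.
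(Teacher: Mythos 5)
Your proof is correct and follows essentially the same route as the paper: bound the H\"older factor by $C\|f\|_{(\nu)}^2\varepsilon^{2\nu H_1}$, control the increment covariance via Lemma~\ref{lemA-1.2}(2) off the diagonal and trivially on the $\varepsilon$-strip, and observe that $\nu>\frac{2H_1-1}{H_1}$ makes the resulting powers of $\varepsilon$ nonnegative, uniformly for $\varepsilon\in(0,1)$. The only difference is cosmetic: the paper absorbs the near-diagonal region into the same kernel by writing $\varepsilon^{2H_1}\le\varepsilon^{2}(r-r')^{2H_1-2}$ there, whereas your separate treatment of the strip by its volume (and your explicit handling of $H_1=\tfrac12$, where that kernel is not integrable and Lemma~\ref{lemA-1.2}(2) does not apply) is in fact slightly cleaner.
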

\begin{proof}
For $j=5$, by~\eqref{app1-eq1.3-0} we have
\begin{align*}
|E[(B^{H_1}_{r+\varepsilon}-B^{H_1}_r)(B^{H_1}_{r'+\varepsilon} -B^{H_1}_{r'})]|\leq C\varepsilon^2 (r-r'-\varepsilon)^{2H_1-2}
\end{align*}
for $r>r'+\varepsilon$, and
\begin{align*}
|E[(B^{H_1}_{r+\varepsilon}-B^{H_1}_r)(B^{H_1}_{r'+\varepsilon} -B^{H_1}_{r'})]|\leq \varepsilon^{2H_1} \leq \varepsilon^2(r-r')^{2H_1-2}
\end{align*}
for $r'<r<r'+\varepsilon$. Combining these with
\begin{align*}
|E[\Delta_{\varepsilon}f(B^{H_1}_r&-\tilde{B}^{H_2}_s) \Delta_{\varepsilon}f(B^{H_1}_{r'}-\tilde{B}^{H_2}_{s'})]|\\
&\leq C\|f\|^2_{(\nu)}E[|B^{H_1}_{r+\varepsilon}-B^{H_1}_r|^\nu |B^{H_1}_{r'+\varepsilon}-B^{H_1}_{r'}|^\nu]|\leq C\|f\|^2_{(\nu)}\varepsilon^{2\nu H_1},
\end{align*}
we get
\begin{align*}
\Gamma_5&\leq C\frac{\varepsilon^{2+2\nu H_1}}{\varepsilon^{4H_1}}
\|f\|^2_{(\nu)}\int_0^t\int_0^t\int_0^s\int_0^{s'} \Bigl[(r-r'-\varepsilon)^{2H_1-2}1_{\{r>r'+\varepsilon\}} \\
&\hspace{6cm}
+(r-r')^{2H_1-2}1_{\{r\leq r'+\varepsilon\}}\Bigr]dr'drds'ds\\
&\leq C \|f\|^2_{(\nu)}
\end{align*}
for all $\nu\geq \frac{2H_1-1}{H_1}$.
\end{proof}
Recall that the notations $\lambda_{r,s},\mu_{\varepsilon_1,\varepsilon_2}$ and $\rho^2_{\varepsilon_1,\varepsilon_2}$ given in Section~\ref{sec3}. Then the next lemma holds which will be proved in Appendix~\ref{app2}.
\begin{lemma}\label{lem2.6}
Let $H_1>\frac12$ and let $s>r>0$, $s'>r'>0$, $s>s'$ and $\varepsilon>0$. Denote
$$
\Lambda_\varepsilon(r,r'):=|r-r'|+ |r+\varepsilon-r'|.
$$
Then we have
\begin{align}\label{sec2-eq2.8}
&\left|\rho_{\varepsilon,\varepsilon}-\rho_{\varepsilon,0}\right|
\leq C\varepsilon \left\{(r+\varepsilon)^{2H_1-1}\vee s^{2H_2-1}\right\}(r'\Lambda_\varepsilon(r,r'))^{-\alpha_1} (s'|s-s'|)^{-\alpha_2},\\ \label{sec2-eq2.9}
&|\frac{\mu_{\varepsilon,0}}{ \rho_{\varepsilon,0}} -\frac{\mu_{\varepsilon,\varepsilon}}{\rho_{\varepsilon,\varepsilon} }|
\leq C\varepsilon \left\{(r+\varepsilon)^{2H_1-1}\vee s^{2H_2-1}\right\}(r'\Lambda_\varepsilon(r,r'))^{-\alpha_1} (s'|s-s'|)^{-\alpha_2},\\  \label{sec2-eq2.10}
&|\frac{\mu_{\varepsilon,0}}{ \rho^2_{\varepsilon,0}} -\frac{\mu_{\varepsilon,\varepsilon}}{\rho^2_{\varepsilon,\varepsilon} }|
\leq C\varepsilon \left\{(r+\varepsilon)^{2H_1-1}\vee s^{2H_2-1}\right\}(r'\Lambda_\varepsilon(r,r'))^{-\alpha_1} (s'|s-s'|)^{-\alpha_2},\\   \label{sec2-eq2.11}
&|\frac{\lambda_{r',s'}}{ \rho^2_{\varepsilon,0}}
-\frac{\lambda_{r'+\varepsilon,s'}}{\rho^2_{\varepsilon,\varepsilon}}|
\leq C\varepsilon (r'\Lambda_\varepsilon(r,r'))^{-\alpha_1} (s'|s-s'|)^{-\alpha_2}
\end{align}
for all $\alpha_1$ and $\alpha_2$ satisfying
\begin{equation}
0<\alpha_1<1-H_1,\quad 0<\alpha_2<1-H_2.
\end{equation}
\end{lemma}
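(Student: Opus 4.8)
The plan is to reduce each of the four quantities in \eqref{sec2-eq2.8}--\eqref{sec2-eq2.11} to an explicit algebraic expression in the fBm covariances, then to peel off the $\varepsilon$--increments from the ``denominator'' factors $\rho_{\varepsilon,\cdot}$. First I would write everything out. By independence of $B^{H_1}$ and $\tilde B^{H_2}$ and the covariance formula of fBm one gets $\mu_{\varepsilon,0}-\mu_{\varepsilon,\varepsilon}=-E[B^{H_1}_{r+\varepsilon}(B^{H_1}_{r'+\varepsilon}-B^{H_1}_{r'})]$, $\lambda_{r'+\varepsilon,s'}-\lambda_{r',s'}=(r'+\varepsilon)^{2H_1}-(r')^{2H_1}$, and
$$
\rho^2_{\varepsilon,\varepsilon}-\rho^2_{\varepsilon,0}=\lambda_{r+\varepsilon,s}\bigl(\lambda_{r'+\varepsilon,s'}-\lambda_{r',s'}\bigr)-\bigl(\mu_{\varepsilon,\varepsilon}-\mu_{\varepsilon,0}\bigr)\bigl(\mu_{\varepsilon,\varepsilon}+\mu_{\varepsilon,0}\bigr),
$$
all of which are concrete polynomials in the $2H_i$--powers of $r,r',r\pm\varepsilon,r'\pm\varepsilon,s,s',s-s'$. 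For \eqref{sec2-eq2.9}--\eqref{sec2-eq2.11} I would use the elementary identity $\frac{P_1}{Q_1}-\frac{P_2}{Q_2}=\frac{P_1-P_2}{Q_2}+P_1\frac{Q_2-Q_1}{Q_1Q_2}$ with $Q_i=\rho_{\varepsilon,\cdot}$ or $\rho^2_{\varepsilon,\cdot}$, so that each difference becomes a ``numerator difference'' term plus a ``denominator difference'' term, the latter again controlled through $\rho^2_{\varepsilon,\varepsilon}-\rho^2_{\varepsilon,0}$ after factoring $\rho^k_{\varepsilon,\varepsilon}-\rho^k_{\varepsilon,0}=(\rho^2_{\varepsilon,\varepsilon}-\rho^2_{\varepsilon,0})/(\rho_{\varepsilon,\varepsilon}+\rho_{\varepsilon,0})\cdot(\sum_j\rho^{\cdot}_{\cdot})$.

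Second, I would estimate the numerators. Since $H_1>\frac12$, Lemma~\ref{lemA-1.2}(1), the concavity bound $b^{\beta}-a^{\beta}\le\beta b^{\beta-1}(b-a)$ and \eqref{sec6-eq6-18} give $|\mu_{\varepsilon,\varepsilon}-\mu_{\varepsilon,0}|\le C\varepsilon\{(r+\varepsilon)^{2H_1-1}\vee s^{2H_2-1}\}$ and $0\le\lambda_{r'+\varepsilon,s'}-\lambda_{r',s'}\le C\varepsilon\{(r+\varepsilon)^{2H_1-1}\vee s^{2H_2-1}\}$, after a short case split on the position of $r+\varepsilon$ relative to the interval $(r',r'+\varepsilon)$ (when $r+\varepsilon$ lies strictly inside, one decomposes $B^{H_1}_{r'+\varepsilon}-B^{H_1}_{r'}$ at $r+\varepsilon$). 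A point that must be tracked carefully: in the regime where $r$ is close to $r'$ and $s$ close to $s'$, the leading $O(\varepsilon)$ contributions to $\rho^2_{\varepsilon,\varepsilon}-\rho^2_{\varepsilon,0}$ cancel and only an $O(\varepsilon^{2H_1})$ remainder survives, and this refined bound is what makes the final estimate uniform. Combined with the trivial bounds $0\le\lambda_{\cdot,\cdot},|\mu_{\cdot,\cdot}|,\rho_{\cdot,\cdot}\le C$ on $[0,1]$, this supplies the factor $\varepsilon\{(r+\varepsilon)^{2H_1-1}\vee s^{2H_2-1}\}$ (and just $\varepsilon$ in \eqref{sec2-eq2.11}, where no extra power survives).

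Third, and this is where I expect the real difficulty, I would lower-bound the denominators. Applying Lemma~\ref{lem2.5} with the shifted arguments gives $\rho^2_{\varepsilon,0}\asymp(((r+\varepsilon)\wedge r')^{2H_1}+(s\wedge s')^{2H_2})(|r+\varepsilon-r'|^{2H_1}+|s-s'|^{2H_2})$ and $\rho^2_{\varepsilon,\varepsilon}\asymp((( r\wedge r')+\varepsilon)^{2H_1}+(s\wedge s')^{2H_2})(|r-r'|^{2H_1}+|s-s'|^{2H_2})$. Young's inequality turns each right-hand side into a product of powers; the elementary facts $\Lambda_\varepsilon(r,r')\le 2|r-r'|+\varepsilon$, $r'\le(r\wedge r')+\varepsilon+|r-r'|$, and $x^p+y^p\ge c_p(x+y)^p$ then allow one to convert $|r-r'|$ and $|r+\varepsilon-r'|$ into $\Lambda_\varepsilon(r,r')$ (in the crossing configuration $r<r'<r+\varepsilon$ only after adding $\rho^2_{\varepsilon,0}$ and $\rho^2_{\varepsilon,\varepsilon}$, which carry the same $(s,s')$--factor) and to convert the various minima into $r'$. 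The constraints $0<\alpha_1<1-H_1$ and $0<\alpha_2<1-H_2$ are exactly what lets the Young exponents be chosen so that the resulting powers of $r'$, $\Lambda_\varepsilon(r,r')$, $s'$ and $|s-s'|$ do not exceed $\alpha_1,\alpha_1,\alpha_2,\alpha_2$. The most delicate situation is the simultaneous near-degeneracy $r\approx r'$, $s\approx s'$: there several factors shrink at once, the crude numerator bound has to be replaced by the $O(\varepsilon^{2H_1})$ bound from Step~2, and one must play $\rho_{\varepsilon,0}$ off against $\rho_{\varepsilon,\varepsilon}$ since their degeneracies occur at disjoint $\varepsilon$--scales; this is the main obstacle.

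Finally, I would assemble: insert the Step~2 numerator bounds and the Step~3 lower bounds into the splittings of Step~1. For \eqref{sec2-eq2.8} one uses directly $\rho_{\varepsilon,\varepsilon}-\rho_{\varepsilon,0}=(\rho^2_{\varepsilon,\varepsilon}-\rho^2_{\varepsilon,0})/(\rho_{\varepsilon,\varepsilon}+\rho_{\varepsilon,0})$; for \eqref{sec2-eq2.9}--\eqref{sec2-eq2.11} the ``numerator difference'' terms are handled as above and the ``denominator difference'' terms are reduced through the same factorisation, so in each case the dominant resulting term is at most $C\varepsilon\{(r+\varepsilon)^{2H_1-1}\vee s^{2H_2-1}\}(r'\Lambda_\varepsilon(r,r'))^{-\alpha_1}(s'|s-s'|)^{-\alpha_2}$, which is the asserted estimate.
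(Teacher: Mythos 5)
Your plan arrives at the same estimates but by a genuinely different route for the central bound \eqref{sec2-eq2.8}. The paper never works with the raw covariance increments: it rescales, setting $r'+\varepsilon=x(r+\varepsilon)$, $r'=z(r+\varepsilon)$, $s'=ys$, writes $\rho^2_{\varepsilon,\varepsilon}-\rho^2_{\varepsilon,0}=\frac14(r+\varepsilon)^{4H_1}\{f_{H_1}(x)-f_{H_1}(z)\}+\frac14(r+\varepsilon)^{2H_1}s^{2H_2}\{g(x,y)-g(z,y)\}$ with the functions $f_H,g$ from the proof of Lemma~\ref{lem2.5}, and then applies the mean value theorem together with the bounds $|f_H'(\xi)|\leq C\xi^{2H-1}(1-\xi)$ and $|g_x'(x,y)|\leq C\bigl(x^{2H_1-1}(1-y)+(1-x)^{2H_1-1}y\bigr)$; the endpoint factors $(1-\xi)$, $(1-y)$, $y$ automatically supply the extra smallness in $|r+\varepsilon-r'|$ and $|s-s'|$ that offsets the degenerating denominator, so no separate cancellation argument is ever needed. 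You instead keep the exact identity $\rho^2_{\varepsilon,\varepsilon}-\rho^2_{\varepsilon,0}=\lambda_{r+\varepsilon,s}(\lambda_{r'+\varepsilon,s'}-\lambda_{r',s'})-(\mu_{\varepsilon,\varepsilon}-\mu_{\varepsilon,0})(\mu_{\varepsilon,\varepsilon}+\mu_{\varepsilon,0})$ and appeal to a near-diagonal cancellation leaving an $O(\varepsilon^{2H_1})$ remainder; this is indeed what happens (write the difference as $D\bigl(\lambda_{r+\varepsilon,s}-\tfrac12(\mu_{\varepsilon,\varepsilon}+\mu_{\varepsilon,0})\bigr)-\tfrac12 D'(\mu_{\varepsilon,\varepsilon}+\mu_{\varepsilon,0})$ with $D=(r'+\varepsilon)^{2H_1}-(r')^{2H_1}$ and $D'=|r+\varepsilon-r'|^{2H_1}-|r-r'|^{2H_1}$, and control the first bracket by Cauchy--Schwarz), but in your write-up this cancellation is asserted rather than derived, and it is the one step you would actually have to carry out, since with only the crude $O(\varepsilon)$ numerator bound the simultaneous degeneracy $r\approx r'$, $s\approx s'$ cannot be absorbed. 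In exchange, your route is more elementary and makes visible exactly where the factor $\varepsilon$ and the cancellation come from, while the paper's rescaling packages the same refinement into one mechanism with no case analysis near the diagonal. Your treatment of \eqref{sec2-eq2.9}--\eqref{sec2-eq2.11} (quotient splitting plus the increment estimate~\eqref{sec2-eq2.7}) and of the denominators (Lemma~\ref{lem2.5} plus Young's inequality, trading $|r-r'|$ and $|r+\varepsilon-r'|$ for $\Lambda_\varepsilon(r,r')$) coincides with the paper's. One caveat applies equally to you and to the paper: the Young-exponent bookkeeping yields \eqref{sec2-eq2.8}--\eqref{sec2-eq2.11} for suitably chosen $\alpha_1<1-H_1$, $\alpha_2<1-H_2$ (which is all that is used afterwards), not literally for every such pair, so your remark that the constraints let the exponents ``not exceed'' any prescribed $\alpha_i$ should be read in the same existential way as the paper's choice of $\beta_1,\ldots,\beta_4$.
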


In order to estimate $\Gamma_j$ with $j\geq 2$ we need some preliminaries. Denote
\begin{align*}
\Theta_{r,r'}(i,j)&:=E[
f^{(i)}(B^{H_1}_{r}-\tilde{B}^{H_2}_s) f^{(j)}(B^{H_1}_{r'}-\tilde{B}^{H_2}_{s'})],\\
\Theta_{r,r'}(i,\Delta j)&:=
E[f^{(i)}(B^{H_1}_{r}-\tilde{B}^{H_2}_s) \Delta_{\varepsilon}f^{(j)}(B^{H_1}_{r'}-\tilde{B}^{H_2}_{s'})],\\
\Theta_{r,r'}(\Delta i,\Delta j)&:=
E[\Delta f^{(i)}(B^{H_1}_{r}-\tilde{B}^{H_2}_s) \Delta_{\varepsilon}f^{(j)}(B^{H_1}_{r'}-\tilde{B}^{H_2}_{s'})]
\end{align*}
with $i,j\in\{0,1,2\}$. Let $\varphi_{\varepsilon_1,\varepsilon_2}(x,y)$ be the density function of $
(B^{H_1}_{r+\varepsilon_1}-\tilde{B}^{H_2}_s, B^{H_1}_{r'+\varepsilon_2}-\tilde{B}^{H_2}_{s'})$
with $s>r>0,s'>r'>0$ and $\varepsilon_1,\varepsilon_2\geq 0$. That is
$$
\varphi_{\varepsilon_1,\varepsilon_2}(x,y) =\frac1{2\pi\rho_{\varepsilon_1,\varepsilon_2}} \exp\left\{-\frac{1}{2\rho^2_{\varepsilon_1,\varepsilon_2}}\left(
\lambda_{r'+\varepsilon_2,s'}x^2-2\mu_{\varepsilon_1,\varepsilon_2}xy +\lambda_{r+\varepsilon_1,s}y^2\right)\right\}.
$$

\begin{lemma}\label{lem10.1}
Let $\varepsilon>0,s>r>0$, $s'>r'>0$, $s>s'$ and $f\in C^\infty_0({\mathbb R})\cap {\mathcal C}^\nu$ with $0<\nu\leq 1$, then we have
\begin{align*}
&|\Theta_{r_1,r_2}(i,j)|\leq C \|f\|_{(\nu)}^2 (r\wedge r')^{-H_1}(s')^{-H_2}|r-r'|^{-H_1}|s-s'|^{-H_2} ,\\
&|\Theta_{r,r'}(i,\Delta j)|\leq C\varepsilon^\nu\|f\|_{(\nu)}
\left(rr'\Lambda_\varepsilon(r,r')\right)^{-\alpha_1-H_1} \left(ss'|s-s'|\right)^{-\alpha_2-H_2},\\
&|\Theta_{r,r'}(\Delta i,\Delta j)|\leq C\varepsilon^\nu\|f\|_{(\nu)}
\left(rr'\Lambda_\varepsilon(r,r')\right)^{-\alpha_1-H_1} \left(ss'|s-s'|\right)^{-\alpha_2-H_2}
\end{align*}
for all $0<\alpha_1<1-H_1$ and $0<\alpha_2<1-H_2$, where $i,j\in\{0,1,2\}$ with $i+j=2$ and $r_1,r_2\in\{r+\varepsilon,r'+\varepsilon,r,r'\}$ with $r_1\neq r_2$.
\end{lemma}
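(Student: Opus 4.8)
The plan is to reduce each of the three quantities to an integral of the product of two copies of $f$ against a derivative --- or, in the $\Delta_\varepsilon$-cases, against a difference in the shift parameters $\varepsilon_1,\varepsilon_2$ of such a derivative --- of the explicit bivariate Gaussian density $\varphi_{\varepsilon_1,\varepsilon_2}$, using the smoothness and compact support of $f$ to justify the integrations by parts, and then to estimate that density; the bounds are phrased through $\|f\|_{(\nu)}$ so as to survive the later extension to general $f\in{\mathcal C}^\nu$.

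For the first bound, with $i+j=2$, I would invoke Lemma~\ref{lemA1-4}: its proof is a plain Gaussian integration by parts resting on the duality relationship \eqref{sec2-eq2.1} and places no restriction on $H_1$, so (replacing $r,r'$ by $r_1,r_2$ and using $(E[|f|^2])^{1/2}\leq\|f\|_\infty\leq\|f\|_{(\nu)}$) it yields $|\Theta_{r_1,r_2}(i,j)|\leq C\|f\|_{(\nu)}^2\,\rho^{-2}\big(\lambda_{r_1,s}\vee\lambda_{r_2,s'}\big)$. As $\varepsilon$ is small one has $r_1\asymp r$, $r_2\asymp r'$, so Lemma~\ref{lem2.5} gives $\rho^2\asymp\big((r\wedge r')^{2H_1}+(s')^{2H_2}\big)\big(|r-r'|^{2H_1}+|s-s'|^{2H_2}\big)$ (recall $s>s'$), and applying $A+B\geq A^{1/2}B^{1/2}$ to each factor gives $\rho^2\geq C(r\wedge r')^{H_1}(s')^{H_2}|r-r'|^{H_1}|s-s'|^{H_2}$; since the numerator is bounded on $[0,1]^4$, the first estimate follows.

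For $\Theta_{r,r'}(i,\Delta j)$ and $\Theta_{r,r'}(\Delta i,\Delta j)$ I would expand each $\Delta_\varepsilon$ into its two summands and integrate by parts in every summand as above, reducing matters to estimating $\int_{{\mathbb R}^2}f(x)f(y)\,\Phi(x,y)\,dxdy$ where $\Phi$ is a first difference (resp.\ a second difference, which I would write as a difference of two first differences and bound by their sum), in the shift parameters, of $\partial_x^i\partial_y^j\varphi_{\varepsilon_1,\varepsilon_2}$. Writing this derivative as a polynomial of degree $\leq2$ in $(x,y)$, with coefficients rational in $\lambda,\mu,\rho$, times $\varphi_{\varepsilon_1,\varepsilon_2}$, and expanding $\Phi$ by the mean value theorem, every elementary difference that arises --- $|\rho_{\varepsilon,\varepsilon}-\rho_{\varepsilon,0}|$, $|\tfrac{\mu_{\varepsilon,0}}{\rho_{\varepsilon,0}}-\tfrac{\mu_{\varepsilon,\varepsilon}}{\rho_{\varepsilon,\varepsilon}}|$, $|\tfrac{\mu_{\varepsilon,0}}{\rho_{\varepsilon,0}^2}-\tfrac{\mu_{\varepsilon,\varepsilon}}{\rho_{\varepsilon,\varepsilon}^2}|$, $|\tfrac{\lambda_{r',s'}}{\rho_{\varepsilon,0}^2}-\tfrac{\lambda_{r'+\varepsilon,s'}}{\rho_{\varepsilon,\varepsilon}^2}|$, and the further ones for $\rho^{-2}$ and $\rho^{-4}$ built from these --- is controlled by Lemma~\ref{lem2.6}, producing a factor $\varepsilon$ and the mild weight $(r'\Lambda_\varepsilon(r,r'))^{-\alpha_1}(s'|s-s'|)^{-\alpha_2}$ ($0<\alpha_1<1-H_1$, $0<\alpha_2<1-H_2$), possibly with a residual inverse power of $\rho$ that Lemma~\ref{lem2.5} and Young's inequality convert into negative powers of $r\wedge r'$, $s'$, $|r-r'|$, $|s-s'|$ as in the first part. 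Bounding the remaining Gaussian moments against $\varphi_{\varepsilon_1,\varepsilon_2}$ and, where needed, using the case split $|r-r'|\leq\varepsilon$ versus $|r-r'|>\varepsilon$ together with $\Lambda_\varepsilon(r,r')\geq|r-r'|\vee\varepsilon$ and $r,r',s,s'\leq1$, one folds all the weights into $(rr'\Lambda_\varepsilon(r,r'))^{-\alpha_1-H_1}(ss'|s-s'|)^{-\alpha_2-H_2}$; finally $\varepsilon\leq\varepsilon^\nu$ (as $\varepsilon\in(0,1)$, $\nu\leq1$) and $\|f\|_\infty\leq\|f\|_{(\nu)}$ give the stated bound.

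The integrations by parts and the Gaussian moment computations are routine. The hard part will be the organization of the $\Delta_\varepsilon$ estimates: differencing the density $\varphi_{\varepsilon_1,\varepsilon_2}$ --- a nonlinear function of the covariance data $\lambda,\mu,\rho$ --- in both shift directions with a bound uniform in $\varepsilon$ and integrable over ${\mathbb D}_{0,t}^2$, and then merging the weights coming from $\rho^{-2}$ (Lemma~\ref{lem2.5}) with those coming from the $\varepsilon$-differences (Lemma~\ref{lem2.6}) into the single compact product asserted, with the attendant case split on $|r-r'|$ versus $\varepsilon$; here Lemma~\ref{lem2.6} carries the argument, and one must verify that all cases $(i,j)\in\{(2,0),(1,1),(0,2)\}$ and $r_1,r_2\in\{r,r',r+\varepsilon,r'+\varepsilon\}$ with $r_1\neq r_2$ are handled uniformly.
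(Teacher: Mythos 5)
Your first estimate is fine and is essentially the paper's own argument: a Gaussian integration by parts as in Lemma~\ref{lemA1-4} (equivalently, a direct computation with the joint density), followed by the lower bound for $\rho^{2}$ from Lemma~\ref{lem2.5} and $A+B\ge\sqrt{AB}$, is exactly how the paper gets $|\Theta_{r_1,r_2}(i,j)|\le C\|f\|_{(\nu)}^{2}(r\wedge r')^{-H_1}(s')^{-H_2}|r-r'|^{-H_1}|s-s'|^{-H_2}$.

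The genuine gap is in your plan for the $\Delta_\varepsilon$-terms. You difference the kernel $\partial_x^i\partial_y^j\varphi_{\varepsilon_1,\varepsilon_2}=P\,\varphi$ in the original $(x,y)$ coordinates and estimate $f$ only through $\|f\|_\infty$, hoping to recover $\varepsilon^{\nu}$ at the very end via $\varepsilon\le\varepsilon^{\nu}$. The piece $P_{\varepsilon,0}\,(\varphi_{\varepsilon,\varepsilon}-\varphi_{\varepsilon,0})$ defeats this: by the mean value theorem in the covariance parameters, $\varphi_{\varepsilon,\varepsilon}-\varphi_{\varepsilon,0}$ is $O(\varepsilon)$ times a parameter-derivative of $\varphi$ whose coefficients, after taking Gaussian moments, are of size up to $\rho^{-4}$ (they are not among the quantities $\mu/\rho$, $\mu/\rho^{2}$, $\lambda_{r'+\varepsilon,s'}/\rho^{2}$ that Lemma~\ref{lem2.6} controls), and multiplying by the $\rho^{-2}$-size coefficients of $P$ you are left with $\varepsilon\,\|f\|_\infty^{2}$ times weights carrying factors $|r-r'|^{-2H_1}|s-s'|^{-2H_2}$. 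For $H_1,H_2\ge\frac12$ these are not integrable over ${\mathbb D}_{0,t}^{2}$ and are strictly worse than the claimed weight $(rr'\Lambda_\varepsilon(r,r'))^{-\alpha_1-H_1}(ss'|s-s'|)^{-\alpha_2-H_2}$, all of whose exponents are below one; replacing $\varepsilon$ by $\varepsilon^{\nu}$ does nothing for the weight, and interpolating between the differenced and undifferenced bounds only yields $\varepsilon^{\theta}$ with $\theta<(1-H_2)/H_2$, which does not cover all $\nu\le 1$ nor the later application. The missing idea is that the factor $\varepsilon^{\nu}$ together with the mild weights must come from the H\"older continuity of $f$ applied to a shift of its \emph{argument}. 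This is precisely what the paper's substitution $x=\sqrt{\lambda_{r+\varepsilon,s}}\,u$, $y=\mu u/\sqrt{\lambda_{r+\varepsilon,s}}+\rho v/\sqrt{\lambda_{r+\varepsilon,s}}$ accomplishes: the reference measure becomes the fixed standard Gaussian in $(u,v)$ (so no density difference ever appears), the $\varepsilon$-dependence of the law is pushed into the argument of the second copy of $f$, where $\|f\|_{(\nu)}$ produces $\varepsilon^{\nu}$ and simultaneously raises the singular factors coming from Lemma~\ref{lem2.6} and \eqref{sec2-eq2.7} to the power $\nu$, and the only coefficient differences left are exactly those covered by Lemma~\ref{lem2.6}. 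Without this change of variables, or an equivalent device routing the $\varepsilon$-shift through the argument of $f$, your scheme cannot reach the stated bound.
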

\begin{proof}
Let $0<r<s<t$, $0<r'<s'<t$ $s>s'$ and $\varepsilon>0$. We only estimate $\Theta_{r+\varepsilon,r'+\varepsilon}(1,1)$, $\Theta_{r+\varepsilon,r'+\varepsilon}(2,\Delta 0)$, $\Theta_{r+\varepsilon,r'+\varepsilon}(1,\Delta 1)$ and similarly one can estimate the others. We have
\begin{align*}
|\Theta_{r+\varepsilon,r'+\varepsilon}&(1,1)|=\left|\int_{\mathbb{R}^2}
f(x)f(y)\frac{\partial^{2}}{\partial x\partial
y}\varphi_{\varepsilon,\varepsilon}(x,y)dxdy\right|\\
&\leq \int_{\mathbb{R}^2} |f(x)f(y)|\left|\frac1{\rho^4_{\varepsilon,\varepsilon}} (\lambda_{r'+\varepsilon,s'}x-y\mu_{\varepsilon,\varepsilon}
)(\lambda_{r+\varepsilon,s}y-\mu_{\varepsilon,\varepsilon} x)+\frac{\mu_{\varepsilon,\varepsilon}}{\rho^2_{\varepsilon, \varepsilon}}\right|\varphi_{\varepsilon,\varepsilon}(x,y)dxdy\\
&\leq \|f\|_{(\nu)}^2\left(\mu_{\varepsilon,\varepsilon}+
\sqrt{\lambda_{r+\varepsilon,s}\lambda_{r'+\varepsilon,s'}} \right)\frac{1}{\rho^2_{\varepsilon, \varepsilon}}\\
&\leq C \|f\|_{(\nu)}^2 (r\wedge r')^{-H_1}(s')^{-H_2}|r-r'|^{-H_1}|s-s'|^{-H_2}
\end{align*}
by Lemma~\ref{lem2.5}. In order to estimate $\Theta_{r+\varepsilon,r'+\varepsilon}(2,\Delta 0)$, we have
\begin{align*}
E[f''&(B^{H_1}_{r+\varepsilon}-\tilde{B}^{H_2}_s) f(B^{H_1}_{r'+\varepsilon}-\tilde{B}^{H_2}_{s'})]
=\int_{\mathbb{R}^2}
f''(x)f(y)\varphi_{\varepsilon,\varepsilon}(x,y)dxdy\\
&=\int_{\mathbb{R}^2}
f(x)f(y)\frac{\partial^{2}}{\partial x^2}
\varphi_{\varepsilon,\varepsilon}(x,y)dxdy\\
&=\int_{\mathbb{R}^2} f(x)f(y)\left\{\frac1{\rho^4_{\varepsilon,\varepsilon}} (\lambda_{r'+\varepsilon,s'}x-y\mu_{\varepsilon,\varepsilon}
)^2-\frac{\lambda_{r'+\varepsilon,s'}}{\rho^2_{\varepsilon,\varepsilon}} \right\} \varphi_{\varepsilon,\varepsilon}(x,y)dxdy\\
&=\int_{\mathbb{R}^2} f\left(\sqrt{\lambda_{r+\varepsilon,s}}u\right) f\left(\frac{\rho_{\varepsilon,\varepsilon}}{\sqrt{ \lambda_{r+\varepsilon,s}}}v+ \frac{\mu_{\varepsilon,\varepsilon}}{\sqrt{\lambda_{r+\varepsilon,s}}}u \right)\\
&\qquad\cdot\left(
\frac{u^2}{\lambda_{r+\varepsilon,s}} -2\frac{\mu_{\varepsilon,\varepsilon}}{\rho_{\varepsilon,\varepsilon} \lambda_{r+\varepsilon,s}}uv+\frac{\mu_{\varepsilon,\varepsilon}}{
\rho^2_{\varepsilon,\varepsilon}\lambda_{r+\varepsilon,s}}v^2
-\frac{\lambda_{r'+\varepsilon,s'}}{\rho^2_{\varepsilon,\varepsilon}} \right)\frac1{2\pi}e^{-\frac12(u^2+v^2)}dudv
\end{align*}
by making substitutions in the second identity
$$
x=\sqrt{\lambda_{r+\varepsilon,s}}u,\quad y=\frac{\mu_{\varepsilon,\varepsilon}}{\sqrt{\lambda_{r+\varepsilon,s}}}u+ \frac{\rho_{\varepsilon,\varepsilon}}{\sqrt{\lambda_{r+\varepsilon,s}}}v.
$$
It follows that
\begin{align*}
&\Theta_{r+\varepsilon,r'+\varepsilon}(2,\Delta 0)
=E[f''(B^{H_1}_{r+\varepsilon}-\tilde{B}^{H_2}_s) f(B^{H_1}_{r'+\varepsilon}-\tilde{B}^{H_2}_{s'})]-
E[f''(B^{H_1}_{r+\varepsilon}-\tilde{B}^{H_2}_s) f(B^{H_1}_{r'}-\tilde{B}^{H_2}_{s'})]\\
&=\int_{\mathbb{R}^2}f(\sqrt{\lambda_{r+\varepsilon,s}}u)
\frac1{2\pi}e^{-\frac12(u^2+v^2)}dudv\\
&\quad\cdot \left\{f(\frac{\rho_{\varepsilon,\varepsilon}}{ \sqrt{\lambda_{r+\varepsilon,s}}}v+ \frac{\mu_{\varepsilon,\varepsilon}}{ \sqrt{\lambda_{r+\varepsilon,s}}}u)\left(
\frac{u^2}{\lambda_{r+\varepsilon,s}} -\frac{2\mu_{\varepsilon,\varepsilon}}{\rho_{\varepsilon,\varepsilon} \lambda_{r+\varepsilon,s}}uv+\frac{\mu_{\varepsilon,\varepsilon}}{
\rho^2_{\varepsilon,\varepsilon}\lambda_{r+\varepsilon,s}}v^2
-\frac{\lambda_{r'+\varepsilon,s'}}{\rho^2_{\varepsilon,\varepsilon}} \right)\right.\\
&\qquad-\left.
f(\frac{\rho_{\varepsilon,0}}{ \sqrt{\lambda_{r+\varepsilon,s}}}v+ \frac{\mu_{\varepsilon,0}}{ \sqrt{\lambda_{r+\varepsilon,s}}}u)\left(
\frac{u^2}{\lambda_{r+\varepsilon,s}} -\frac{2\mu_{\varepsilon,0}}{\rho_{\varepsilon,0} \lambda_{r+\varepsilon,s}}uv+\frac{\mu_{\varepsilon,0}}{
\rho^2_{\varepsilon,0}\lambda_{r+\varepsilon,s}}v^2
-\frac{\lambda_{r',s'}}{\rho^2_{\varepsilon,0}} \right)\right\}\\
&=\int_{\mathbb{R}^2}f(\sqrt{\lambda_{r+\varepsilon,s}}u)
\left(f(\frac{\rho_{\varepsilon,\varepsilon}}{ \sqrt{\lambda_{r+\varepsilon,s}}}v+ \frac{\mu_{\varepsilon,\varepsilon}}{ \sqrt{\lambda_{r+\varepsilon,s}}}u)-f(\frac{\rho_{\varepsilon,0}}{ \sqrt{\lambda_{r+\varepsilon,s}}}v+ \frac{\mu_{\varepsilon,0}}{ \sqrt{\lambda_{r+\varepsilon,s}}}u)\right)\\
&\qquad\qquad\cdot \left(
\frac{u^2}{\lambda_{r+\varepsilon,s}} -\frac{2\mu_{\varepsilon,\varepsilon}}{\rho_{\varepsilon,\varepsilon} \lambda_{r+\varepsilon,s}}uv+\frac{\mu_{\varepsilon,\varepsilon}}{
\rho^2_{\varepsilon,\varepsilon}\lambda_{r+\varepsilon,s}}v^2
-\frac{\lambda_{r'+\varepsilon,s'}}{\rho^2_{\varepsilon,\varepsilon}} \right)\frac1{2\pi} e^{-\frac12(u^2+v^2)}dudv\\
&\quad+\int_{\mathbb{R}^2}f(\sqrt{\lambda_{r+\varepsilon,s}}u) f(\frac{\rho_{\varepsilon,0}}{ \sqrt{\lambda_{r+\varepsilon,s}}}v+ \frac{\mu_{\varepsilon,0}}{ \sqrt{\lambda_{r+\varepsilon,s}}}u)
\frac1{2\pi}e^{-\frac12(u^2+v^2)}\\
&\qquad\qquad\cdot\left\{\frac{2uv}{\lambda_{r+\varepsilon,s}}
\left(\frac{\mu_{\varepsilon,0}}{\rho_{\varepsilon,0}} -\frac{\mu_{\varepsilon,\varepsilon}}{\rho_{\varepsilon,\varepsilon} }\right)+\frac{v^2}{\lambda_{r+\varepsilon,s}}
\left(\frac{\mu_{\varepsilon,\varepsilon}}{
\rho^2_{\varepsilon,\varepsilon}}
-\frac{\mu_{\varepsilon,0}}{
\rho^2_{\varepsilon,0}}\right)
+\frac{\lambda_{r',s'}}{\rho^2_{\varepsilon,0}}
-\frac{\lambda_{r'+\varepsilon,s'}}{\rho^2_{\varepsilon,\varepsilon}} \right\}dudv\\
&\equiv \Theta_{r+\varepsilon,r'+\varepsilon}(2,\Delta 0,1)+\Theta_{r+\varepsilon,r'+\varepsilon}(2,\Delta 0,2).
\end{align*}
By Lemma~\ref{lem2.6} we have
\begin{align*}
\Theta_{r+\varepsilon,r'+\varepsilon}&(2,\Delta 0,2)\leq C\varepsilon^\nu\|f\|_{(\nu)}(rr'\Lambda_\varepsilon(r,r'))^{ -\alpha_1-H_1}(ss'|s-s'|)^{-\alpha_2-H_2}
\end{align*}
for all $\beta\in (0,1)$. On the other hand, from~\eqref{sec2-eq2.8} and the estimates
\begin{align}\notag
|\mu_{\varepsilon,0}-\mu_{\varepsilon,\varepsilon}|&=
|E[(B^{H_1}_{r+\varepsilon}-\tilde{B}^{H_2}_s) (B^{H_1}_{r'}-\tilde{B}^{H_2}_{s'})] -E[(B^{H_1}_{r+\varepsilon}-\tilde{B}^{H_2}_s) (B^{H_1}_{r'+\varepsilon}-\tilde{B}^{H_2}_{s'})]|\\  \notag
&=|E[(B^{H_1}_{r+\varepsilon}-\tilde{B}^{H_2}_s) (B^{H_1}_{r'+\varepsilon}-B^{H_1}_{r'})]|\\ \notag
&=|E[B^{H_1}_{r+\varepsilon}(B^{H_1}_{r'+\varepsilon} -B^{H_1}_{r'})]|\\ \notag
&=\frac12\left|
(r'+\varepsilon)^{2H_1}-(r')^{2H_1}+|r-r'+\varepsilon|^{2H_1} -|r-r'|^{2H_1}\right|\\ \label{sec2-eq2.7}
&\leq C (r+\varepsilon)^{2H_1-1}\varepsilon,
\end{align}
we find that there are $\alpha_1,\alpha_2$ satisfying
$$
0<\alpha_1<1-H_1,\quad 0<\alpha_2<1-H_2,
$$
such that
$$
\frac{|\mu_{\varepsilon,\varepsilon} -\mu_{\varepsilon,0}|}
{\sqrt{\lambda_{r+\varepsilon,s}}}\leq C\varepsilon (r+\varepsilon)^{-\alpha_1}s^{-\alpha_2}<C\varepsilon r^{-\alpha_1}s^{-\alpha_2}
$$
and
$$
\frac{|\rho_{\varepsilon,\varepsilon} -\rho_{\varepsilon,0}|}
{\sqrt{\lambda_{r+\varepsilon,s}}}\leq C\varepsilon (rr'\Lambda_\varepsilon(r,r'))^{-\alpha_1}(ss'|s-s'|)^{-\alpha_2}
$$
for all $0<\alpha_i<1-H_i$, which imply that
\begin{align*}
|f(\frac{\rho_{\varepsilon,\varepsilon}}{ \sqrt{\lambda_{r+\varepsilon,s}}}v&+ \frac{\mu_{\varepsilon,\varepsilon}}{ \sqrt{\lambda_{r+\varepsilon,s}}}u)-f(\frac{\rho_{\varepsilon,0}}{ \sqrt{\lambda_{r+\varepsilon,s}}}v+ \frac{\mu_{\varepsilon,0}}{ \sqrt{\lambda_{r+\varepsilon,s}}}u)|\\
&\leq C\|f\|_{(\nu)}\left(\frac{|\rho_{\varepsilon,\varepsilon} -\rho_{\varepsilon,0}|^\nu}
{\sqrt{(\lambda_{r+\varepsilon,s})^\nu}}|v|^\nu+ \frac{|\mu_{\varepsilon,\varepsilon}-\mu_{\varepsilon,0}|^\nu}
{\sqrt{(\lambda_{r+\varepsilon,s})^\nu}}|u|^\nu\right)\\
&\leq C\varepsilon^\nu\|f\|_{(\nu)}(rr'\Lambda_\varepsilon(r,r'))^{-\nu\alpha_1}
(ss'|s-s'|)^{-\nu\alpha_2}(|u|+|v|).
\end{align*}
Combining this with
\begin{align*}
Y(u,v):&=|\frac{u^2}{\lambda_{r+\varepsilon,s}} -\frac{2\mu_{\varepsilon,\varepsilon}}{\rho_{\varepsilon,\varepsilon} \lambda_{r+\varepsilon,s}}uv+\frac{\mu_{\varepsilon,\varepsilon}}{
\rho^2_{\varepsilon,\varepsilon}\lambda_{r+\varepsilon,s}}v^2
-\frac{\lambda_{r'+\varepsilon,s'}}{\rho^2_{\varepsilon,\varepsilon}}|\\
&\leq Cr^{-H_1}s^{-H_2}(r')^{-H_1}(s')^{-H_2}|r-r'|^{-H_1}|s-s'|^{-H_2}
\left(u^2+2|uv|+v^2+1\right)
\end{align*}
for all $u,v\in {\mathbb R}$, we get
\begin{align*}
|\Theta_{r+\varepsilon,r'+\varepsilon} (2,\Delta 0,1)|&\leq \int_{\mathbb{R}^2}dudv
|f(\sqrt{\lambda_{r+\varepsilon,s}}u)||Y(u,v)|\frac1{2\pi} e^{-\frac12(u^2+v^2)} \\
&\quad\cdot
|f(\frac{\rho_{\varepsilon,\varepsilon}}{ \sqrt{\lambda_{r+\varepsilon,s}}}v+ \frac{\mu_{\varepsilon,\varepsilon}}{ \sqrt{\lambda_{r+\varepsilon,s}}}u)-f(\frac{\rho_{\varepsilon,0}}{ \sqrt{\lambda_{r+\varepsilon,s}}}v+ \frac{\mu_{\varepsilon,0}}{ \sqrt{\lambda_{r+\varepsilon,s}}}u)|\\
&\leq  C\varepsilon^\nu\|f\|_{(\nu)} (rr'\Lambda_\varepsilon(r,r'))^{-\nu\alpha_1-H_1}
(ss'|s-s'|)^{-\nu\alpha_2-H_2}\\
&\leq  C\varepsilon^\nu\|f\|_{(\nu)}(rr'\Lambda_\varepsilon(r,r'))^{ -\alpha_1-H_1}(ss'|s-s'|)^{-\alpha_2-H_2}
\end{align*}
for all $0<\alpha_i<1-H_i$ since $\nu\in (0,1]$.

Finally, let us estimate $\Theta_{r+\varepsilon,r'+\varepsilon}(1,\Delta 1)$. We have
\begin{align*}
&\Theta_{r+\varepsilon,r'+\varepsilon}(1,\Delta 1)
=\Theta_{r+\varepsilon,r'+\varepsilon}(1,1)-\Theta_{r+\varepsilon,r'}(1,1)\\
&\quad=\int_{\mathbb{R}^2}f(\sqrt{\lambda_{r+\varepsilon,s}}u)
\left\{f(\frac{\rho_{\varepsilon,\varepsilon}}{ \sqrt{\lambda_{r+\varepsilon,s}}}v+\frac{\mu_{\varepsilon,\varepsilon}}{ \sqrt{\lambda_{r+\varepsilon,s}}}u)\left(uv+ \frac{\mu_{\varepsilon,\varepsilon}}{ \rho^2_{\varepsilon,\varepsilon}}(1-v^2)\right)\right.\\
&\qquad-\left.
f(\frac{\rho_{\varepsilon,0}}{ \sqrt{\lambda_{r+\varepsilon,s}}}v+ \frac{\mu_{\varepsilon,0}}{ \sqrt{\lambda_{r+\varepsilon,s}}}u)\left(uv +\frac{\mu_{\varepsilon,0}}{ \rho^2_{\varepsilon,0}}(1-v^2) \right)\right\}\frac1{2\pi}e^{-\frac12(u^2+v^2)} dudv\\
&\quad=\int_{\mathbb{R}^2}f(\sqrt{\lambda_{r+\varepsilon,s}}u)
\left(f(\frac{\rho_{\varepsilon,\varepsilon}}{ \sqrt{\lambda_{r+\varepsilon,s}}}v+ \frac{\mu_{\varepsilon,\varepsilon}}{ \sqrt{\lambda_{r+\varepsilon,s}}}u)-f(\frac{\rho_{\varepsilon,0}}{ \sqrt{\lambda_{r+\varepsilon,s}}}v+ \frac{\mu_{\varepsilon,0}}{ \sqrt{\lambda_{r+\varepsilon,s}}}u)\right)\\
&\hspace{4cm}\cdot\left(uv+\frac{\mu_{\varepsilon,\varepsilon}}{ \rho^2_{\varepsilon,\varepsilon}}(1-v^2)\right)\frac1{2\pi} e^{-\frac12(u^2+v^2)}dudv\\
&+\left(\frac{\mu_{\varepsilon,\varepsilon}}{
\rho^2_{\varepsilon,\varepsilon}}
-\frac{\mu_{\varepsilon,0}}{
\rho^2_{\varepsilon,0}}\right) \int_{\mathbb{R}^2}f(\sqrt{\lambda_{r+\varepsilon,s}}u) f(\frac{\rho_{\varepsilon,0}}{ \sqrt{\lambda_{r+\varepsilon,s}}}v+ \frac{\mu_{\varepsilon,0}}{ \sqrt{\lambda_{r+\varepsilon,s}}}u)
(1-v^2)\frac1{2\pi}e^{-\frac12(u^2+v^2)}dudv
\end{align*}
for all $s>r>0,s'>r'>0$ and $\varepsilon\geq 0$. Thus, the estimate of $\Theta_{r+\varepsilon,r'+\varepsilon}(1,\Delta 1)$ follows from Lemma~\ref{lem2.6} and the estimate of $\Theta_{r+\varepsilon,r'+\varepsilon}(2,\Delta 0)$. This completes the proof.
\end{proof}

\begin{lemma}\label{lem10.3}
Let $\frac{2H_1-1}{H_1}<\nu\leq 1$ and $f\in C^\infty_0({\mathbb R})\cap {\mathcal C}^\nu$. For all $\varepsilon>0$ and $t\in [0,T]$, we have
$$
|\Gamma_1|\leq C \|f\|^2_{(\nu)}.
$$
\end{lemma}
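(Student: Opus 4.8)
The plan is to run the duality relationship \eqref{sec2-eq2.1} once more inside $\Psi_\varepsilon(1;H_1,H_2)$, so that the surviving fBm increment is eliminated and the copies of $f$ reappear as the $\Theta$-quantities estimated in Lemma~\ref{lem10.1}, and then to integrate the resulting kernels over $\{0<r<s<t,\ 0<r'<s'<t\}$. Recall that
\begin{align*}
\Psi_\varepsilon(1;H_1,H_2)=E\bigl[(B^{H_1}_{r+\varepsilon}-B^{H_1}_r)(B^{H_1}_{r'+\varepsilon}-B^{H_1}_{r'})\bigr]\,
E\bigl[f'(B^{H_1}_{r+\varepsilon}-\tilde{B}^{H_2}_s)\,\Delta_\varepsilon f(B^{H_1}_{r'}-\tilde{B}^{H_2}_{s'})\,(B^{H_1}_{r+\varepsilon}-B^{H_1}_r)\bigr],
\end{align*}
and the second expectation still contains $B^{H_1}_{r+\varepsilon}-B^{H_1}_r=\delta^{H_1}(1_{[r,r+\varepsilon]})$. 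Applying \eqref{sec2-eq2.1} with $D^{H_1}_u f'(B^{H_1}_{r+\varepsilon}-\tilde{B}^{H_2}_s)=1_{[0,r+\varepsilon]}(u)f''(B^{H_1}_{r+\varepsilon}-\tilde{B}^{H_2}_s)$ and the formula \eqref{d-r-1} for $D^{H_1}_u\Delta_\varepsilon f(B^{H_1}_{r'}-\tilde{B}^{H_2}_{s'})$, I would rewrite that expectation as
\begin{align*}
E[B^{H_1}_{r+\varepsilon}(B^{H_1}_{r+\varepsilon}-B^{H_1}_r)]\,\Theta_{r+\varepsilon,r'}(2,\Delta 0)
&+E[(B^{H_1}_{r'+\varepsilon}-B^{H_1}_{r'})(B^{H_1}_{r+\varepsilon}-B^{H_1}_r)]\,\Theta_{r+\varepsilon,r'+\varepsilon}(1,1)\\
&+E[B^{H_1}_{r'}(B^{H_1}_{r+\varepsilon}-B^{H_1}_r)]\,\Theta_{r+\varepsilon,r'}(1,\Delta 1),
\end{align*}
so that $\Psi_\varepsilon(1;H_1,H_2)$ becomes a sum of three terms of the shape $C_0\cdot C_1\cdot\Theta$, where $C_0=E[(B^{H_1}_{r+\varepsilon}-B^{H_1}_r)(B^{H_1}_{r'+\varepsilon}-B^{H_1}_{r'})]$, $C_1$ is one of the three covariances just displayed, and $\Theta$ is the corresponding $\Theta$-quantity. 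Here one may assume $s>s'$ by symmetry, the region $s<s'$ reducing to this after relabelling $(r,s)\leftrightarrow(r',s')$ and being covered by the remaining cases of Lemma~\ref{lem10.1}.

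For the pointwise bounds I would invoke Lemma~\ref{lemA-1.2} for the increment covariances, giving $|C_0|\le C\varepsilon^2(r-r'-\varepsilon)^{2H_1-2}$ when $r>r'+\varepsilon$ and $|C_0|\le\varepsilon^{2H_1}\le\varepsilon^2|r-r'|^{2H_1-2}$ when $r'<r<r'+\varepsilon$ (and symmetrically), the same for $E[(B^{H_1}_{r'+\varepsilon}-B^{H_1}_{r'})(B^{H_1}_{r+\varepsilon}-B^{H_1}_r)]$, and $|E[B^{H_1}_{r+\varepsilon}(B^{H_1}_{r+\varepsilon}-B^{H_1}_r)]|\le C(r+\varepsilon)^{2H_1-1}\varepsilon$, $|E[B^{H_1}_{r'}(B^{H_1}_{r+\varepsilon}-B^{H_1}_r)]|\le C(r')^{2H_1-1}\varepsilon$; and Lemma~\ref{lem10.1} for the $\Theta$-factors, giving $|\Theta_{\cdot,\cdot}(i,\Delta j)|\le C\varepsilon^\nu\|f\|^2_{(\nu)}(rr'\Lambda_\varepsilon(r,r'))^{-\alpha_1-H_1}(ss'|s-s'|)^{-\alpha_2-H_2}$ and $|\Theta_{\cdot,\cdot}(1,1)|\le C\|f\|^2_{(\nu)}(r\wedge r')^{-H_1}(s')^{-H_2}|r-r'|^{-H_1}|s-s'|^{-H_2}$, valid for any $0<\alpha_1<1-H_1$ and $0<\alpha_2<1-H_2$.

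Combining these and dividing by $\varepsilon^{4H_1}$, the term carrying $\Theta(1,1)$ acquires the scalar prefactor $\varepsilon^{2+2}/\varepsilon^{4H_1}=\varepsilon^{4(1-H_1)}$ and the two terms carrying a difference-$\Theta$ acquire $\varepsilon^{2+1+\nu}/\varepsilon^{4H_1}=\varepsilon^{3+\nu-4H_1}$; since $H_1<1$ and $\nu>\tfrac{2H_1-1}{H_1}$, both are positive powers and hence $\le1$ for $0<\varepsilon\le1$. What remains is to check that the residual singular kernels integrate finitely over $\{0<r<s<t,\ 0<r'<s'<t\}$ uniformly in $\varepsilon\in(0,1]$, and this is exactly where the hypotheses enter: $H_1\ge\tfrac12$ makes $2H_1-2\ge-1$, so $\int(r-r'-\varepsilon)^{2H_1-2}\,dr$ and the $\Lambda_\varepsilon$-analogue stay controllable; $\alpha_i<1-H_i$ keeps the singularities $(r')^{-\alpha_1-H_1}$, $(s')^{-\alpha_2-H_2}$, $|s-s'|^{-\alpha_2-H_2}$ locally integrable; and for the difference-$\Theta$ terms one chooses $\alpha_1>0$ small enough that the negative power $\varepsilon^{H_1-1-\alpha_1}$ produced by $\int_0^{s'}\Lambda_\varepsilon(r,r')^{H_1-2-\alpha_1}\,dr'$ is absorbed, i.e.\ so that $2+\nu-3H_1-\alpha_1\ge0$, which is possible precisely because $3H_1-2<\tfrac{2H_1-1}{H_1}<\nu$ for every $H_1\in[\tfrac12,1)$. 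Collecting the three contributions gives $|\Gamma_1|\le C\|f\|^2_{(\nu)}$.

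The real work is the $\varepsilon$-bookkeeping of the last step: one must match the divergent scale $\varepsilon^{-4H_1}$ against the powers of $\varepsilon$ gained from \emph{two} increment covariances and from $\Delta_\varepsilon f$, while simultaneously keeping the leftover kernel integrable over the four-dimensional region, and these two demands constrain the auxiliary exponent $\alpha_1$ from opposite sides; the combination $H_1\ge\tfrac12$ and $\nu>\tfrac{2H_1-1}{H_1}$ is what leaves a nonempty admissible window for $\alpha_1$ (and $\alpha_2$). The estimates $\Gamma_2,\Gamma_3,\Gamma_4$ are then obtained by the same scheme, using the remaining cases of Lemma~\ref{lem10.1} and Lemma~\ref{lem2.6}.
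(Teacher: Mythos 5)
Your proposal follows the paper's own proof essentially verbatim: the second application of the duality relationship~\eqref{sec2-eq2.1} together with~\eqref{d-r-1} yields exactly the paper's three-term decomposition $\Psi_\varepsilon(1;H_1,H_2)=h(r+\varepsilon,r,r'+\varepsilon,r')\bigl[h(r+\varepsilon,0,r+\varepsilon,r)\Theta_{r+\varepsilon,r'}(2,\Delta 0)+h(r+\varepsilon,r,r'+\varepsilon,r')\Theta_{r+\varepsilon,r'+\varepsilon}(1,1)+h(r',0,r+\varepsilon,r)\Theta_{r+\varepsilon,r'}(1,\Delta 1)\bigr]$, and the bounds you invoke are the same Lemma~\ref{lemA-1.2}, Lemma~\ref{lem10.1} and the elementary covariance bound the paper uses. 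Your additional $\varepsilon$-bookkeeping (region splitting at $|r-r'|\asymp\varepsilon$ and absorbing the leftover negative power of $\varepsilon$ using $\nu>\frac{2H_1-1}{H_1}$) only makes explicit what the paper leaves implicit, so the argument is correct and the same in substance.
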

\begin{proof}
Given $\varepsilon> 0$ and $t\in [0,T]$. Denote
$$
h(s,r,s',r'):=E[(B^{H_1}_{s}-B^{H_1}_r) (B^{H_1}_{s'}-B^{H_1}_{r'})]
$$
for all $s>r>0$ and $s'>r'>0$. By the duality relationship~\eqref{sec2-eq2.1} and~\eqref{d-r-1} it follows that
\begin{align*}
\Psi_{\varepsilon}&(1;H_1,H_2)\\
&=h(r+\varepsilon,r,r'+\varepsilon,r')E[
f'(B^{H_1}_{r+\varepsilon}-\tilde{B}^{H_2}_s) \Delta_{\varepsilon}f(B^{H_1}_{r'}-\tilde{B}^{H_2}_{s'}) (B^{H_1}_{r+\varepsilon}-B^{H_1}_r)]\\
&=h(r+\varepsilon,r,r'+\varepsilon,r')
 h(r+\varepsilon,0,r+\varepsilon,r)
 E[f''(B^{H_1}_{r+\varepsilon}-\tilde{B}^{H_2}_s) \Delta_{\varepsilon}f(B^{H_1}_{r'}-\tilde{B}^{H_2}_{s'})]\\
&\qquad+(h(r+\varepsilon,r,r'+\varepsilon,r'))^2 E[
 f'(B^{H_1}_{r+\varepsilon}-\tilde{B}^{H_2}_s) f'(B^{H_1}_{r'+\varepsilon}-\tilde{B}^{H_2}_{s'})]\\
&\qquad +h(r+\varepsilon,r,r'+\varepsilon,r')
 h(r',0,r+\varepsilon,r) E[
 f'(B^{H_1}_{r+\varepsilon}-\tilde{B}^{H_2}_s) \Delta_{\varepsilon}f'(B^{H_1}_{r'}-\tilde{B}^{H_2}_{s'})]\\
&=h(r+\varepsilon,r,r'+\varepsilon,r')
 h(r+\varepsilon,0,r+\varepsilon,r)\Theta_{r+\varepsilon,r'}(2,\Delta 0)\\
&\qquad+(h(r+\varepsilon,r,r'+\varepsilon,r'))^2 \Theta_{r+\varepsilon,r'+\varepsilon}(1,1)\\
&\qquad+h(r+\varepsilon,r,r'+\varepsilon,r')
 h(r',0,r+\varepsilon,r) \Theta_{r+\varepsilon,r'}(1,\Delta 1).
\end{align*}
Combining this with Lemma~\ref{lemA-1.2}, Lemma~\ref{lem10.1} and the fact
$$
|h(s,r,s',r')|=|E[(B^{H_1}_{s}-B^{H_1}_r) (B^{H_1}_{s'}-B^{H_1}_{r'})]|\leq |s-r|^{H_1}|s'-r'|^{H_1},
$$
we get
\begin{align*}
\Gamma_1=\frac{1}{\varepsilon^{4H_1}}
\int_0^t\int_0^t dsds'\int_0^s\int_0^{s'}|\Psi_{\varepsilon}(1;H_1,H_2)|dr'dr<\infty
\end{align*}
if $\nu\geq 2H_1-1$.
\end{proof}

In the same way as Lemma~\ref{lem10.3} one can show that the estimates
\begin{equation}\label{eq301010}
|\Gamma_i|\leq C \|f\|^2_{(\nu)},\quad i=2,3,4
\end{equation}
for all $\varepsilon>0$, $t\in [0,T]$ and $f\in C^\infty_0({\mathbb R})\cap {\mathcal C}^\nu$ with $\nu>2H_1-1$. In fact, by the duality relationship~\eqref{sec2-eq2.1} and~\eqref{d-r-1} we have
\begin{align*}
\Psi_{\varepsilon}(2;H_1,H_2)&=E[B^{H_1}_r (B^{H_1}_{r'+\varepsilon}-B^{H_1}_{r'})]E[
\Delta_{\varepsilon}f'(B^{H_1}_{r}-\tilde{B}^{H_2}_s) \Delta_{\varepsilon}f(B^{H_1}_{r'}-\tilde{B}^{H_2}_{s'}) (B^{H_1}_{r+\varepsilon}-B^{H_1}_r)]\\
&=E[B^{H_1}_r (B^{H_1}_{r'+\varepsilon}-B^{H_1}_{r'})] E[(B^{H_1}_{r+\varepsilon}-B^{H_1}_r)^2]\Theta_{r+\varepsilon,r'}(2,\Delta 0)\\
&\quad+E[B^{H_1}_r (B^{H_1}_{r'+\varepsilon}-B^{H_1}_{r'})]E[B^{H_1}_r (B^{H_1}_{r+\varepsilon}-B^{H_1}_{r})]\Theta_{r,r'}(\Delta 2,\Delta 0)\\
&\quad+E[B^{H_1}_r (B^{H_1}_{r'+\varepsilon}-B^{H_1}_{r'})] \mu_{\varepsilon,\varepsilon}\Theta_{r'+\varepsilon,r}(1,\Delta 1)\\
&\quad +E[B^{H_1}_r (B^{H_1}_{r'+\varepsilon}-B^{H_1}_{r'})] E[B^{H_1}_{r'}(B^{H_1}_{r+\varepsilon}-B^{H_1}_{r})] \Theta_{r,r'}(\Delta 1,\Delta 1),
\end{align*}

\begin{align*}
\Psi_{\varepsilon}(3;H_1,H_2)&=E[(B^{H_1}_{r'+\varepsilon} -B^{H_1}_{r'})^2] E[
\Delta_{\varepsilon}f(B^{H_1}_{r}-\tilde{B}^{H_2}_s) f'(B^{H_1}_{r'+\varepsilon}-\tilde{B}^{H_2}_{s'}) (B^{H_1}_{r+\varepsilon}-B^{H_1}_r)]\\
&=\varepsilon^{4H_1}\Theta_{r+\varepsilon,r'+\varepsilon}(1,1)+
\varepsilon^{2H_1}E[B^{H_1}_{r} (B^{H_1}_{r+\varepsilon}-B^{H_1}_{r})]\Theta_{r'+\varepsilon,r}(1,\Delta 1)\\
&\qquad
+\varepsilon^{2H_1}E[B^{H_1}_{r'+\varepsilon}(B^{H_1}_{r+\varepsilon}-B^{H_1}_{r})]
\Theta_{r'+\varepsilon,r}(2,\Delta 0)
\end{align*}
and
\begin{align*}
\Psi_{\varepsilon}(4;H_1,H_2)&=E[B^{H_1}_{r'}(B^{H_1}_{r'+\varepsilon} -B^{H_1}_{r'})]E[
\Delta_{\varepsilon}f(B^{H_1}_r-\tilde{B}^{H_2}_s) \Delta_{\varepsilon}f'(B^{H_1}_{r'}-\tilde{B}^{H_2}_{s'}) (B^{H_1}_{r+\varepsilon}-B^{H_1}_r)]\\
&=\varepsilon^{2H_1}E[B^{H_1}_{r'}(B^{H_1}_{r'+\varepsilon}-B^{H_1}_{r'})]
\Theta_{r+\varepsilon,r'}(1,\Delta 1)\\
&\qquad+E[B^{H_1}_{r'}(B^{H_1}_{r'+\varepsilon}-B^{H_1}_{r'})] E[B^{H_1}_{r}(B^{H_1}_{r+\varepsilon}-B^{H_1}_{r})] \Theta_{r,r'}(\Delta 1,\Delta 1)\\
&\qquad+h(r+\varepsilon,r,r'+\varepsilon,r') E[B^{H_1}_{r'}(B^{H_1}_{r'+\varepsilon}-B^{H_1}_{r'})] \Theta_{r'+\varepsilon,r}(2,\Delta 0)\\
&\qquad+E[B^{H_1}_{r'}(B^{H_1}_{r'+\varepsilon}-B^{H_1}_{r'})] E[B^{H_1}_{r'}(B^{H_1}_{r+\varepsilon}-B^{H_1}_{r})]\Theta_{r,r'}(\Delta 0,\Delta 2)
\end{align*}
for all $\varepsilon>0$ and $s,r,s',r'>0$. Thus, the estimates~\eqref{eq301010} follows from Lemma~\ref{lemA-1.2} and Lemma~\ref{lem10.1}, and we get the next desired result.
\begin{lemma}\label{lem10.111}
Let $f\in C^\infty_0({\mathbb R})\cap {\mathcal C}^\nu$ with $\nu>\frac{2H_1-1}{H_1}$. Then,
$$
E\left|J_\varepsilon(H_1,H_2,t,f)\right|^2\leq C \|f\|_{(\nu)}^2
$$
for all $\varepsilon>0$ and $t\in [0,T]$.
\end{lemma}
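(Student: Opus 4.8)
The plan is to obtain the estimate by assembling the five partial bounds already in hand. First I would recall the identity
\[
E|J_\varepsilon(H_1,H_2,t,f)|^2=\frac1{\varepsilon^{4H_1}} \int_0^t\int_0^t \int_0^s\int_0^{s'}\Upsilon_{\varepsilon}(H_1,H_2)\,dr'drds'ds
\]
together with the pointwise decomposition $\Upsilon_{\varepsilon}(H_1,H_2)=\sum_{j=1}^5\Psi_{\varepsilon}(j;H_1,H_2)$ coming from the duality relationship \eqref{sec2-eq2.1} and \eqref{d-r-1}; this decomposition is legitimate precisely because $f\in C_0^\infty({\mathbb R})$, which makes all the integrations by parts and the Fubini exchanges valid on the bounded domain of integration. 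Applying the triangle inequality I would then write $E|J_\varepsilon(H_1,H_2,t,f)|^2\leq\sum_{j=1}^5\Gamma_j$, where $\Gamma_j$ is the quantity estimated above. I would also remark that restricting to the region $s>s'$, as in Lemma~\ref{lem2.6} and Lemma~\ref{lem10.1}, loses nothing, since $\Upsilon_\varepsilon$ is symmetric under $(r,s)\leftrightarrow(r',s')$ and the complementary region contributes an equal amount.

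Next I would check that the single hypothesis $\nu>\frac{2H_1-1}{H_1}$ is strong enough to feed each of the previously proved bounds. The lemma on $\Gamma_5$ established above needs only $\nu\geq\frac{2H_1-1}{H_1}$; Lemma~\ref{lem10.3} needs $\frac{2H_1-1}{H_1}<\nu\leq1$; and \eqref{eq301010} needs $\nu>2H_1-1$. Since $(2H_1-1)(1-H_1)\geq0$ for $H_1\in[\tfrac12,1)$ gives $\frac{2H_1-1}{H_1}\geq 2H_1-1$, the assumed inequality implies all three conditions (and $\nu\leq1$ is automatic, because ${\mathcal C}^\nu$ contains no nonconstant function for $\nu>1$). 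Combining $\Gamma_5\leq C\|f\|_{(\nu)}^2$, $|\Gamma_1|\leq C\|f\|_{(\nu)}^2$ and $|\Gamma_i|\leq C\|f\|_{(\nu)}^2$ for $i=2,3,4$ then gives $E|J_\varepsilon(H_1,H_2,t,f)|^2\leq\sum_{j=1}^5\Gamma_j\leq C\|f\|_{(\nu)}^2$, with a constant independent of $\varepsilon$ and $t$, which is the claim.

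Strictly speaking no real obstacle remains: the substantive analysis — the pointwise estimates of $\Theta_{r,r'}(i,j)$, $\Theta_{r,r'}(i,\Delta j)$ and $\Theta_{r,r'}(\Delta i,\Delta j)$ in Lemma~\ref{lem10.1}, the increment bounds of Lemma~\ref{lemA-1.2} and Lemma~\ref{lem2.6}, and the integrability over the simplex of the singular kernels $(rr'\Lambda_\varepsilon(r,r'))^{-\alpha_1-H_1}(ss'|s-s'|)^{-\alpha_2-H_2}$ — was already carried out in proving Lemma~\ref{lem10.3} and \eqref{eq301010}. The one point I would be careful about, and which is essentially the whole content of the statement, is the bookkeeping of the powers of $\varepsilon$: each factor $\varepsilon^{2H_1}$ or $h(\cdot)$ appearing in a $\Psi_\varepsilon(j)$ must cancel against the $\varepsilon^{-4H_1}$ in front, while every $\Delta_\varepsilon$-factor supplies an additional $\varepsilon^{\nu}$, and following this through shows that the worst case sits in $\Gamma_1$ and demands exactly $\nu>\frac{2H_1-1}{H_1}$. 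Once that accounting is settled, summation of the five estimates finishes the proof.
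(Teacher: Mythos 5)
Your proposal is correct and follows essentially the same route as the paper: the lemma is obtained exactly by combining the decomposition $\Upsilon_\varepsilon=\sum_{j=1}^5\Psi_\varepsilon(j;H_1,H_2)$ with the previously established bounds $\Gamma_5\leq C\|f\|_{(\nu)}^2$, Lemma~\ref{lem10.3} for $\Gamma_1$, and~\eqref{eq301010} for $\Gamma_2,\Gamma_3,\Gamma_4$, summing the five contributions. Your check that the single hypothesis $\nu>\frac{2H_1-1}{H_1}$ implies all the conditions required by those partial estimates is the right bookkeeping and matches the paper's argument.
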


Now, we can obtain our main object of this section.
\begin{theorem}\label{th10.1}
Let $f\in {\mathcal C }^{\nu}$ with $\nu\geq \frac{2H_1-1}{H_1}$ and $\frac1{H_1}+\frac1{H_2}>3$. Then, the HQC $[f(B^{H_1}-\tilde{B}^{H_2}),B^{H_1}]^{(HC)}$ exists, the Bouleau-Yor type identity
\begin{align}\label{sce10-eq10.200}
[f(B^{H_1}-\tilde{B}^{H_2}),B^{H_1}]^{(HC)}_t=-\int_{{\mathbb R}}f(x)\ell'_t(x)dx
\end{align}
and the estimate
\begin{align}\label{sce10-eq10.201}
E\left|[f(B^{H_1}-\tilde{B}^{H_2}),B^{H_1}]^{(HC)}_t\right|^2\leq C \|f\|_{(\nu)}^2
\end{align}
hold for all $t\in [0,T]$.
\end{theorem}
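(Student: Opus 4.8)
The plan is to establish all three conclusions first for $f\in C_0^\infty({\mathbb R})$, where they are immediate from results already in hand, and then to reach a general $f\in{\mathcal C}^\nu$ by two successive approximations: a mollification, which removes the smoothness, and a localisation in $\Omega$, which removes the compact-support restriction. Fix $t\in[0,T]$ and write $J_\varepsilon(h):=J_\varepsilon(H_1,H_2,t,h)$ and $I_t(h):=\int_{\mathbb R}h(a)\ell'_t(a)\,da$; here $C$ denotes a constant depending only on $H_1,H_2,T$ and the relevant H\"older exponent. If $g\in C^\infty_0({\mathbb R})\cap{\mathcal C}^\nu$, then $g\in C^1({\mathbb R})$, so Corollary~\ref{cor9.1} gives that $J_\varepsilon(g)$ converges in $L^1(\Omega)$ to $\int_0^t ds\int_0^s g'(B^{H_1}_r-\tilde B^{H_2}_s)\,dr$, and the occupation formula~\eqref{sec5-eq5.12} (applicable since $\frac1{H_1}+\frac1{H_2}>3$) identifies this limit with $-I_t(g)$; moreover Lemma~\ref{lem10.111} and Fatou's lemma applied to an a.s.\ convergent subsequence of $\{J_\varepsilon(g)\}$ give $E|I_t(g)|^2\le C\|g\|_{(\nu)}^2$. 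Thus the theorem holds for every $g\in C^\infty_0({\mathbb R})\cap{\mathcal C}^\nu$.

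\emph{Removing smoothness.} For fixed $\omega$ and $\varepsilon\in(0,1)$ the map $h\mapsto J_\varepsilon(h)(\omega)$ is linear and satisfies $|J_\varepsilon(h)(\omega)|\le 2\varepsilon^{-2H_1}\Bigl(\int_0^t ds\int_0^s|B^{H_1}_{r+\varepsilon}-B^{H_1}_r|\,dr\Bigr)\|h\|_\infty$, the bracket being finite a.s.; hence $J_\varepsilon(h_m)\to J_\varepsilon(h)$ a.s.\ whenever $h_m\to h$ uniformly. Given a compactly supported $h\in{\mathcal C}^{\nu}$, let $h_m=h*\zeta_m$ with $\zeta_m$ as in~\eqref{sec8-eq8.4}: then $h_m\in C^\infty_0({\mathbb R})\cap{\mathcal C}^\nu$, $\|h_m\|_{(\nu)}\le\|h\|_{(\nu)}$, $h_m\to h$ uniformly, and $\|h_m-h\|_{(\nu')}\to0$ for every $\nu'\in(0,\nu)$ by the standard interpolation between $\|\cdot\|_\infty$ and $\|\cdot\|_{(\nu)}$. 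Fatou's lemma together with Lemma~\ref{lem10.111} then yields: for every $\beta>\tfrac{2H_1-1}{H_1}$, $E|J_\varepsilon(h)|^2\le C\|h\|_{(\beta)}^2$ for all $\varepsilon\in(0,1)$, $t\in[0,T]$ and all compactly supported $h\in{\mathcal C}^\beta$. Now choose $\nu'$ with $\tfrac{2H_1-1}{H_1}<\nu'<\nu$; applying the last bound with $\beta=\nu'$ to $h-h_m$ gives $E|J_\varepsilon(h-h_m)|^2\le C\|h-h_m\|_{(\nu')}^2\to0$ uniformly in $\varepsilon$, while for each $m$ the opening paragraph gives $J_\varepsilon(h_m)\to-I_t(h_m)$ in $L^1$ and Lemma~\ref{lem10.4} (first item, with exponent $\nu'$) gives $I_t(h_m)\to I_t(h)$ in $L^2$. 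Combining these three facts, $J_\varepsilon(h)\to-I_t(h)$ in $L^1(\Omega)$ for every compactly supported $h\in{\mathcal C}^\nu$, and in particular $\|I_t(h)\|_{L^2}\le C\|h\|_{(\nu)}$ by Fatou.

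\emph{Removing compact support, and conclusion.} Set $\Omega_N=\{\sup_{0\le u\le T+1}|B^{H_1}_u|\le N\}\cap\{\sup_{0\le u\le T+1}|\tilde B^{H_2}_u|\le N\}$, so $\Omega_N\uparrow\Omega$ a.s.\ by continuity of the paths, and $\psi_N(x)=\psi(x/N)$ for a fixed $\psi\in C_0^\infty({\mathbb R})$ with $0\le\psi\le1$ and $\psi\equiv1$ on $[-2,2]$, so that $\|f\psi_N\|_{(\nu)}\le C\|f\|_{(\nu)}$ uniformly in $N\ge1$. On $\Omega_N$ and for $\varepsilon\in(0,1)$ all the increments $B^{H_1}_{r+\varepsilon}-\tilde B^{H_2}_s$ and $B^{H_1}_r-\tilde B^{H_2}_s$ with $0<r<s\le t$ lie in $[-2N,2N]$, and by the occupation-time formula~\eqref{sec5-eq5.222} the support of $\ell_t$, hence of $\ell'_t$, is contained in $[-2N,2N]$ on $\Omega_N$; therefore $J_\varepsilon(f)=J_\varepsilon(f\psi_N)$ and $I_t(f)=I_t(f\psi_N)$ there. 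Monotone convergence in $N$ combined with the previous step gives $E|J_\varepsilon(f)|^2\le C\|f\|_{(\nu)}^2$ for $\varepsilon\in(0,1)$ and $\|I_t(f)\|_{L^2}\le C\|f\|_{(\nu)}$. Finally, for every $N$,
\begin{equation*}
E\left|J_\varepsilon(f)+I_t(f)\right|\le E\left|J_\varepsilon(f\psi_N)+I_t(f\psi_N)\right| +P(\Omega_N^c)^{1/2}\bigl(\|J_\varepsilon(f)\|_{L^2}+\|I_t(f)\|_{L^2}\bigr),
\end{equation*}
and letting first $\varepsilon\to0$ (the first term vanishes, $f\psi_N$ being compactly supported) and then $N\to\infty$ shows $J_\varepsilon(f)\to-I_t(f)$ in $L^1(\Omega)$. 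This is the existence of the HQC and the Bouleau-Yor identity~\eqref{sce10-eq10.200}, and~\eqref{sce10-eq10.201} follows from the uniform bound $E|J_\varepsilon(f)|^2\le C\|f\|_{(\nu)}^2$ by Fatou along an a.s.\ convergent subsequence.

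\emph{The main difficulty} is the $\varepsilon$-uniform $L^2$ estimate for smooth $f$, which has already been isolated as Lemma~\ref{lem10.111} and rests on the Malliavin duality~\eqref{sec2-eq2.1}, the chain rule~\eqref{d-r-1}, the local-nondeterminism estimates of Lemmas~\ref{lemA-1.2} and~\ref{lem2.6}, and the kernel estimates of Lemma~\ref{lem10.1}. In the argument above the only subtle point is organisational: one must descend to a strictly smaller H\"older exponent $\nu'\in(\tfrac{2H_1-1}{H_1},\nu)$ so that the H\"older norm of the mollification error tends to $0$, and one must localise through $\{\Omega_N\}$ rather than argue by density in ${\mathcal C}^\nu$, since a bounded H\"older function need not be a $\|\cdot\|_{(\nu)}$-limit of compactly supported ones.
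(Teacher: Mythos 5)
Your proposal is correct, and its skeleton is the paper's: mollify, use the uniform $L^2$ bound of Lemma~\ref{lem10.111} for the approximation error, use Corollary~\ref{cor9.1} together with the occupation formula~\eqref{sec5-eq5.12} to handle smooth approximants, use Lemma~\ref{lem10.4} to pass to the limit in $\int_{\mathbb R}f(a)\ell'_t(a)\,da$, and localize to remove compact support. The difference is that you make rigorous two points the paper glosses over. First, the paper asserts that the mollifications $f_n$ converge to $f$ in ${\mathcal C}^\nu$ and then bounds $E|J_\varepsilon(f)-J_\varepsilon(f_n)|^2$ by $C\|f-f_n\|_{(\nu)}^2$; in fact smooth functions are not dense in the H\"older space ${\mathcal C}^\nu$ (only uniform convergence and convergence in every strictly weaker norm hold), and Lemma~\ref{lem10.111} as stated applies only to $C^\infty_0\cap{\mathcal C}^\nu$, not to the non-smooth difference $f-f_n$. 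Your descent to an intermediate exponent $\nu'\in\bigl(\tfrac{2H_1-1}{H_1},\nu\bigr)$, together with the pathwise bound $|J_\varepsilon(h)|\le 2\varepsilon^{-2H_1}\bigl(\int_0^t\!\int_0^s|B^{H_1}_{r+\varepsilon}-B^{H_1}_r|\,dr\,ds\bigr)\|h\|_\infty$ and Fatou, which extends the uniform estimate to merely H\"older compactly supported functions, is exactly what is needed to close this gap. Second, you build the cutoffs $\psi_N$ and the sets $\Omega_N$ into the proof, whereas the paper treats the support issue only in the informal localization remark following the theorem; your version also yields the $L^2$ estimate~\eqref{sce10-eq10.201} and the $L^1$ convergence defining the HQC for all bounded H\"older $f$ in one pass. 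One shared caveat: both your argument and the paper's really require $\nu>\tfrac{2H_1-1}{H_1}$ strictly (this is what Lemma~\ref{lem10.111} and your choice of $\nu'$ need), even though the theorem is stated with ``$\nu\ge$''; this is an inconsistency of the paper, not a defect of your proof.
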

\begin{proof}
Given $f\in {\mathcal C }^{\nu}$. Define the sequence of smooth functions
\begin{align}
f_{n}(x)&=\int_{\mathbb R}f(x-y)\zeta_n(y)dy=
\int_0^2f(x-\frac{y}n)\zeta(y)dy,\qquad n=1,2,\ldots
\end{align}
for all $x\in \mathbb R$, where the mollifiers $\zeta_n,n=1,2,\dots$ are given by~\eqref{sec8-eq8.4}. Then $\{f_{n}\}\subset C^{\infty}_0({\mathbb
R})\cap {\mathcal C }^{\nu}$, $f_{n}$ converges to $f$ in ${\mathcal C }^{\nu}$ and
$$
J_{\varepsilon}(H_1,H_2,t,f_n)\longrightarrow-\int_{{\mathbb R}}f_n(x)\ell'_t(x)dx
$$
in $L^2$ by Corollary~\ref{cor9.1}, as $\varepsilon$ tends to $0$, for all $n\geq 1$.

On the other hand, by Lemma~\ref{lem10.111} we have
\begin{align*}
E\left|J_{\varepsilon}(H_1,H_2,t,f)+\int_{\mathbb R}f(x)\ell'_t(x)dx\right|^2&\leq
3E\left|J_{\varepsilon}(H_1,H_2,t,f)-J_{\varepsilon}(H_1,H_2,t,f_n) \right|^2\\
&\hspace{-6cm}+3E\left|J_{\varepsilon}(H_1,H_2,t,f_n)+\int_{\mathbb R}f_n(x)\ell'_t(x)dx\right|^2 +3E\left|\int_{\mathbb R}f_n(x)\ell'_t(x)dx-\int_{\mathbb R}f(x)\ell'_t(x)dx\right|^2\\
&\hspace{-6cm}\leq 3C  \|f-f_n\|^2_{(\nu)}+3E\left|J_{\varepsilon}(H_1,H_2,t,f_n)+\int_{\mathbb R}f_n(x)\ell'_t(x)dx\right|^2\\
&+3E\left|\int_{\mathbb R}f_n(x)\ell'_t(x)dx-\int_{\mathbb R}f(x)\ell'_t(x)dx\right|^2
\end{align*}
for all $n$, $\varepsilon>0$ and $t\in [0,T]$. Thus, the theorem follows from Lemma~\ref{lem10.4}.
\end{proof}
It is possible to extend formula~\eqref{sce10-eq10.200} to any H\"older functions $f$ of order $\nu>\frac{2H_1-1}{H_1}$ by means of a localization argument. In fact, for any $k\geq 0$ and H\"older functions $f$ of order $\nu>\frac{2H_1-1}{H_1}$ we may consider the set
$$
\Omega_k=\left\{\sup_{0\leq t\leq T}|B^H_t|<k\right\}
$$
and let $f_k$ be a H\"older function such that
$$
f_k(x)=
\begin{cases}
f(-k), & {\text { if $x<-k$}},\\
f(x), & {\text { if $-k\leq x\leq k$}},\\
f(k), & {\text { if $x>k$}}.
\end{cases}
$$
Then $f_k\in {\mathcal C}^\nu$ with $\nu>\frac{2H_1-1}{H_1}$ for every $k\geq 0$. By the above theorem we know that
$$
[f_k(B^{H_1}-\tilde{B}^{H_2}),B^{H_1}]^{(HC)}_t=-\int_{{\mathbb R}}f_k(x)\ell'_t(x)dx
$$
on the set $\Omega_k$. Letting $k$ tend to infinity we get the desired formula~\eqref{sce10-eq10.200} for any H\"older function of order $\nu>\frac{2H_1-1}{H_1}$.

Finally, when $\frac1{H_1}+\frac1{H_2}\leq 3$ we can define
$$
\int_{{\mathbb R}}f(x)\ell_t(dx)=-\int_{{\mathbb R}}f'(x)\ell_t(x)dx
$$
for $f\in C^\infty_0({\mathbb R})\cap {\mathcal C}^\nu$ with
$\nu>\frac12\left(3-\frac1{H_1}-\frac1{H_2}\right)$. It follows from Corollary~\ref{cor9.1}, the occupation formula~\eqref{sec5-eq5.222} and Lemma~\ref{lem10.4} that
\begin{align*}
[f(B^{H_1}-\tilde{B}^{H_2}),B^{H_1}]^{(HC)}&=\int_0^t \int_0^sf'(B^{H_1}_r-\tilde{B}^{H_2}_s)drds\\
&=\int_{{\mathbb R}}f'(x)\ell_t(x)dx=-\int_{{\mathbb R}}f(x)\ell_t(dx)
\end{align*}
for $f\in C^\infty_0({\mathbb R})\cap {\mathcal C}^\nu$ with
$\nu>\frac12\left(3-\frac1{H_1}-\frac1{H_2}\right)$. Thus, by smooth approximating and the localization argument above we get the next result since $\frac{2H_1-1}{H_1}>\frac12\left(3-\frac1{H_1}-\frac1{H_2}\right)$. \begin{theorem}\label{th10.2}
If $f$ is a H\"older function of order $\nu>\frac{2H_1-1}{H_1}$, then the HQC exists and the Bouleau-Yor type identity~\eqref{sce10-eq10.200} holds for all $t\in [0,T]$.
\end{theorem}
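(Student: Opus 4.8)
The plan is to obtain Theorem~\ref{th10.2} from Theorem~\ref{th10.1} by a localization argument that simultaneously removes the global boundedness implicit in membership in ${\mathcal C}^{\nu}$ and drops the hypothesis $\frac1{H_1}+\frac1{H_2}>3$; in the range $\frac1{H_1}+\frac1{H_2}\le3$, where $\ell'_t$ was not constructed, the right-hand side of~\eqref{sce10-eq10.200} is to be read as the Young integral $-\int_{\mathbb R}f(a)\ell_t(da)=\int_{\mathbb R}f'(a)\ell_t(a)\,da$.

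\textbf{Localization.} For a H\"older $f$ of order $\nu$ and $k\ge1$, I would set $\Omega_k=\{\sup_{0\le u\le T+1}|B^{H_1}_u|+\sup_{0\le u\le T+1}|\tilde B^{H_2}_u|<k\}$ and let $f_k\in{\mathcal C}^{\nu}$ have compact support and coincide with $f$ on $[-k,k]$. For $0<\varepsilon<1$, on $\Omega_k$ every argument $B^{H_1}_{r+\varepsilon}-\tilde B^{H_2}_s$ with $0\le r\le s\le t\le T$ appearing in $J_\varepsilon(H_1,H_2,t,f)$ lies in $(-k,k)$, so $J_\varepsilon(H_1,H_2,t,f)=J_\varepsilon(H_1,H_2,t,f_k)$ there, while $a\mapsto\ell'_t(a)$ (resp.\ $a\mapsto\ell_t(a)$) is a.s.\ supported in $[-k,k]$ on $\Omega_k$, so the right-hand side of~\eqref{sce10-eq10.200} is unchanged on $\Omega_k$ under $f\mapsto f_k$. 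Since $\Omega_k\uparrow\Omega$ a.s., it suffices to prove~\eqref{sce10-eq10.200} for $f\in{\mathcal C}^{\nu}$ with compact support.

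\textbf{The two regimes.} If $\frac1{H_1}+\frac1{H_2}>3$, such an $f$ lies in ${\mathcal C}^{\nu}$ and~\eqref{sce10-eq10.200} is exactly Theorem~\ref{th10.1}. If $\frac1{H_1}+\frac1{H_2}\le3$, then by Theorem~\ref{th5.2} the map $a\mapsto\ell_t(a)$ is a.s.\ H\"older of every order below $\frac1{2H_1}+\frac1{2H_2}-\frac12$, and since $\frac{2H_1-1}{H_1}>\frac12\bigl(3-\frac1{H_1}-\frac1{H_2}\bigr)$ on $\frac12\le H_1\le H_2<1$ (it reduces to $\frac1{H_1}-\frac1{H_2}<1$, true because $\frac1{H_1}\le2<1+\frac1{H_2}$), we may fix $\nu'\in\bigl(\frac12(3-\frac1{H_1}-\frac1{H_2}),\,\nu\bigr)$; Lemma~\ref{lem10.4} then makes $\int_{\mathbb R}g(a)\ell_t(da)$ well-defined and continuous in $g$ for the ${\mathcal C}^{\nu'}$-norm. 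For $g\in C^\infty_0({\mathbb R})\cap{\mathcal C}^{\nu'}$, Corollary~\ref{cor9.1} gives $J_\varepsilon(H_1,H_2,t,g)\to\int_0^t ds\int_0^sg'(B^{H_1}_r-\tilde B^{H_2}_s)\,dr$ in $L^2$ as $\varepsilon\to0$, which by the occupation formula~\eqref{sec5-eq5.222} applied to the continuous function $g'$ equals $\int_{\mathbb R}g'(a)\ell_t(a)\,da=-\int_{\mathbb R}g(a)\ell_t(da)$, so~\eqref{sce10-eq10.200} holds for $g$. Finally, since $f$ has compact support, the mollifications $f_n:=f*\zeta_n$ lie in $C^\infty_0\cap{\mathcal C}^{\nu'}$ and, as $\nu'<\nu$, $f_n\to f$ in ${\mathcal C}^{\nu'}$; combining $J_\varepsilon(H_1,H_2,t,f_n)\to-\int_{\mathbb R}f_n(a)\ell_t(da)$ in $L^2$ (as $\varepsilon\to0$, for each $n$), $\int_{\mathbb R}f_n(a)\ell_t(da)\to\int_{\mathbb R}f(a)\ell_t(da)$ in $L^2$ (Lemma~\ref{lem10.4}), and $E|J_\varepsilon(H_1,H_2,t,f-f_n)|^2\le C\|f-f_n\|_{(\nu')}^2$ uniformly in $\varepsilon$ (Lemma~\ref{lem10.111}), a standard argument yields $J_\varepsilon(H_1,H_2,t,f)\to-\int_{\mathbb R}f(a)\ell_t(da)$ in $L^2$, which is~\eqref{sce10-eq10.200}.

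\textbf{Main obstacle.} The delicate ingredient is the regime $\frac1{H_1}+\frac1{H_2}\le3$: with $\ell'_t$ unavailable, $-\int f(a)\ell'_t(a)\,da$ can only be given meaning as a Young integral against $\ell_t(da)$, whose existence — and hence the whole reduction — rests on the sum of the H\"older exponents of $f$ and of $a\mapsto\ell_t(a)$ exceeding $1$, i.e.\ on the inequality $\frac{2H_1-1}{H_1}>\frac12(3-\frac1{H_1}-\frac1{H_2})$, together with the uniform-in-$\varepsilon$ estimate of Lemma~\ref{lem10.111} that licenses interchanging $\varepsilon\to0$ and $n\to\infty$. Everything else is bookkeeping.
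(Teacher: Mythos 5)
Your proposal is correct and follows essentially the same route as the paper: localize to reduce to a bounded (compactly supported) H\"older function, invoke Theorem~\ref{th10.1} when $\frac1{H_1}+\frac1{H_2}>3$, and in the regime $\frac1{H_1}+\frac1{H_2}\le 3$ read the right-hand side as the Young integral $-\int_{\mathbb R}f(a)\ell_t(da)$, combining Corollary~\ref{cor9.1}, the occupation formula~\eqref{sec5-eq5.222}, Lemma~\ref{lem10.4} and the uniform bound of Lemma~\ref{lem10.111} with smooth approximation, the whole reduction resting on $\frac{2H_1-1}{H_1}>\frac12\bigl(3-\frac1{H_1}-\frac1{H_2}\bigr)$. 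You merely spell out more explicitly the details the paper leaves to "smooth approximating and the localization argument" (the choice of the intermediate exponent $\nu'$ and the interchange of the limits in $\varepsilon$ and $n$), so there is no substantive difference.
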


\begin{corollary}
Let $B$ and $\tilde{B}$ be two independent Brownian motions and let $f$ be a H\"older function of order $\nu\in (0,1]$. Then, the HQC $[f(B-\tilde{B}),B]^{(HC)}$ and the Young integral
$$
\int_{{\mathbb R}}f(x)\ell_t(dx)=\int_{{\mathbb R}}f(x)\ell'_t(x)dx
$$
exist, and the Bouleau-Yor type identity
\begin{align}
[f(B-\tilde{B}),B]^{(HC)}_t=-\int_{{\mathbb R}}f(x)\ell'_t(x)dx,,
\end{align}
holds for all $t\in [0,T]$, where $\ell'_t(x)$ is the DILT of $B$ and $\tilde{B}$.
\end{corollary}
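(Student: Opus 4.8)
The plan is to recognise this statement as the endpoint case $H_1=H_2=\frac12$ of Theorem~\ref{th10.2} and to push the argument of that theorem through at the boundary value of the Hurst index. Note first that for $H_1=H_2=\frac12$ one has $\frac1{H_1}+\frac1{H_2}=4>3$ and $\frac{2H_1-1}{H_1}=0$, so every hypothesis invoked in Sections~\ref{sec3} and~\ref{sec10} is met for every $\nu\in(0,1]$. Consequently Theorems~\ref{th5.1} and~\ref{th5.2} and the corollary that follows Theorem~\ref{th5.2} apply and yield that $\ell_t(a)$ and $\ell'_t(a)$ exist, are jointly H\"older continuous in $(a,t)$, and satisfy $\ell'_t(a)=\frac{\partial}{\partial a}\ell_t(a)$; in particular $\int_{\mathbb R}f(x)\ell_t(dx)=\int_{\mathbb R}f(x)\ell'_t(x)dx$, and by the first part of Lemma~\ref{lem10.4} (using $\frac1{H_1}+\frac1{H_2}=4>3$ and $H_1=\frac12\ge\frac12$) this Young integral is well defined and depends continuously on $f\in\mathcal{C}^\nu$ for every $\nu>0$. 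This settles the existence of the Young integral and leaves only the existence of the HQC and the identity.

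For the HQC I would rerun the proof of Theorem~\ref{th10.1} (together with the localization remark immediately after it) with $H_1=H_2=\frac12$. The chain is: the $L^2$ bound $E|J_\varepsilon(\tfrac12,\tfrac12,t,f)|^2\le C\|f\|_{(\nu)}^2$ for $f\in C^\infty_0(\mathbb R)\cap\mathcal{C}^\nu$ coming from Lemma~\ref{lem10.111}; mollification $f_n\to f$ in $\mathcal{C}^\nu$; Corollary~\ref{cor9.1} together with the occupation formula~\eqref{sec5-eq5.12}, which identifies $\lim_{\varepsilon\to0}J_\varepsilon(\tfrac12,\tfrac12,t,f_n)$ with $\int_0^t ds\int_0^s f_n'(B_r-\tilde B_s)\,dr=-\int_{\mathbb R}f_n(x)\ell'_t(x)\,dx$; and the continuity~\eqref{sce10-eq10.3} of the Young integral to pass $n\to\infty$. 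This produces the HQC and the identity~\eqref{sce10-eq10.200} for all $f\in\mathcal{C}^\nu$ with bounded support, and the localization over $\Omega_k=\{\sup_{0\le t\le T}|B^{H_1}_t|<k\}$ removes the boundedness restriction, delivering the claim for every H\"older $f$ of order $\nu\in(0,1]$.

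The one point that genuinely requires attention is that the auxiliary estimates behind Lemma~\ref{lem10.111} --- namely Lemmas~\ref{lemA-1.2}, \ref{lem2.6} and~\ref{lem10.1}, which as stated assume the Hurst index to be strictly above $\frac12$ --- remain valid at the value $\frac12$. Here the verification is easy rather than hard: for $H=\frac12$ the increments of $B$ over disjoint time intervals are independent, so, for instance, $E[B_r(B_t-B_s)]=0$ and $E[(B_t-B_s)(B_{t'}-B_{s'})]=0$ for disjoint increments, while $|E[B_{r+\varepsilon}(B_{r'+\varepsilon}-B_{r'})]|\le\varepsilon$; with $2H-1=0$ and $2-2H=1$ all the bounds claimed in those lemmas hold, several of them trivially as $0\le(\text{nonnegative})$. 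Substituting these into the decomposition of $\Psi_\varepsilon(j;H_1,H_2)$ reproduces $|\Gamma_j|\le C\|f\|_{(\nu)}^2$ for $j=1,\dots,5$ exactly as in Lemmas~\ref{lem10.3} and~\ref{lem10.111}.

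The main obstacle is therefore purely bookkeeping at the boundary $H=\frac12$ rather than any new analytic difficulty; I would present it as a short lemma recording the $H=\frac12$ versions of the three appendix estimates and then invoke the proof of Theorem~\ref{th10.2} verbatim. As an alternative that avoids the appendix entirely, one may exploit that $B=B^{1/2}$ is a semimartingale: for smooth $f$ the forward double integral $J_\varepsilon$ can be evaluated by It\^o's formula, the quadratic-variation correction terms being of lower order, after which one again approximates and localizes; but I would keep the specialization of Theorem~\ref{th10.2} as the main line since it is shorter and reuses the machinery already developed.
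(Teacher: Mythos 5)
Your proposal is correct and follows essentially the same route as the paper, which states this corollary as a direct specialization of Theorem~\ref{th10.2} (via Theorem~\ref{th10.1}, Lemma~\ref{lem10.4} and the localization argument) to $H_1=H_2=\tfrac12$, where $\tfrac1{H_1}+\tfrac1{H_2}=4>3$ and $\tfrac{2H_1-1}{H_1}=0$ so every $\nu\in(0,1]$ is admissible. Your additional check that the appendix estimates (Lemmas~\ref{lemA-1.2}, \ref{lem2.6}, \ref{lem10.1}), nominally stated for Hurst index strictly above $\tfrac12$, survive at the boundary value is a sensible piece of bookkeeping consistent with the paper's intent (Section~\ref{sec10} is announced for $H_1\geq\tfrac12$), not a different method.
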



\section{Appendix: Proofs of some basic estimates}\label{app2}

In this appendix we give proofs of some lemmas.

\begin{proof}[Proof of Lemma~\ref{lemA1-3} and Lemma~\ref{lemA-1.2}]
The inequalities~\eqref{sec2-Lemma1-1} and~\eqref{sec2-Lemma2-1} are some simple exercises. Let us obtain~\eqref{app1-eq1.3} and~\eqref{app1-eq1.3-0}.

For $0<s'<t'<s<t\leq T$ we define the function $x\mapsto G_{s,t}(x)$
on $[s',t']$ by
$$
G_{s,t}(x)=(s-x)^{2H}-(t-x)^{2H}.
$$
Thanks to mean value theorem, we see that there are $\xi\in (s',t')$
and $\eta\in (s,t)$ such that
\begin{align*}
2E\left[(B^H_t-B^H_s)(B^H_{t'}-B^H_{s'})\right]
&=G_{s,t}(t')-G_{s,t}(s')\\
&=2H(t'-s')\left[(t-\xi)^{2H-1}-(s-\xi)^{2H-1}\right]\\
&=2H(2H-1)(t'-s')(t-s)\left(\eta-\xi\right)^{2H-2},
\end{align*}
which gives
\begin{align}\label{app1-eq1.4}
|E\left[(B^H_t-B^H_s)(B^H_{t'}-B^H_{s'})\right]|\leq 2H|2H-1|
\frac{(t'-s')(t-s)}{(s-t')^{2-2H}},
\end{align}
which gives~\eqref{app1-eq1.3-0}. In order to prove~\eqref{app1-eq1.3}, noting that
$$
\frac{|E\left[(B^H_t-B^H_s)(B^H_{t'}-B^H_{s'})\right]|}{
(t-s)^H(t'-s')^H} \leq 1,
$$
we see that
\begin{align*}
&\frac{|E[(B^H_t-B^H_s)(B^H_{t'}-B^H_{s'})]|}{(t-s)^H(t'-s')^H}\leq
\left(\frac{|E\left[(B^H_t-B^H_s)(B^H_{t'}-B^H_{s'})\right]|}{
(t-s)^H(t'-s')^H}\right)^\alpha
\end{align*}
for all $\alpha\in [0,1]$. Combining this with~\eqref{app1-eq1.4} we get
\begin{equation}\label{app1-eq1.4-1}
|E[(B^H_t-B^H_s)(B^H_{t'}-B^H_{s'})]|\leq
\frac{(t-s)^{(1-\alpha)H+\alpha}
(t'-s')^{(1-\alpha)H+\alpha}}{(s-t')^{\alpha(2-2H)}}.
\end{equation}
Since $0<H<\frac12$ we can take $\alpha=H/(1-H)$ and~\eqref{app1-eq1.3} follows.
\end{proof}

\begin{proof}[Proof of Lemma~\ref{lem2.5}]
By symmetry we may assume that $s>s'$. For $0<r'<r<s'<s$, taking $r'=xr$, $s'=ys$ and $0\leq x,y\leq 1$, we have
$$
\lambda_{r',s'}=r^{2H_{1}}x^{2H_{1}}+s^{2H_{2}}y^{2H_2}
$$
and
\begin{align*}
\mu=\frac12r^{2H_1}\left(1+x^{2H_1}-(1-x)^{2H_1}\right)+
\frac12s^{2H_2}\left(1+y^{2H_2}-(1-y)^{2H_2}\right).
\end{align*}
Define the functions :
$$
f_{H}(x):=4x^{2H}-\left(1+x^{2H}-(1-x)^{2H}\right)^{2},
$$
$$
g(x,y):=4\left(x^{2H_{1}}+y^{2H_2}\right) -2\left(1+x^{2H_1}-(1-x)^{2H_1}\right) \left(1+y^{2H_2}-(1-y)^{2H_2}\right)
$$
with $x\in [0,1]$ and $0<H,H_1,H_2<1$. Then
\begin{equation}
\lambda_{r,s}\lambda_{r',s'} -\mu^2=\frac14\left\{r^{4H_{1}}f_{H_1}(x) +r^{2H_{1}}s^{2H_{2}}g(x,y)+s^{4H_{2}}f_{H_2}(y)\right\}.
\end{equation}
It follows from the next lemma that
\begin{align*}
\lambda_{r,s}\lambda_{r',s'} -\mu^2& \asymp r^{4H_{1}}x^{2H_1}(1-x)^{2H_2}+s^{4H_{2}}y^{2H_2}(1-y)^{2H_1}\\
&\qquad\qquad
+r^{2H_{1}}s^{2H_{2}}\left(x^{2H_1}(1-y)^{2H_2}+y^{2H_2}(1-x)^{2H_1} \right)\\
&\asymp \left(r^{2H_{1}} x^{2H_1}+s^{2H_{2}}y^{2H_2}\right)
\left(r^{2H_{1}}(1-x)^{2H_1}+s^{2H_{2}}(1-y)^{2H_2}\right)\\
&\asymp \left((r')^{2H_{1}}+(s')^{2H_{2}}\right)
\left((r-r')^{2H_1}+(s-s')^{2H_2}\right).
\end{align*}
Similarly, we can estimate the $\rho=\lambda_{r,s}\lambda_{r',s'} -\mu^2$ for $0<r<r'<s'<s$ and $0<r'<s'<r<s$, and the lemma follows.
\end{proof}
\begin{lemma}\label{lem3.1}
Let the functions $f_H(x)$ and $g(x,y)$ be defined as above. We then have
\begin{align}\label{sec3-eq3.5}
f_H(x)&\asymp x^{2H}(1-x)^{2H}\\  \label{sec3-eq3.19}
g(x,y)&\asymp x^{2H_1}(1-y)^{2H_2}+y^{2H_2}(1-x)^{2H_1}
\end{align}
for all $x,y\in [0,1]$.
\end{lemma}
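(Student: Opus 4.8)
The plan is to reduce both equivalences to the elementary fact that a continuous, strictly positive function on a compact set is bounded above and below by positive constants; the real work lies in locating the zero sets of $f_H$ and $g$ and in analysing the quotients near those zeros.

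I would treat \eqref{sec3-eq3.5} first. A direct expansion gives the factorisation
\[
f_H(x)=\Big[\big(x^{H}+(1-x)^{H}\big)^2-1\Big]\Big[1-\big(x^{H}-(1-x)^{H}\big)^2\Big].
\]
The second bracket equals $\big(1-x^{H}+(1-x)^{H}\big)\big(1+x^{H}-(1-x)^{H}\big)>0$ on $(0,1)$, and the first bracket is positive there because $x^{H}+(1-x)^{H}>x+(1-x)=1$ (using $t^{H}>t$ on $(0,1)$ for $H<1$); at $x=0,1$ both $f_H$ and $x^{2H}(1-x)^{2H}$ vanish. A Taylor expansion at $0$, with $(1-x)^{H}=1-Hx+o(x)$ and $x^{H}=o(1)$, $x=o(x^{H})$ since $0<H<1$, shows that each bracket is $2x^{H}(1+o(1))$, hence $f_H(x)=4x^{2H}(1+o(1))$; by the symmetry $f_H(x)=f_H(1-x)$ the same holds as $x\to1$. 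Thus $x\mapsto f_H(x)\big/\big(x^{2H}(1-x)^{2H}\big)$ extends to a function that is continuous and strictly positive on all of $[0,1]$ (with value $4$ at the endpoints), and \eqref{sec3-eq3.5} follows.

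For \eqref{sec3-eq3.19}, the key step is the identity (obtained by expanding $g$ and using $f_{H_i}(x)=2x^{2H_i}+2(1-x)^{2H_i}-1-\big(x^{2H_i}-(1-x)^{2H_i}\big)^2$)
\[
g(x,y)=f_{H_1}(x)+f_{H_2}(y)+\big(x^{2H_1}-(1-x)^{2H_1}-y^{2H_2}+(1-y)^{2H_2}\big)^2 .
\]
In particular $g\ge0$; since $f_{H_i}$ vanishes only at $\{0,1\}$, a short check shows $g$ vanishes precisely at $(0,0)$ and $(1,1)$, which is exactly the zero set of $x^{2H_1}(1-y)^{2H_2}+y^{2H_2}(1-x)^{2H_1}$. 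Off any fixed neighbourhoods of these two corners, both functions are continuous and strictly positive on a compact set, so their ratio is trapped between positive constants. Near $(0,0)$ I would argue: the factors $(1-x)^{2H_1},(1-y)^{2H_2}$ are bounded away from $0$, so $x^{2H_1}(1-y)^{2H_2}+y^{2H_2}(1-x)^{2H_1}\asymp x^{2H_1}+y^{2H_2}$; by \eqref{sec3-eq3.5}, $f_{H_1}(x)+f_{H_2}(y)\asymp x^{2H_1}+y^{2H_2}$; and the squared term is $o(x^{2H_1}+y^{2H_2})$, since $1-(1-x)^{2H_1}\le Cx$ gives $|1+x^{2H_1}-(1-x)^{2H_1}|\le Cx^{2H_1\wedge1}$, whose square is negligible because $2(2H_1\wedge1)>2H_1$, and likewise in $y$. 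Hence $g\asymp x^{2H_1}+y^{2H_2}\asymp x^{2H_1}(1-y)^{2H_2}+y^{2H_2}(1-x)^{2H_1}$ near $(0,0)$; the substitution $(x,y)\mapsto(1-x,1-y)$, which leaves both $g$ and the right-hand side invariant, handles $(1,1)$. Patching the three regions gives \eqref{sec3-eq3.19}.

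The main obstacle is the behaviour of $g$ at the two corners of the square: there the quotient against $x^{2H_1}(1-y)^{2H_2}+y^{2H_2}(1-x)^{2H_1}$ need not converge (the limit can depend on the direction of approach), so one cannot merely invoke continuous extension as for $f_H$; genuine two-sided bounds are required, and these are precisely what the additive identity for $g$ together with the estimate $1-(1-x)^{2H_1}\le Cx$ deliver. The remaining ingredients — the factorisation of $f_H$, the Taylor expansions, and the compactness argument — are routine.
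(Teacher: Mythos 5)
Your proof is correct: the factorization $f_H(x)=\bigl[(x^H+(1-x)^H)^2-1\bigr]\bigl[1-(x^H-(1-x)^H)^2\bigr]$ and the identity $g(x,y)=f_{H_1}(x)+f_{H_2}(y)+\bigl(x^{2H_1}-(1-x)^{2H_1}-y^{2H_2}+(1-y)^{2H_2}\bigr)^2$ both check out by direct expansion, the zero sets of $g$ and of $x^{2H_1}(1-y)^{2H_2}+y^{2H_2}(1-x)^{2H_1}$ do coincide with $\{(0,0),(1,1)\}$, and the corner/compactness patching is sound. But your route is genuinely different from the paper's. For \eqref{sec3-eq3.5} the paper simply cites Yan et al.\ and Chen--Yan, whereas you give a self-contained argument; for \eqref{sec3-eq3.19} the paper argues globally and purely algebraically: it writes $g(x,y)=2\bigl[x^{2H_1}(1-y)^{2H_2}+y^{2H_2}(1-x)^{2H_1}+A(x,y)\bigr]$ with $A(x,y)=x^{2H_1}+y^{2H_2}+(1-x)^{2H_1}+(1-y)^{2H_2}-(1-x)^{2H_1}(1-y)^{2H_2}-x^{2H_1}y^{2H_2}-1$, splits into the cases $\min\{H_1,H_2\}\le\frac12$ and $\min\{H_1,H_2\}\ge\frac12$, and bounds $A$ from above and below using elementary inequalities such as $(1+x)^\alpha\le 1+(2^\alpha-1)x^\alpha$, obtaining explicit constants (e.g.\ $2\bigl(2+2^{2H_2}\bigr)$ and $4\bigl[1-(2^{H_1}-1)(2^{H_2}-1)\bigr]$) with no limiting argument. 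Your sum-of-squares identity instead reduces the two-variable estimate to the one-variable one plus a local analysis at the two common zeros, which is conceptually cleaner and makes the nonnegativity of $g$ transparent, but it buys this at the price of non-explicit constants coming from continuity and compactness. Two small remarks: your $o(\cdot)$ estimate for the squared term is more than is needed, since the lower bound near $(0,0)$ follows by simply dropping the nonnegative square and the upper bound only needs $\bigl|1+x^{2H_1}-(1-x)^{2H_1}\bigr|^2\le Cx^{2H_1}$ on $[0,1]$; and it is worth stating explicitly that $g$ (like the right-hand side) is invariant under $(x,y)\mapsto(1-x,1-y)$, which follows at once from your identity because $f_{H_i}(1-x)=f_{H_i}(x)$ and the bracket in the square changes sign.
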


The estimates~\eqref{sec3-eq3.5} are given in Yan et al.~\cite{Yan7} (see also Chen-Yan~\cite{Chen-Yan}), and moreover similar to Chen-Yan~\cite{Chen-Yan} we can obtain~\eqref{sec3-eq3.19}.


Now, let us prove the inequalities in Lemma~\ref{lem2.6}. Let $0<r<s<t$, $0<r'<s'<t$, $s>s'$ and $\varepsilon>0$.

\begin{proof}[Proof of~\eqref{sec2-eq2.8}]
Let $r>r'$. Similar to proof of Lemma~\ref{lem2.5}, setting $r'+\varepsilon=x(r+\varepsilon)$, $s'=ys$ and $r'=z(r+\varepsilon)$ we get
\begin{align*}
\rho_{\varepsilon,\varepsilon}^2-\rho_{\varepsilon,0}^2
&=\frac14(r+\varepsilon)^{4H_{1}}\left\{f_{H_1}(x)-f_{H_1}(z)\right\} +\frac14(r+\varepsilon)^{2H_{1}}s^{2H_{2}}\left\{g(x,y)-g(z,y)\right\}\\
&=\frac14(r+\varepsilon)^{4H_{1}}(x-z)f'_{H_1}(\xi) +\frac14(r+\varepsilon)^{2H_{1}}s^{2H_{2}}(x-z)\frac{\partial g}{\partial x}(\eta,y)
\end{align*}
for some $\xi,\eta\in(z,x)$ by the mean value theorem. On the other hand, we have
\begin{align*}
|f'_{H}(x)|&=4H\left|2x^{2H-1}-(x^{2H-1}+(1-x)^{2H-1})(1+x^{2H}-(1-x)^{2H})\right|\\
&=4H\left|2x^{2H-1}-x^{2H-1}(1+x^{2H}-(1-x)^{2H})-(1-x)^{2H-1}
\left(1+x^{2H}-(1-x)^{2H}\right)\right|\\
&\leq 4H\left|2x^{2H-1}-x^{2H-1}(1+x^{2H}-(1-x)^{2H})\right|\\
&\qquad\qquad+4H\left|(1-x)^{2H-1}
\left(1+x^{2H}-(1-x)^{2H}\right)\right|\\
&= 4Hx^{2H-1}\left(1-x^{2H}+(1-x)^{2H}\right)+4H(1-x)^{2H-1}
\left(1+x^{2H}-(1-x)^{2H}\right)\\
&\leq C\left(x^{2H-1}(1-x)+(1-x)^{2H}x\right)\leq C x^{2H-1}(1-x)
\end{align*}
for all $x\in [0,1]$ and $H\in [\frac12,1]$, and
\begin{align*}
|g'_x(x,y)|&=4H |2x^{2H_{1}-1}-\left(x^{2H_1-1}+(1-x)^{2H_1-1}\right) \left(1+y^{2H_2}-(1-y)^{2H_2}\right)|\\
&=4H |2x^{2H_{1}-1}-x^{2H_1-1}\left(1+y^{2H_2}-(1-y)^{2H_2}\right)\\
&\qquad\qquad-4H_1 (1-x)^{2H_1-1}\left(1+y^{2H_2}-(1-y)^{2H_2}\right)|\\
&\leq 4H x^{2H_{1}-1}\left(1-y^{2H_2}+(1-y)^{2H_2}\right)\\
&\qquad\qquad+4H_1 (1-x)^{2H_1-1}\left(1+y^{2H_2}-(1-y)^{2H_2}\right)\\
&\leq C\left(x^{2H_{1}-1}(1-y)+(1-x)^{2H_1-1}y\right)
\end{align*}
for all $x,y\in [0,1]$ and $H_1,H_2\in [\frac12,1]$. It follows that
\begin{align*}
|\rho_{\varepsilon,\varepsilon}^2-\rho_{\varepsilon,0}^2|
&\leq (r+\varepsilon)^{4H_{1}-1}\varepsilon|f'_{H_1}(\xi)| +(r+\varepsilon)^{2H_{1}-1}s^{2H_{2}}\varepsilon|\frac{\partial g}{\partial x}(\eta,y)|\\
&\leq C\xi^{2H-1}(1-\xi)(r+\varepsilon)^{4H_{1}-1}\varepsilon\\ &\qquad+C(r+\varepsilon)^{2H_{1}-1}s^{2H_{2}}\varepsilon \left(\eta^{2H_{1}-1}(1-y)+(1-\eta)^{2H_1-1}y\right)\\
&\leq Cx^{2H-1}(1-z)(r+\varepsilon)^{4H_{1}-1}\varepsilon\\ &\qquad+C(r+\varepsilon)^{2H_{1}-1}s^{2H_{2}}\varepsilon \left(x^{2H_{1}-1}(1-y)+(1-z)^{2H_1-1}y\right)\\
&\leq C\varepsilon\left\{(r+\varepsilon)^{2H_1-1}\vee s^{2H_2-1}\right\}
\left\{(r'+\varepsilon)^{2H_1-1}\vee (s')^{2H_2-1}\right\}\\
&\qquad\cdot\left(|r+\varepsilon-r'|+|r+\varepsilon-r'|^{2H_1-1}+|s-s| \right)\\
&\leq C\varepsilon\left\{(r+\varepsilon)^{2H_1-1}\vee s^{2H_2-1}\right\}\left\{(r'+\varepsilon)^{2H_1-1}\vee (s')^{2H_2-1}\right\}\\
&\qquad\cdot\left(|r+\varepsilon-r'|^{2H_1-1}\vee |s-s|^{2H_2-1} \right)
\end{align*}
for all $r>r'$ and $s>s'$. Thus, for all $r>r'$ and $s>s'$ we have
\begin{align*}
|\rho_{\varepsilon,\varepsilon}-\rho_{\varepsilon,0}|
&=\frac{|\rho_{\varepsilon,\varepsilon}^2-\rho_{\varepsilon,0}^2|}{ \rho_{\varepsilon,\varepsilon}+\rho_{\varepsilon,0}}\\
&\leq \frac{C|\rho_{\varepsilon,\varepsilon}^2-\rho_{\varepsilon,0}^2|}{ \left\{(r'+\varepsilon)^{H_1}+(s')^{H_2}\right\} |s-s'|^{H_2}+(s')^{H_2}\left\{|r+\varepsilon-r'|^{H_1} +|s-s'|^{H_2}\right\}}\\
&\leq \frac{C|\rho_{\varepsilon,\varepsilon}^2-\rho_{\varepsilon,0}^2|}{ (r'+\varepsilon)^{\alpha H_1}(s')^{(1-\alpha)H_2}
|s-s'|^{H_2}+(s')^{H_2}|r+\varepsilon-r'|^{\beta H_1}|s-s'|^{(1-\beta)H_2}}\\
&\leq \frac{C\varepsilon|\rho_{\varepsilon,\varepsilon}^2-\rho_{\varepsilon,0}^2| }{ (r'+\varepsilon)^{\gamma\alpha H_1}(s')^{(\gamma(1-\alpha)+(1-\gamma))H_2} |r+\varepsilon-r'|^{(1-\gamma)\beta H_1}|s-s'|^{((1-\gamma)(1-\beta)+\gamma)H_2}}
\end{align*}
for all $\alpha,\beta,\gamma\in (0,1)$ by Young's inequality. Now, we can take some suitable $\beta_1,\beta_2,\beta_3,\beta_4\in (0,1)$ such that
\begin{align*}
|\rho_{\varepsilon,\varepsilon}-\rho_{\varepsilon,0}|
&\leq \frac{C\varepsilon\left\{(r+\varepsilon)^{2H_1-1}\vee s^{2H_2-1}\right\}}{(r'+\varepsilon)^{\beta_1} (s')^{\beta_2} |r+\varepsilon-r'|^{\beta_3}|s-s'|^{\beta_4}}.
\end{align*}
Moreover, we can also choose the regions of $\beta_1,\beta_2,\beta_3,\beta_4$ as follows
\begin{equation}\label{eq202020}
0<\beta_1, \beta_3<1-H_1,\quad 0<\beta_2, \beta_4<1-H_2.
\end{equation}
In fact, the above conditions can be gotten via comparing the size of $r'+\varepsilon$ and $s'$, and $|r-r'+\varepsilon|$ and $|s-s'|$, respectively. For example, if $(r'+\varepsilon)^{2H_1-1}>(s')^{2H_2-1}$ and $|r-r'+\varepsilon|^{2H_1-1}>|s-s'|$ we then have
\begin{align*}
|\rho_{\varepsilon,\varepsilon}-\rho_{\varepsilon,0}|
&\leq \frac{C\varepsilon\left\{(r+\varepsilon)^{2H_1-1}\vee s^{2H_2-1}\right\}}{(r'+\varepsilon)^{\beta_1}(s')^{\beta_2} |r+\varepsilon-r'|^{\beta_3}|s-s'|^{\beta_4}},
\end{align*}
where
\begin{align*}
&\beta_1:=\gamma\alpha H_1-2H_1+1,\quad\beta_2:= (\gamma(1-\alpha)+(1-\gamma))H_2,\\
&\beta_3:=(1-\gamma)\beta H_1-2H_1+1,\quad\beta_4:=((1-\gamma)(1-\beta)+\gamma)H_2.
\end{align*}
Clearly,
$$
\gamma\alpha H_1-2H_1+1<1-H_1,\quad (1-\gamma)\beta H_1-2H_1+1<1-H_1,
$$
and we can choose $\frac{2H_2-1}{H_2}<\gamma\alpha<1$ and $\frac{2H_1-1}{H_1}<\beta(1-\gamma)<1$ such that~\eqref{eq202020} holds.

Thus, we have obtained~\eqref{sec2-eq2.8} for $r>r'$ and $s>s'$ by taking $\alpha_1=\beta_1\vee \beta_3$ and $\alpha_2=\beta_2\vee \beta_4$. Similarly, we can obtain~\eqref{sec2-eq2.8} for $r<r'$ and $s>s'$.
\end{proof}

\begin{proof}[Proof of~\eqref{sec2-eq2.9}]
By~\eqref{sec2-eq2.8} and Lemma~\ref{lem2.5} we have
\begin{align*}
\frac{\mu_{\varepsilon,0}}{ \rho_{\varepsilon,0}\rho_{\varepsilon,\varepsilon}}&\left|
\rho_{\varepsilon,\varepsilon}-\rho_{\varepsilon,0}\right|\leq C
\frac{((r+\varepsilon)^{H_1}+s^{H_2})}{
(r'+\varepsilon)^{H_1}(s')^{H_2}|r-r'|^{H_1}|s-s'|^{H_2}} \left|\rho_{\varepsilon,\varepsilon}-\rho_{\varepsilon,0}\right|\\
&\leq C\varepsilon \left\{(r+\varepsilon)^{3H_1-1}\vee s^{3H_2-1}\right\}(r'\Lambda_\varepsilon(r,r'))^{-\alpha_1-H_1} (s'|s-s'|)^{-\alpha_2-H_2}
\end{align*}
for all $0<r<s<t$, $0<r'<s'<t$, $0<\alpha_1<1-H_1$ and $0<\alpha_2<1-H_2$. It follows from~\eqref{sec2-eq2.7} that
\begin{align*}
\Bigl|\frac{\mu_{\varepsilon,0}}{ \rho_{\varepsilon,0}} -\frac{\mu_{\varepsilon,\varepsilon}}{\rho_{\varepsilon,\varepsilon} }\Bigr|&=\frac{1}{\rho_{\varepsilon,0} \rho_{\varepsilon,\varepsilon}}\left|
\rho_{\varepsilon,\varepsilon}\mu_{\varepsilon,0}-\rho_{\varepsilon,0}
\mu_{\varepsilon,\varepsilon}\right|\\
&\leq \frac{1}{\rho_{\varepsilon,\varepsilon}}\left|
\mu_{\varepsilon,0}-\mu_{\varepsilon,\varepsilon}\right| +\frac{\mu_{\varepsilon,0}}{ \rho_{\varepsilon,0}\rho_{\varepsilon,\varepsilon}}\left|
\rho_{\varepsilon,\varepsilon}-\rho_{\varepsilon,0}\right|\\
&\leq C\varepsilon \left\{(r+\varepsilon)^{2H_1-1}\vee s^{2H_2-1}\right\}(r'\Lambda_\varepsilon(r,r'))^{-\alpha_1-H_1} (s'|s-s'|)^{-\alpha_2-H_2}
\end{align*}
for all $0<r<s<t$, $0<r'<s'<t$, $0<\alpha_1<1-H_1$ and $0<\alpha_2<1-H_2$.
\end{proof}

\begin{proof}[Proof of~\eqref{sec2-eq2.10} and~\eqref{sec2-eq2.11}]
We have
\begin{equation*}
\begin{split}
\left|\frac{\mu_{\varepsilon,0}}{ \rho^2_{\varepsilon,0}} -\frac{\mu_{\varepsilon,\varepsilon}}{\rho^2_{\varepsilon,\varepsilon} }\right|
&=\frac{1}{\rho^2_{\varepsilon,0} \rho^2_{\varepsilon,\varepsilon}}\left|
\rho^2_{\varepsilon,\varepsilon}\mu_{\varepsilon,0}-\rho^2_{\varepsilon,0}
\mu_{\varepsilon,\varepsilon}\right|\\
&\leq \frac{1}{\rho^2_{\varepsilon,\varepsilon}}\left|
\mu_{\varepsilon,0}-\mu_{\varepsilon,\varepsilon}\right| +\frac{\mu_{\varepsilon,0}}{ \rho^2_{\varepsilon,0}\rho^2_{\varepsilon,\varepsilon}}\left|
\rho^2_{\varepsilon,\varepsilon}-\rho^2_{\varepsilon,0}\right|\\
&\leq C\varepsilon \left\{(r+\varepsilon)^{2H_1-1}\vee s^{2H_2-1}\right\}(r'\Lambda_\varepsilon(r,r'))^{-\alpha_1-H_1} (s'|s-s'|)^{-\alpha_2-H_2}
\end{split}
\end{equation*}
for all $0<r<s<t$, $0<r'<s'<t$, $0<\alpha_1<1-H_1$ and $0<\alpha_2<1-H_2$. Similarly, for~\eqref{sec2-eq2.11} we have
\begin{align*}
\left|\frac{\lambda_{r',s'}}{ \rho^2_{\varepsilon,0}}
-\frac{\lambda_{r'+\varepsilon,s'}}{\rho^2_{\varepsilon,\varepsilon}} \right|&\leq \frac{1}{\rho^2_{\varepsilon,0}}\left|
\lambda_{r',s'}-\lambda_{r'+\varepsilon,s'}\right| +\frac{\lambda_{r'+\varepsilon,s'}}{ \rho^2_{\varepsilon,0}\rho^2_{\varepsilon,\varepsilon}}\left|
\rho^2_{\varepsilon,\varepsilon}-\rho^2_{\varepsilon,0}\right|\\
&\leq C\varepsilon (r'\Lambda_\varepsilon(r,r'))^{-\alpha_1-H_1} (s'|s-s'|)^{-\alpha_2-H_2}
\end{align*}
for all $0<r<s<t$, $0<r'<s'<t$, $0<\alpha_1<1-H_1$ and $0<\alpha_2<1-H_2$.
\end{proof}


\begin{proof}[Proof of Lemma~\ref{lem5.1}]
By making substitutions $u_{j}-u_{j+1}=r_j$, $j=1,2,\ldots,n-1$ and $u_n=r_n$, and then using the estimate
\begin{equation}\label{sec5-eq5.100012}
\int_0^1e^{-x^2u^{2H}}du\asymp \frac{1}{1+|x|^{1/H}},\quad x\in {\mathbb R},\;0<H<1,
\end{equation}
we have
\begin{equation}\label{sec5-eq5.400}
\begin{split}
\Lambda_1(0,t,n,\xi):&=\int_{\widetilde{{\mathbb D}}_{0,t}(u)}
\prod_{j=1}^{n-1}e^{-\frac12 \kappa(\sum\limits_{k=1}^j\xi'_k )^2(u_j-u_{j+1})^{2H_1}}\cdot e^{-\frac12\kappa (\sum\limits_{k=1}^n\xi'_k )^2(u_n)^{2H_1}}du_1\ldots du_n\\
&\leq \prod_{j=1}^{n}\int_0^te^{-\frac12 \kappa(\sum\limits_{k=1}^j\xi'_k )^2r_j^{2H_1}}dr_j\asymp \prod_{j=1}^{n}\bigl(1+|(\sum\limits_{k=1}^j\xi'_k )|^{1/H_1}\bigr)^{-1}
\end{split}
\end{equation}
for all $t\in [0,T]$. On the other hand, some elementary calculus can show that the following estimates hold:
\begin{align*}
\int_{t}^{t'}e^{-x^2u^{2H}}du&=\int_0^{t'-t}e^{-x^2(v+t)^{2H}}dv\leq \int_0^{t'-t}e^{-x^2v^{2H}}dv\\
&=(t'-t)\int_0^1e^{-x^2(t'-t)^{2H}v^{2H}}dv
\asymp \frac{t'-t}{1+(t'-t)|x|^{1/H}}
\end{align*}
with $H\in (0,1)$ and $x\in {\mathbb R}$. By making substitutions $v_{j}-v_{j+1}=s_j$, $j=1,2,\ldots,n-1$ and $v_n=s_n$ it follows that
\begin{equation}\label{sec5-eq5.400-1}
\begin{split}
\Lambda_2(t,t',n,\xi)&:=\int_{\widetilde{{\mathbb D}}_{t,t'}(v)}
\prod_{j=1}^{n-1}e^{-\frac{\kappa}2(\sum\limits_{k=1}^j\xi''_k )^2(v_j-v_{j+1})^{2H_2}} \cdot e^{-\frac{\kappa}2 (\sum\limits_{k=1}^n\xi''_k )^2(v_n)^{2H_2}} dv_j\\
&\leq \prod_{j=1}^n\int_t^{t'}
 e^{-\frac12 \kappa(\sum\limits_{k=1}^j\xi''_k )^2s_j^{2H_2}}ds_j\\
&\leq
C(t'-t)^n\prod\limits_{j=1}^{n}\Bigl(1+(t'-t) |\sum\limits_{k=1}^j\xi''_k |^{1/H_2}\Bigr)^{-1}
\end{split}
\end{equation}
for all $0<H_2<1$. Consequently, we get
\begin{equation}\label{sec5-eq5.400-3}
\begin{split}
\widetilde{\Lambda}(t,t',n,\gamma)&=\int_{\mathbb{R}^n} \Lambda_1(0,t,n,\xi)\Lambda_2(t,t',n,\xi) \prod_{j=1}^n|\xi_j|^{1+\gamma}d\xi_j\\
&\leq \left(\int_{\mathbb{R}^n}
[\Lambda_1(0,t,n,\xi)]^p\prod\limits_{j=1}^n|\xi_j|^{p(1+\gamma)\alpha}d\xi_j \right)^{1/p}\\
&\qquad\qquad\cdot\left(\int_{\mathbb{R}^n}[\Lambda_2(t,t',n,\xi)]^q
\prod\limits_{j=1}^n |\xi_j|^{q(1+\gamma)(1-\alpha)}d\xi_j\right)^{1/q}
\end{split}
\end{equation}
for all $\alpha\in (0,1)$ and $p,q>1,\frac1p+\frac1q=1$.

Finally, making substitutions $\sum\limits_{k=1}^{j}\xi'_k=x_j,\;j=1,2,\ldots,n$, we see that
\begin{align*}
\prod_{j=1}^n|\xi_j|=\prod_{j=1}^n|x_j-x_{j-1}|&\leq \prod_{j=1}^n\left(|x_j|+|x_{j-1}|\right)
\leq \prod_{j=1}^n\left(1+|x_j|\right)\left(1+|x_{j-1}|\right)\\
&\leq \prod_{j=1}^n\left(1+|x_j|\right)^2
\leq 2\prod_{j=1}^n\left(1+|x_j|^2\right)
\end{align*}
with $x_0=0$, and
\begin{equation}\label{sec5-eq5.400-4}
\begin{split}
\int_{\mathbb{R}^n}
[\Lambda_1(0,t,n,\xi)]^p&\prod\limits_{j=1}^n|\xi_j|^{p(1+\gamma)\alpha}d\xi_j
\leq C\int_{\mathbb{R}^n}\prod\limits_{j=1}^n
(1+|\sum\limits_{k=1}^{j}\xi'_k|^{1/H_1})^{-p}|\xi_j|^{p(1+\gamma)\alpha}d\xi_j\\
&\leq C\int_{\mathbb{R}^n}\prod\limits_{j=1}^n
(1+|x_j|^{1/H_1})^{-p}|x_j-x_{j-1}|^{p(1+\gamma)\alpha}dx_j\\
&\leq C\prod\limits_{j=1}^n\int_{\mathbb{R}}
(1+|x_j|^{1/H_1})^{-p}\left(1+|x_j|^2\right)^{p(1+\gamma)\alpha}dx_j
<\infty,
\end{split}
\end{equation}
provided
\begin{equation}\label{sec5-eq5.400-40000}
\frac{p}{H_1}-2p(1+\gamma)\alpha>1.
\end{equation}
Similarly, we have also
\begin{align*}
\int_{\mathbb{R}^n}[\Lambda_2(t,t',n,&\xi)]^q
\prod\limits_{j=1}^n |\xi_j|^{q(1+\gamma)(1-\alpha)}d\xi_j\\
&\leq C(t'-t)^{nq}
\int_{\mathbb{R}^n}\prod\limits_{j=1}^{n}\Bigl(1+(t'-t)|\sum\limits_{k=1}^j\xi''_k |^{1/H_2}\Bigr)^{-q}|\xi_j|^{q(1+\gamma)(1-\alpha)}d\xi_j\\
&\leq C(t'-t)^{nq}\int_{\mathbb{R}^n}\prod\limits_{j=1}^n
(1+(t'-t)|x_j|^{1/H_2})^{-q}|x_j-x_{j-1}|^{q(1+\gamma)(1-\alpha)}dx_j\\
&\leq C(t'-t)^{nq}\prod\limits_{j=1}^n\int_{\mathbb{R}}
(1+(t'-t)|x_j|^{1/H_2})^{-q}\left(1+|x_j|^2\right)^{q(1+\gamma)(1-\alpha)}dx_j\\
&\leq C (t'-t)^{nq-n[1+2q(1+\gamma)(1-\alpha)]H_2},
\end{align*}
provided
\begin{equation}\label{sec5-eq5.400-40001}
\frac{q}{H_2}-2q(1+\gamma)(1-\alpha)>1.
\end{equation}
It follows~\eqref{sec5-eq5.400-3} that
\begin{align*}
\widetilde{\Lambda}(t,t',n,\gamma) \leq C(t'-t)^{n\theta}
\end{align*}
with $\theta\leq 1-H_2\left(\frac1q+2(1+\gamma)(1-\alpha)\right)$, provided
\begin{equation}\label{sec5-eq5.400-4000100}
\begin{cases}
1-H_2\left(\frac1q+2(1+\gamma)(1-\alpha)\right)>0,&\\
\frac{q}{H_2}-2q(1+\gamma)(1-\alpha)>1,&\\
\frac{p}{H_1}-2p(1+\gamma)\alpha>1&.
\end{cases}
\end{equation}
Noting that~\eqref{sec5-eq5.400-4000100} is equivalent to
$$
\begin{cases}
2(1+\gamma)(1-\alpha)<\frac{1}{H_2}-\frac1q,&\\
2(1+\gamma)\alpha<\frac{1}{H_1}-\frac1p,&
\end{cases}
$$
we get $2\gamma<\frac{1}{H_1}+\frac{1}{H_2}-3$, and moreover by taking $p=\frac1{\alpha},q=\frac1{1-\alpha}$ we also have
\begin{align*}
1-H_2&\left(\frac1q+2(1+\gamma)(1-\alpha)\right)
=1-H_2(3+2\gamma)(1-\alpha)\\
&>1-H_2(1-\alpha)\left(\frac{1}{H_1}+\frac{1}{H_2}\right) =\frac1{H_1}\left(\alpha(H_1+H_2)-H_2\right)
\end{align*}
for all $\frac{H_2}{H_1+H_2}<\alpha<1$, which shows that
\begin{align*}
\widetilde{\Lambda}(t,t',n,\gamma) \leq C(t'-t)^{n\beta}
\end{align*}
with $\beta=\frac1{H_1}\left(\alpha(H_1+H_2)-H_2\right)$ and $\frac{H_2}{H_1+H_2}<\alpha<1$. This completes the proof.
\end{proof}



\end{document}